\newcommand{\mysection}{\setcounter{equation}{0} \section}
\definecolor{Dgreen}{rgb}{0.05, 0.5, 0.06}
\newcommand{\brho}{\boldsymbol{\rho}}
\newcommand{\R}{\mathbb{R}}
\newcommand{\I}{\mathbb{I}}
\newcommand{\ind}{\mathbf{1}}
\newcommand{\E}{\mathbb{E}}
\newcommand{\p}{\partial}
\newcommand \A[1]{{\bf (#1)}}
\newcommand{\dtv}{d_{{\rm{TV}}}}
\newcommand{\mW}{\mathcal W}
\newcommand{\bmu}{\boldsymbol \mu}
\newcommand{\mF}{\mathscr{C}}
\newcommand{\cv}{c_{\mathbf Y}} 
\newcommand{\ch}{c_{\mathbf{HK}}} 
\newcommand{\ccv}{c_{\mathbf{cv}}} 
\def\ind{{~\hbox{\rm l\kern-.4em\hbox{\rm l}}}~} 
\newcommand{\Lw}{\text{L}_{\text{w}}} 
\def\div{{\rm{div}}}
\newcounter{subhyp}
\let\savedc@hyp\c@hyp
\newcommand{\dotafter}[1]{#1.}
\titleformat{\section}[hang]
{\normalfont\large\bfseries}{\thesection.}{.5em}{\dotafter}[]
\titleformat{\subsection}[runin]
{\normalfont\bfseries}{\thesubsection.}{.4em}{}[.]
\titlespacing*{\subsection}{0pt}{3ex plus 1ex minus .2ex}{1em}
\titleformat{\paragraph}[runin]{\normalfont\bfseries}{\theparagraph.}{.4em}{}[.]
\theoremstyle{plain}
\newtheorem{thm}{Theorem}
\newtheorem{lemme}[thm]{Lemma}
\newtheorem{prop}[thm]{Proposition}
\theoremstyle{definition}
\theoremstyle{remark}
\newtheorem{rem}{Remark}
\renewcommand{\sc}{\mbox{\small{\begin{cursive}s\end{cursive}}}}
\newcommand{\cc}{\mbox{\small{\begin{cursive}c\end{cursive}}}}
\renewcommand{\|}{|}
\begin{document}
\title[]{Multidimensional stable driven McKean-Vlasov SDEs with distributional interaction kernel: a regularization by noise perspective}

\author{P.-E. Chaudru de Raynal}
\address{Laboratoire de Math\'ematiques Jean Leray, University of Nantes, 2, rue de la Houssini\`ere BP 92208
F-44322 Nantes Cedex 3, France.}
\email{pe.deraynal@univ-nantes.fr}
\author{J.-F. Jabir}
\address{Department of Statistics and Data Analysis $\&$ Laboratory of Stochastic Analysis, HSE University, Pokrovsky Blvd, 11, Moscow, Russian Federation.}
\email{jjabir@hse.ru}
\author{S. Menozzi}
\address{LaMME, Universit\'e d'Evry Val d'Essonne, Universit\'e Paris-Saclay, CNRS (UMR 8071), 23 Boulevard de France 91037 Evry, France $\&$ Laboratory of Stochastic Analysis, HSE University, Pokrovsky Blvd, 11, Moscow, Russian Federation.}
\email{stephane.menozzi@univ-evry.fr}
\date{\today}
\maketitle

\begin{abstract}
In this work, we are interested in establishing weak and strong well-posedness for McKean -Vlasov SDEs with additive stable noise and a convolution type non-linear drift with singular interaction kernel in the framework of Lebesgue-Besov spaces. \textcolor{black}{We prove that the well-posedness of the system holds for the thresholds (in terms of regularity indexes) deriving from the scaling of the noise and that the corresponding SDE can be understood in the \textit{classical} sense. Especially, we characterize quantitatively how the non-linearity allows to go beyond the stronger thresholds, coming from Bony's paraproduct rule, usually obtained for \textit{linear} SDEs with singular interaction kernels.} We also specifically characterize in function of the stability index of the driving noise and the parameters of the drift when the dichotomy between weak and strong uniqueness occurs. 

\end{abstract}

\keywords{{\small \textbf{Keywords:} McKean-Vlasov SDEs, singular interaction kernels, regularization by noise, stable processes}}\\
\keywords{{\small \textbf{AMS Subject classification (2020):} Primary: 60H10, 60H50; Secondary: 35K67, 35Q84.}}

\mysection{Introduction and main result}

\subsection{Framework}
Fix a \textcolor{black}{finite} time horizon $T>0$ and a starting time $t\in [0,T) $. Let $s\in (t,T]$ and  consider the, possibly \emph{formal}, McKean-Vlasov Stochastic Differential Equation (SDE)
\begin{equation}\label{main}
X_s^{t,\mu} = \xi + \int_t^s \int  b(v,X_v^{t,\mu}-y) \bmu_v^{t,\mu}(dy) dv + (\mW_s-\mW_t),\quad \xi \sim \mu \in \mathcal P(\R^d),
\end{equation}
with $\mathcal P(\R^d)$ the set of probability measures on $\R^d$, where the \textcolor{black}{non-degenerate} driving noise $(\mW_s)_{s\ge t} $ is a symmetric $\alpha $-stable process with $\alpha\in (1,2]$ (including thus the pure jump and the Brownian cases) and $(\bmu_{s}^{t,\mu})_{s\in [t,T]} $ denotes the 
{\color{black}{(time) marginal laws}} of $ (X_s^{t,\mu})_{s\in [t,T]}$. We aim at proving well-posedness (in a weak or strong sense) for the above equation for distributional interaction kernels $b$. {\color{black}Namely}, we want to consider a Lebesgue-Besov framework assuming that:
\begin{equation}\label{hyp_b}\tag{A}
b \in L^r((t,T],B_{p,q}^\beta(\R^d,\R^d))=:L^r(B_{p,q}^\beta),\qquad \beta \in (-1,0],\, p,q, r\in [1,+\infty].
\end{equation}
{\color{black}Briefly}, when $p = q = +\infty$, and
for any non integer $\beta > 0$, Besov spaces coincide with H\"older spaces {\color{black}with} $B_{\infty,\infty}^\beta(\R^d,\R^d) = C^\beta(\R^d,\R^d)$; when $ \beta \in (-1,0)$, 
{\color{black}they correspond} to a distribution space whose elements can be seen as the generalized derivatives of functions belonging to $B_{\infty,\infty}^{\beta+1}(\R^d,\R^d)=C^{\beta+1}(\R^d,\R^d)$. This also \textit{somehow} indicates that the H\"older modulus blows up at rate $ \beta$. 
{\color{black}More generally, t}he parameters $p$ and $q$ are related to the integrability of such a modulus. We refer to Section 2.6.4 of \cite{Triebel-83a} or Section \ref{SEC_BESOV} for a precise definition of Besov spaces.

{\color{black}\textcolor{black}{In} the setting \eqref{hyp_b}}, we are interested in deriving conditions relating the \emph{stable exponent} $\alpha$, the \emph{integrability indexes} $r,p,q$, the \emph{regularity index} $\beta$ and the dimension $d$ of the system in order to have well-posedness of the former equation in a weak or strong sense. In the following, the quantities $\alpha,r,p,q,\beta$ and $d$ are referred to as the \emph{parameters} (of \eqref{main}).

The question of the well-posedness for irregular or singular {\color{black}\emph{non-degenerate}}\footnote{We can refer to the recent work \cite{zhan:21} for McKean-Vlasov equations of kinetic type (\textcolor{black}{see also \cite{boss:font:jabi:jabi:13} or \cite{butt:fla:otto:zega:19})}} McKean-Vlasov SDEs of the above, or even more general, form is nowadays the subject of a vast and still increasing literature. Historically, McKean-Vlasov SDEs were introduced for the probabilistic interpretation of non-linear parabolic PDEs arising as the mean-field limit of interacting particle systems, \cite{McKean-66}, \cite{McKean-67}. At the time, H.~P. McKean addressed the specific cases of a Boltzmann type equation and the (viscous) Burgers equation, and since then -particularly in the 80s' and 90s'- McKean-Vlasov models have been applied to design and validate particle methods {\color{black}in a variety of situations} in statistical physics and in fluid dynamics.\\

McKean-Vlasov SDEs were notably investigated for the validation of particle approximation and simulation of vortex particle methods for the two and three dimensional incompressible Navier-Stokes equations,  
and more recently the Keller-Segel equations. 
For a more detailed overview on the links between McKean-Vlasov models, numerical particle methods and their application in statistical physics, population dynamics, social science and other engineering fields, we refer the interested reader to the surveys \cite{Bossy-05,JabinWang-17} and the recent monographs \cite{DelMoral-13,ChaDie-21}.

\textcolor{black}{In the last decade}, the theory \textcolor{black}{of} McKean-Vlasov SDEs has received a renew\textcolor{black}{ed} and significant interest with the emergence of mean field games and applications of \textcolor{black}{the} mean-field theory and McKean-Vlasov SDEs in stochastic control problems - see \cite{CarDel-18} and the references therein. This new framework, coupled with new advances on variational calculus on the space of probability measures (e.g. \cite{AmGiSa-08}, \cite{Kolokolstov-10}, \cite{Cardaliaguet-13} and \cite{CarDel-18}), \textcolor{black}{puts the focus on}  well-posedness  questions for McKean-Vlasov SDEs with coefficients \textcolor{black}{having} a general dependency in the measure argument, see e.g. \cite{chau:frik:19}. 

{\color{black}The present work} is the first of a series of two. We will \textcolor{black}{concentrate here} on the regularization by noise aspects associated with McKean-Vlasov SDEs of type \eqref{main} (i.e. for which the measure dependence arises from an interaction kernel).  \textcolor{black}{The} connections with specific models and their extension to the $\alpha $-stable setting will be thoroughly investigated in the companion paper \cite{chau:jabi:meno:22-2}.\\

\noindent\textbf{A regularization by noise perspective \textcolor{black}{for \textit{usual/Non McKean-Vlasov} SDEs}.} For \textit{standard} SDEs, i.e. when the dynamics 
{\color{black}write} (at least formally in the case of a distributional drift):
\begin{equation}\label{main_NON_MCKEAN}
Y_s^{t,\mu} = \xi + \int_t^s   b(v,Y_v^{t,\mu}) dv + (\mW_s-\mW_t),\quad \xi \sim \mu \in \mathcal P(\R^d),
\end{equation}
 and thus {\color{black}when} there is no spatial convolution between the drift and the law, a huge literature investigated as well the smoothing effects of the noise. The underlying idea is that the presence of noise allows to restore uniqueness in some sense \textcolor{black}{when} the corresponding \textcolor{black}{differential equation} without noise would be ill-posed.

Such phenomena have for instance been studied since the seminal works of Zvonkin \cite{Zvonkin-74} - who established, in the scalar \textcolor{black}{Brownian} case, strong well-posedness for \eqref{main_NON_MCKEAN} when the drift is only bounded and {\color{black}(Borel)} measurable -  and then of Stroock and Varadhan \textcolor{black}{\cite{StrookVaradhan-06}} - who extended the result from a weak perspective, in the multidimensional setting, allowing as well the noise to be multiplicative with a diffusion coefficient only measurable in time and space and continuous in \textcolor{black}{space}. The strong uniqueness result of Zvonkin was then extended to the multidimensional case by Veretennikov in \cite{Veretennikov-81}
 and later for $L^r((t,T],L^p(\R^d,\R^d))$ drifts satisfying a Serrin integrability condition \textcolor{black}{by Krylov and R\"ockner} in \cite{kryl:rock:05} (with recent investigation of the critical case {\color{black}in} \cite{kryl:21}, \cite{rock:zhao:21}). We can also refer to the monograph by Flandoli{\color{black},} \cite{Flandoli-10}{\color{black},} for a survey of related topics or to {\color{black}the work of Delarue and Flandoli} \cite{dela:flan:14} in which a vanishing viscosity restores a kind of uniqueness for a Peano type ODE {\color{black} by - r}oughly speaking - {\color{black}selecting} the maximal solutions of the ODE. 
 
 {\color{black}Let us further explain the results obtained in this latter work. Consider the following scalar perturbed Peano dynamics: for $\epsilon>0 $ \textcolor{black}{and (temporarily) $\beta\in (0,1) $},
\begin{equation}\label{PEANO}
dx^\epsilon_{\color{black}s}={\rm sgn}(x^\epsilon_{\color{black}s})|x^\epsilon_{\color{black}s}|^\beta{\color{black}\,ds}+\epsilon d\mW_{\color{black}s},\ x_0^\epsilon=0.
\end{equation}
It is clear that for $\epsilon=0 $ the maximal solutions of \eqref{PEANO} write: $x_{{\color{black}s}}^0=\underline{+}c_\beta {\color{black}s}^{1/(1-\beta)} $. Heuristically, the authors proved that considering $\epsilon>0$ allows to restore uniqueness as far as the noise dominates the maximal solution in short time. In terms of scales, recalling that the typical scale of 
$\mW_{\color{black}s} $ is ${\color{black}s}^{1/\alpha}$, this writes $s^{1/\alpha} > s^{1/(1-\beta)}$ which for $s<1$ equivalently gives the inequality  $1/\alpha <\textcolor{black}{1/ (1-\beta)}$, and therefore the condition 
\begin{equation}\label{T1}\tag{T${}_1$}
\beta>1-\alpha.
\end{equation}
This phenomenon not only gives a heuristic rule for the  thresholds on the regularity indexes that guarantees restoration of uniqueness, but also allows to build counter-examples \textcolor{black}{to weak well-posedness}. \textcolor{black}{To this end we can mention the famous counter-example from Tanaka \textit{et al.}  \cite{tana:tsuc:wata:74} who derived that, for the previous Peano type drift $x\mapsto {\rm sgn}(x)|x|^\beta $, in the scalar case, uniqueness fails starting from 0 for $\beta<1-\alpha$ (see Theorem 3.2 therein).}
We also refer to e.g. \cite{chau:16,Raynal2017RegularizationEO} and \cite{mar:meno:21} for further extensions to (respectively) Brownian ($\alpha=2$) and pure jump ($\alpha \in (1,2)$) degenerate noises. On the contrary well-posedness of \eqref{PEANO} under the condition \eqref{T1} was precisely proved in \textcolor{black}{a particular} Brownian scalar setting \textcolor{black}{by Gradinaru and Offret} in \cite{grad:offr:13}.

From the result of Tanaka \textit{et al.}  \cite{tana:tsuc:wata:74}, it is thus clear that in the \textit{supercritical} regime, i.e. $\alpha< 1 $, only drifts with positive regularity indexes ``$\beta$'' can be addressed. \textcolor{black}{In the present work, we mainly focus on distributional drifts which consequently restricts the admissible stability range to $\alpha \in (1,2]$}. For results in the \textit{supercritical} regime in this \emph{linear} we can refer to \cite{chen:zhan:zhao:21} or to \cite{silv:12}, \cite{chau:meno:prio:19} for a related Schauder theory, which quantify the regularization of the noise. Eventually, \textcolor{black}{we refer to \cite{tana:tsuc:wata:74}, \cite{koma:84} and \cite{chau:meno:prio:critic} for results in the critical case}.\\

When $\alpha \in (1,2]$, \textcolor{black}{and} in the case where $b$ in  \eqref{main_NON_MCKEAN} \textcolor{black}{is} a distribution, i.e. $\beta<0 $ in \eqref{hyp_b},  systems of the form of \eqref{main_NON_MCKEAN} ha\textcolor{black}{ve} been investigated} with a growing interest. From the initial work of \textcolor{black}{Bass and Chen} \cite{bass:chen:01}, for a scalar SDE and a time homogeneous drift, which was investigated through a Dirichlet form approach, there have been various works addressing the well-posedness of \eqref{main_NON_MCKEAN} for time-dependent drifts\footnote{in the time-independent case the problem can still be investigated through Dirichlet forms{\color{black},} \cite{flan:russ:wolf:04}} through PDE {\color{black}analysis} (\cite{flan:isso:russ:17}, \cite{zhan:zhao:17} for local generators, \cite{athr:butk:mytn:18}, \cite{CdRM-20} for the strictly stable case) or rough-path techniques (\cite{dela:diel:16}, \cite{krem:perk:20} for a Markovian setting or \cite{catellier_averaging_2016} for a purely pathwise approach which also allows to address fractional dynamics). At this point, one has to notice that the SDE \eqref{main_NON_MCKEAN} is only \textit{formal} for distributional drifts. The \textcolor{black}{associated} \textit{weak} well-posedness is investigated through the martingale problem formulation, which possibly needs to be tailored, but in any case requires in the Markovian setting to investigate the well-posedness of the underlying associated PDE. Namely, 
\begin{equation}\label{PDE_FIRST_CAR}
\left\{\begin{array}{l}
\partial_t u+b\cdot {D} u+L^\alpha u
=-f\, \text{ on } [0,T)\times \R^d,\\
u(T,\cdot)=0,
\end{array}\right.
\end{equation}
where $L^\alpha$ is the generator of the driving stable process $\mathcal W ${\color{black}, ${D}$ a generalized gradient operator,} and $f$ belongs to a  rich enough class of functions in order to characterize the law of the canonical process associated with the martingale problem solution. Equation \eqref{PDE_FIRST_CAR} is to be solved in a \textit{mild} sense through a Duhamel type formulation. Namely we \textit{formally} rewrite {\color{black}it as}:
\begin{equation}\label{MILD_LIN}
\forall (t,x)\in [0,T]\times \R^d,\quad u(t,x) =  \int_t^T ds P^\alpha_{s-t}[\{f + b\cdot {D}u\}](s,x),
\end{equation}
where $(P_t^\alpha)_{t\ge 0}$ is the semigroup generated by $L^\alpha$.

The point is that this (mild) formulation already emphasizes the difficulties and allows to understand the thresholds on the parameters arising for well-posedness in the \textcolor{black}{aforementioned} articles. 
Indeed, not only is the representation \eqref{MILD_LIN} implicit (in the sense that $u$ appears on both sides of the equality) but for a distributional drift, it is delicate to give a meaning to the term $P^\alpha_{s-t}\textcolor{black}{[ b\cdot Du]}$. This can somehow be done from the Bony paraproduct rule. Namely $b\cdot Du$ can be well-defined, as a distribution, provided the sum of the regularities of each term is positive. Assume now {\color{black}and} for a while that $b $ is time independent and belongs to $ B_{\infty,\infty}^{\beta}(\R^d,\R^d) $, $\beta\in (-1,0) $ (or 
that $b=DB $ with $B\in B_{\infty,\infty}^{\beta+1}(\R^d,\R)$). Suppose as well that the former  product makes sense as a distribution. It can \textcolor{black}{then} only be expected to lie in the same space than $b$, i.e. in $ B_{\infty,\infty}^{\beta}(\R^d,\R^d) $. Thus, for any (smoother) \textcolor{black}{class} of functions \textcolor{black}{to} which $f$ belong\textcolor{black}{s} to, the above mild representation suggests, from a \textcolor{black}{\textit{heuristic}} parabolic bootstrap \textcolor{black}{argument, that the best one can expect is} that $u\in C^{\beta+\alpha}(\R^d,\R) $ with $\beta+\alpha>1 $. 
Consequently, $Du  $ is expected to belong to $C^{\beta+\alpha-1}(\R^d,\textcolor{black}{\R^d}) $ with $ \beta+\alpha-1>0$. From the 
{\color{black} Bony paraproduct rule}, for the product {\color{black} $b\cdot Du$} to be well defined, one should have:
\begin{equation}\label{TRHESHOLD_LIN}\tag{T${}_2$}
\beta+(\beta+\alpha-1)>0\iff \beta>\frac{1-\alpha}{2}.
\end{equation}
This is precisely the threshold that appears in \cite{flan:isso:russ:17}, \cite{athr:butk:mytn:18}, \cite{CdRM-20} in the corresponding setting. \textcolor{black}{This is somehow the price to pay, with respect to the less stringent threshold in \eqref{T1}, to derive weak well-posedness in the singular case}.
Let us specify that in \cite{dela:diel:16}, \cite{krem:perk:20} the authors manage to go below this threshold
adding some structure to the drift, i.e. assuming that this latter can be enhanced into a rough path structure. In that case the threshold can decrease to $(2-2\alpha)/3 $.

Even when having at hand the martingale solution, specifying 
rigorous integral dynamics - for an adequate notion of solution - for the formal SDE \eqref{main_NON_MCKEAN} is not easy when the drift is \textcolor{black}{a} distribution. Under the previously described conditions, it turns out that the integrated drift has to be understood as a Dirichlet process and it seems that the most accurate description of the singular dynamics, which reconstructs the drift as a Young integral with respect to a time-space convolution of the drift and the density of the driving noise, is provided in \cite{dela:diel:16} \textcolor{black}{in the Brownian case} and \cite{CdRM-20} for the stable Young regime.\\
 
\noindent\textbf{Regularization by noise for McKean-Vlasov SDE.} In its simplest setting, notably for Brownian driven diffusions, the (strong) well-posedness of McKean-Vlasov SDEs is fulfilled whenever the coefficients are Lipschitz continuous in $\mathbb R^d\times\mathcal P_\ell(\mathbb R^d)$, $\ell \in [1,+\infty)$ 
{\color{black}and} $\mathcal P_\ell(\mathbb R^d)$ standing for the set of probability measures which \textcolor{black}{admit} a moment of order $\ell$ and 
{\color{black}is} equipped with the topology induced by the Wasserstein metric:
\[
W_\ell(\mu,\nu)=\Big(\inf_{X\sim\mu,Y\sim\nu} \E[|X-Y|^\ell]\Big)^{1/\ell}.
\]
The distance $W_\ell$ appears to be quite natural when dealing with McKean-Vlasov equations with Lipschitz properties, especially because it can be expressed in term of the  corresponding  $L^\ell$ distance. Indeed, on the one hand, this relation allows to implement the Picard-Lindelh\"of fixed point procedure to establish well-posedness results, in a rather similar way than for classical SDEs. On the other hand, it allows to ensure the well-posedness of the associated Mean-Field particle system, viewed as a  high dimensional SDE. We refer to \emph{e.g.} \cite{McKean-67}, \cite{szni:88}, \cite{Meleard-95} and \cite{CarDel-18}, Section 5.7.4, for further details.

It nevertheless appears that, in the \textit{non-degenerate} case, the usual Lipschitz property w.r.t. the Wasserstein metric can be \textcolor{black}{\textit{weakened}} to a Lipschitz property w.r.t. the total variation distance defined, for any $\mu,\nu \in \mathcal P(\R^d)$,  \textcolor{black}{as} $\dtv(\mu,\nu) = \sup_{A \in \mathcal B(\R^d)} |\mu(A)-\nu(A)|$. This goes back to the work of Shiga and Tanaka \cite{ShiTa85} for a particular type of measure dependence and then to Jourdain \cite{Jourdain-97} in a rather general framework - in both works, the diffusion matrix is not allowed to be measure dependent. \textcolor{black}{We used above the word ``\textit{weakened}''} to refer to the fact that, for any probability measures $\mu,\nu$ with full support on some compact subset $\mathcal K$  of $\R^d$ one has, \emph{e.g.} $W_\ell(\mu,\nu) \le {\rm diam}(\mathcal K) \dtv^{1/\ell}(\mu,\nu)$, and a large class of solutions of non-degenerate SDEs lives with high probability in compact subset\textcolor{black}{s} of the considered space. Hence, the \emph{non-degeneracy} of the noise allows to consider this \emph{stronger} topology. This particular smoothing effect of the finite dimensional noise w.r.t. the infinite dimensional measure variable is highlighted in e.g. Section 2 in \cite{chau:frik:19}. Using this quite tricky regularization phenomenon, the authors succeeded therein to derive a rather general well-posedness  theory for Brownian driven McKean-Vlasov SDEs whose drift and diffusion coefficients may be Lipschitz w.r.t. \emph{stronger} topologies than the usual Wasserstein metric. {\color{black}In this regard, we also refer the reader to \cite{CdR-20} where the first author extended Zvonkin's technique to McKean-Vlasov SDEs. To conclude this (partial) survey of the literature, let us finally point out the alternative methodology, highlighted in Mishura and Veretennikov \cite{MisVer-20}, to quantify pathwise $d_{TV}$ distance through a change of probability measures.  While initially \textcolor{black}{designed} to handle weak uniqueness results for Borel measurable kernels with linear growth in the space argument, this methodology has been extended to different situations, \textcolor{black}{from non smooth drifts in both the spatial and measure arguments} (\cite{Lacker-18}) to measure-dependent diffusion coefficients (\cite{RocZha-21}). The use of change of probability measures remain\textcolor{black}{s} nonetheless \textcolor{black}{quite specific} to Brownian driven equations and \textcolor{black}{does not transpose to the general pure jump stable setting}.}
 
 While often limited to the Brownian case, well-posedness results for non-degenerate stable (and pure jumps) driven McKean-Vlasov SDEs were addressed in \cite{frik:kona:meno:20}  (see also \cite{JouMelWoy-08} for more general jumps processes with suitable moments {\color{black}and \cite{Meleard-95} - and subsequent references - for the more specific case of \textcolor{black}{the} Boltzmann equation}). In \textcolor{black}{\cite{frik:kona:meno:20}, the authors} show that one can again consider stronger topologies for the regularity of the coefficients w.r.t. the measure variable and still preserve the well-posedness of the system. Let us mention as well that the approach adopted \textcolor{black}{therein} allows to reach the super-critical case ($\alpha <1$).


We now briefly specify part of the results obtained in \cite{chau:frik:19} and \cite{frik:kona:meno:20}. Therein, by stronger topologies, the authors roughly mean that the coefficients are H\"older (or even bounded measurable) in space and Lipschitz continuous w.r.t. the chosen distance on the space of probability measures. For two given probability measures, this latter is defined as the supremum of their difference integrated against normalized functions whose regularity is linked to the spatial one of the coefficients.
Namely, measurable and bounded (for the drift coefficient in \cite{chau:frik:19}) and H\"older continuous for the others. Again, the regularity assumed on the coefficients imposes, in turn, the class of test functions against which probability measures have to be tested when investigating the well-posedness of the system. 
From the fixed point perspective of those works, this homogeneity (between the metrics) seems rather natural. Indeed, the underlying linear  problems precisely enjoy some stability on the indicated function space (Schauder type estimates).\\

\noindent\textbf{McKean-Vlasov SDE with distributional interaction kernel.}  One of the main objective in the current work is to take full advantage of the non-degeneracy of the noise and of the particular structure of the measure dependence (allowing, in some sense, to regularize the drift through the convolution with the law of the solution - provided the law itself is smooth enough) in order to obtain a well-posedness result for a rather large class of interaction kernel\textcolor{black}{s}, possibly larger than the one considered in the 
{\color{black}{\it classical measure-independent}} case \textcolor{black}{described above}. As it will be heuristically discussed below, it seems to us that such a class is, almost, the largest that could be considered in the current setting.

In order to \textcolor{black}{emphasize}
some of the particularities of the model, let us \textcolor{black}{again} consider for a while that $\beta \in (-1,0]$ and that the interaction kernel is time-homogeneous, \textcolor{black}{i.e.} $b\in B_{\infty,\infty}^{\beta}(\R^d,\R^d)$.\\

\noindent\textit{Regularity with respect to the measure argument and associated metric.} Let us first \textit{formally} define the measure indexed drift 
$$\mathcal B_\nu(x) := \int b(x-y)\nu(dy),\quad \nu\in \mathcal P(\R^d),$$
associated with the non-linear drift of equation \eqref{main}.

Let us point out that this definition is formal in the sense that, since $b$ i{\color{black}s} singular, it is intuitively clear that some additional properties are needed on the probability measure $\nu$ for the quantity to be well defined. A rather natural way to define an appropriate setting is to proceed through duality. 

If we restrict to probability measures with density, i.e. $\nu(dy)=\nu(y) dy$, the integral  $\int {\color{black}dy}\ b(x-y)\nu(y)$
makes sense as soon as $\nu\in B_{1,1}^{\textcolor{black}{-\beta}}(\R^d,\R)$ from the inequality:
\begin{equation}\label{DEF_DRIFT}
|\mathcal B_\nu(x)| = |\int {\color{black}dy}\ b(x-y)\nu(y)
| \le |b|_{B^{\beta}_{\infty,\infty}(\R^d,\R^d)} |\nu|_{B^{-\beta}_{1,1}(\R^d,\R)},
\end{equation} 
see e.g. \cite{Lemarie-02} for a thorough presentation of duality results between Besov spaces. Namely, the \textit{regularity} of the density of the measure is needed to compensate the singularity of the kernel $b$ to define the corresponding drift $\mathcal B_\nu $. As a consequence, this latter is \textcolor{black}{then} defined \textit{pointwise}.

The previous considerations naturally induce a metric on the probability measures whose densities belong to $B^{-\beta}_{1,1}(\R^d,\R)$ for which 
the corresponding measure indexed drift is Lipschitz (in its measure argument). Namely, for all $x\in \R^d $,
\begin{equation}\label{LIP_METRIC}
|\mathcal B_\nu(x) - \mathcal B_{\nu'}(x)| = |\int {\color{black}dy} \ b(x-y)(\nu-\nu')(y)
| \le |b|_{B^{\beta}_{\infty,\infty}(\R^d,\R^d)} |\nu-\nu'|_{B^{-\beta}_{1,1}(\R^d,\R)}.
\end{equation}
In view of the previous discussion on the specific features of the regularization by noise for McKean-Vlasov SDEs, this thus suggests to look for (weak) solutions whose 
{\color{black}{marginal laws}} live for almost every time in the space $B^{-\beta}_{1,1}(\R^d,\R)$.  

\noindent\textit{Attainable thresholds (in term of irregularity of the interaction kernel).} Here, instead of considering the backward Kolmogorov PDE associated with \eqref{main_NON_MCKEAN}, we rather investigate the (forward) non-linear Fokker-Planck equation. We are thus led to consider mild solution\textcolor{black}{s} of this latter of the form
\begin{equation}\label{MILD_NONLIN}
\forall (s,y)\in [t,T]\times \R^d,\quad \rho(s,y) =  p_{s-t}^\alpha \star \mu(y) + \int_t^s dv P^\alpha_{s-v}[\{ 
\div[(\mathcal B_\rho) \cdot \rho]\}](v,y),
\end{equation}
where {\color{black}$\rho(s,\cdot)$ denotes the density function of $\bmu_s^{t,\mu}$, $\star$ the convolution product along the space variable, }$p^\alpha $ stands for the density of the driving process in \eqref{main} and, again, $P^\alpha $ for the corresponding semi-group.
In comparison with \eqref{MILD_LIN}, we do not need to define the product $b \cdot Du$ anymore, but only $\mathcal B_\rho \cdot \rho$ and prove somehow that the divergence of this term will be mapped onto a suitable function space by the stable semi-group $P^\alpha$ - precisely, the space where $\rho$ is supposed to belong to, for almost every time. According to the previous discussion, this space should be $B^{-\beta}_{1,1}(\R^d,\R)$.  Let us now present some formal calculations, proved below, to understand 
 which thresholds \textcolor{black}{are likely to be} attainable. Recall that the space $B^{-\beta}_{1,1}(\R^d,\R)$ can be described as the set of integrable maps having an integrable H\"older modulus. It will be established in Lemma \ref{lem_unifesti_gencase_2} below, through integration by parts together with Young like convolution inequalities in Besov spaces,  that
 \begin{equation}
 \label{CTR_HEURI_2}
 |P_{\textcolor{black}{s-v}}^\alpha \{\div(\mathcal B_\rho(\textcolor{black}{v},\cdot) \cdot \rho(\textcolor{black}{v},\cdot))\}|_{B_{1,1}^{-\beta}}  \le \|\mathcal B_\rho(\textcolor{black}{v},\cdot)\|_{L^\infty} |\rho(\textcolor{black}{v},\cdot)|_{L^1} \|\nabla p_\alpha(s-\textcolor{black}{v},\cdot)\|_{B_{1,1}^{-\beta}}  \le  \frac{C|b|_{B_{\infty,\infty}^\beta}|\rho(v,\cdot)|_{B_{1,1}^{-\beta}}}{(\textcolor{black}{s-v})^{\frac 1\alpha-\frac \beta \alpha}},
 \end{equation}
using \textcolor{black}{\eqref{DEF_DRIFT}} and the fact that we are considering probability measures to derive the last inequality.  Thus, \textcolor{black}{provided we can \textit{somehow} control the $B_{1,1}^{-\beta} $-norm of the solution in time}, we can hope to derive well-posedness \textcolor{black}{of \eqref{MILD_NONLIN}} for 
 \begin{equation}\label{THRESHOLD_MCKEAN}
 \beta>1-\alpha,
\end{equation}
 which \textcolor{black}{gives an integrable time singularity in \eqref{CTR_HEURI_2}} and clearly improves the threshold \eqref{TRHESHOLD_LIN} for the \textcolor{black}{singular} linear case \textcolor{black}{($\beta<0 $). It actually exactly corresponds to the condition \eqref{T1} which is sufficient to guarantee weak well-posedness in the non singular ($\beta \in (0,1)$) pure jump case}. \textcolor{black}{Observe for instance that} in the Brownian setting $\alpha=2 $, \eqref{TRHESHOLD_LIN} gives $\beta>-1/2 $ whereas the above condition reads $\beta>-1 $. This mainly comes from the specific dependence structure of the non-linear drift w.r.t. the law argument. Indeed, in order to define the product $\mathcal B_\rho \cdot \rho$, the interaction kernel in the non-linear drift has already been regularized through the convolution operator and has been mapped, as suggested in \textcolor{black}{\eqref{DEF_DRIFT}}, to $L^\infty(\R^d,\R^d)$ for almost every time. While rather naive, this observation is crucial in our analysis and allows to bypass the thresholds obtained in the linear case. In fact, even in the non-linear case, it is not clear that general dependence w.r.t. the law argument would allow to bypass such thresholds.  Let us also mention that working with probability 
 {\color{black}densities} plays a crucial role as well here. Indeed, the control in $L^1(\R^d,\R)$ norm of $\rho(\textcolor{black}{r},\cdot)$ in \eqref{CTR_HEURI_2} allows to ``remove'' the non-linearity and thus the quadratic dependence in $\rho$ \textcolor{black}{in the} right hand side of \eqref{MILD_NONLIN}. \textcolor{black}{This specific feature will yield well-posedness, for the indicated non-linear threshold, for any positive time $T>0$}. Such phenomenon is referred to as \emph{dequadrification} in the following.\\ 
 
 \begin{rem}\,
 \textit{On the associated metric and space in which the solution belongs to}. For whom are familiar with the smoothing effect of \emph{linear} SDEs (as briefly described above), it would seem reasonable, in view of \eqref{MILD_NONLIN}, to look for the law of the process in the space of probability measures whose regularity is precisely given according to the usual parabolic bootstrap associated with the non-degenerate noise. Namely, as the interaction kernel $b$ belongs to $B_{\infty,\infty}^{\beta}(\R^d,\R^d)$, the corresponding space for probability measures would be $B_{1,1}^{\beta + \alpha}(\R^d,\R)$, similarly to the \emph{linear} case. Note that, in the singular setting, the regularity associated with the parabolic bootstrap, i.e. $\beta+\alpha $, is greater or equal than the minimal regularity, i.e. $-\beta $, needed to define properly the drift (see \eqref{DEF_DRIFT}). Namely,
 $$(\beta + \alpha)-(-\beta) = 2\beta + \alpha\underset{\textcolor{black}{\eqref{THRESHOLD_MCKEAN}}}{ >}2-2\alpha+\alpha = 2-\alpha,$$
meaning that, except in the Brownian case ($\alpha=2$), the regularity of the weak solution should be better than the one obtained in this work. However, we were not able to prove such a result for any positive time $T>0$, but only in small time. We feel that this \textcolor{black}{restriction} comes from the particular smoothing effect of the McKean-Vlasov SDEs previously described and the dequadrification approach, which allows to handle any initial measure but not to benefit from a potential smoothness of this latter to iterate the analysis 
 {\color{black}(this feature will be one of the cores of \cite{chau:jabi:meno:22-2} as briefly mentioned below).}
\end{rem}

The main difficulty of course is to make the above arguments rigorous. This is what we will actually do considering the Fokker-Planck equation associated with the McKean-Vlasov SDE \eqref{main}. We will not proceed through a fixed point procedure but {\color{black} prove that, starting from a mollified version of the drift coefficient,  we can actually control the quantities we highlighted before, uniformly in the mollification parameter  \textcolor{black}{under a suitable condition relating the parameters, Assumption \eqref{cond_gencase} below, which is consistent with the previous discussion and enlarges the framework}. The well-posedness of \eqref{main} is then drawn from a stability argument.}

We conclude this introduction emphasizing that for physics related models, e.g. the Burgers equation \textcolor{black}{considered from the origin of McKean-Vlasov SDEs, one should expect that the case $\beta=-1,\alpha=2 $ could be handled}. This is not the case here because we wanted to focus on a generic approach working viewless of the regularity of the initial law. The \textit{critical cases} (which saturate the previous inequalities for the thresholds) will be \textcolor{black}{addressed} through different tools, somehow more connected with \textit{usual} arguments in non-linear analysis, in the companion paper \cite{chau:jabi:meno:22-2}. The strategy therein will allow to take advantage of a smooth\textcolor{black}{er} initial condition or some specific structural conditions on the drift (e.g. free divergence for fluid related problems) to go beyond the discussed thresholds.\\


\textbf{Organization of the paper.} Our main results are stated in the next section{\color{black}, and their  \textcolor{black}{proofs} are developed in Section \ref{SEC_PROOF_WEAK}}. 
{\color{black}Our} strategy, presented in Section \ref{sec_strategy}, consists first in  establishing the existence of a solution to \eqref{main} in terms of a non-linear martingale problem, through a mollification of the drift coefficient and 
{\color{black}stability} arguments. This approach allows to derive the {\color{black}weak} well-posedness of a solution to \eqref{main} directly from the construction of its 
{\color{black}{marginal laws} seen} as solution to the non-linear Fokker-Planck equation \textcolor{black}{\eqref{MILD_NONLIN}} related to \eqref{main} {\color{black}and a weak uniqueness criterion {\it \`a la} Krylov-R\"ockner}. {\color{black}Strong well-posedness is next derived from a classical pathwise uniqueness result, see Krylov-R\"ockner \cite{kryl:rock:05} for the Brownian case and Xie-Zhang \cite{xie:zhan:20} for the pure jump case.}
 
We present in Section \ref{SEC_BESOV} the useful properties of Besov spaces that will be used in our analysis.

The framework of Besov interaction kernel\textcolor{black}{s} allows to revisit the martingale approach to well-posedness for McKean-Vlasov models in a \textit{quite} systematic way starting from some (seemingly) sharp global density estimate\textcolor{black}{s}, valid for any initial law $\mu$. Section \ref{sec_nl_FK} is dedicated to this step, and Section \ref{sec_WP_SDE} to the derivation of the well-posedness results (in a weak and/or strong sense). Some technical results are presented in the Appendix. 

\subsection{Main results} From now on, in the \textcolor{black}{pure jump} case $\alpha\in(1,2)$, \textcolor{black}{we assume that the driving process $\mW$ in \eqref{main} satisfies the following condition}:

\noindent\textbf{Assumption \A{UE}.} The L\'evy measure $\nu$ of $\mW$ is given by the decomposition $\nu(dz)=w(d\xi)/\rho^{1+\alpha}{\color{black}\I_{\{\rho\textcolor{black}{>} 0\}}d\rho}$ where $w$ is a symmetric measure on the unit sphere $\mathbb S^{d-1}$ which satisfies the uniform non-degeneracy condition:
$$
\kappa^{-1}|\lambda|^\alpha\leq \int_{\mathbb S^{d-1}} |\xi\cdot\,\lambda|^\alpha\, w(d\xi)\leq\kappa|\lambda|^\alpha,\,\text{for all }\lambda\in\R^d\,,
$$
for some $\kappa\ge 1$. In particular, since we here consider an additive noise, its L\'evy measure can actually have a singular spherical part (cylindrical processes are allowed).

\begin{thm}\label{THM_GEN}
Fix $T>0$ and $t\in [0,T)$. Let $b$ in \eqref{main} belong to $L^r((t,T{\color{black}]},B_{p,q}^\beta(\R^d,\R^d)) $ 
and let the parameters satisfy the condition
\begin{equation}\label{cond_gencase}\tag{{\bf C0}}
\quad \beta >  1 -\alpha+ \frac dp +\frac \alpha r.
\end{equation}

Then, for any initial law $\mu \in \mathcal P(\R^d)$, the McKean-Vlasov SDE \eqref{main} admits  a weak solution s.t. its \textcolor{black}{marginal} law\textcolor{black}{s} $(\bmu_r^{t,\mu})_{r\in (0,T]} $ \textcolor{black}{have} a density \textcolor{black}{for almost any time}, i.e. for almost all $r\in (t,T], \  \bmu_r^{t,\mu}(dy)=\brho_{t,\mu}(r,y)dy$, which satisfies:
\begin{equation*}\label{INT_BRUT}
\int_t^T ds | \brho_{t,\mu}(s,\cdot)|_{B^{-\beta}_{p',q'}}^{\bar r}<+\infty,
\end{equation*}
for any $\bar r  \in \big[r',(-\beta/\alpha+d/(\alpha p))^{-1}\big)$ where $p^{-1}+(p')^{-1}=q^{-1}+(q')^{-1}=r^{-1}+(r')^{-1}=1 $. Moreover, the solution is unique among all the weak solutions that satisfy the above properties.\\

The above well-posedness \textcolor{black}{result} moreover holds in a strong sense if \textcolor{black}{the driving process $ $ is rotationally invariant} and the parameters satisfy the following more stringent (when $\alpha \neq 2$) condition:
\begin{equation}\label{COS}\tag{${\bf C0_S}$}
\beta >2-\frac 32\alpha+\frac dp +\frac \alpha r.
\end{equation}  
\end{thm}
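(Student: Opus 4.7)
\textbf{Plan for the proof of Theorem \ref{THM_GEN}.}

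The plan is to work directly on the non-linear Fokker-Planck equation \eqref{MILD_NONLIN}, bypassing a contraction/fixed-point scheme in favor of compactness and a stability argument, exactly as announced in the introduction. First, mollify the kernel: pick $b^n \in C_b^\infty$ with $b^n \to b$ in $L^r((t,T],B_{p,q}^\beta)$ (truncated along time if $r=\infty$). The mollified McKean-Vlasov SDE is classically well-posed, produces a smooth marginal density $\rho^n(s,\cdot) = p^\alpha_{s-t}\star\mu + \int_t^s P^\alpha_{s-v}\bigl[\mathrm{div}(\mathcal B^n_{\rho^n}(v,\cdot)\,\rho^n(v,\cdot))\bigr]\,dv$ where $\mathcal B^n_\nu(x) = \int b^n(\cdot,x-y)\nu(dy)$, and our task is to obtain estimates that do not depend on $n$.

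The core of the argument is a uniform control of $\rho^n$ in an $L^{\bar r}_t B^{-\beta}_{p',q'}$ Bochner norm, extending the heuristic \eqref{CTR_HEURI_2} to the full Lebesgue-Besov setting. Two ingredients are combined. First, by the duality $B_{p,q}^\beta$-$B_{p',q'}^{-\beta}$, for almost every $v$,
\begin{equation*}
\|\mathcal B^n_{\rho^n}(v,\cdot)\|_{L^\infty} \;\le\; \|b^n(v,\cdot)\|_{B_{p,q}^\beta}\,|\rho^n(v,\cdot)|_{B_{p',q'}^{-\beta}},
\end{equation*}
and second, since $\rho^n(v,\cdot)$ is a probability density, $|\rho^n(v,\cdot)|_{L^1} = 1$; this is the \emph{dequadrification} that turns the formally quadratic feedback in \eqref{MILD_NONLIN} into a linear one. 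Applying the Young-type inequality for the stable semi-group in Besov spaces (cf.\ Section~\ref{SEC_BESOV}) to the mild formulation and integrating in time via Hölder with exponents dictated by $r,p$, one arrives at a Volterra inequality of the schematic form
\begin{equation*}
|\rho^n(s,\cdot)|_{B_{p',q'}^{-\beta}} \;\le\; C\,|p_{s-t}^\alpha\star\mu|_{B_{p',q'}^{-\beta}} + C\int_t^s \frac{\|b^n(v,\cdot)\|_{B_{p,q}^\beta}\,|\rho^n(v,\cdot)|_{B_{p',q'}^{-\beta}}}{(s-v)^{(1-\beta+d/p)/\alpha}}\,dv.
\end{equation*}
Exactly the condition \eqref{cond_gencase}, $\beta > 1-\alpha+d/p+\alpha/r$, makes the time-singularity integrable on the Hölder-conjugate scale $\bar r \in [r',(-\beta/\alpha + d/(\alpha p))^{-1})$, so a generalized Gronwall lemma of Volterra type produces an $n$-uniform bound on $\int_t^T |\rho^n(s,\cdot)|_{B_{p',q'}^{-\beta}}^{\bar r} ds$ depending only on $\|b\|_{L^r B_{p,q}^\beta}$ and $T$. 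The treatment of the first (initial) term requires slightly more care since $\mu$ is only a probability measure; this is absorbed by the stable smoothing $p_{s-t}^\alpha\star$ together with the fact that $\bar r \ge r'$. This uniform estimate is the technical heart of the proof and the step we expect to be the main obstacle, because the exponents must be tracked exactly to saturate \eqref{cond_gencase}.

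Weak existence follows by a stability/compactness argument: the uniform bound yields tightness of the laws of the associated processes (classical tightness in Skorokhod space via standard moment estimates on the mollified SDEs, plus the uniform Besov integrability to identify cluster points of the marginals). Passing to the limit in the mollified mild equation \eqref{MILD_NONLIN} identifies any accumulation point as a solution of the non-linear Fokker-Planck with density $\brho_{t,\mu}$ satisfying the announced integrability; from this one recovers a solution of the non-linear martingale problem and thus of \eqref{main} in the weak sense. For \emph{weak uniqueness}, take two solutions $\rho_1,\rho_2$ both satisfying the stated integrability and estimate $|\rho_1(s,\cdot)-\rho_2(s,\cdot)|_{B_{p',q'}^{-\beta}}$ on the mild equation. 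The Lipschitz estimate \eqref{LIP_METRIC}, together with $|\rho_i|_{L^1}=1$, again linearizes the difference, producing a Volterra inequality with the same integrable kernel to which Gronwall applies; once the two marginals coincide, the drift coefficient $\mathcal B_{\brho_{t,\mu}}(s,x)$ is a fixed measurable function whose induced (now linear) SDE falls into the Krylov-Röckner / Xie-Zhang weak-uniqueness regime.

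Finally, \emph{strong well-posedness} under \eqref{COS} and rotational invariance is obtained by combining weak existence and uniqueness above with a pathwise uniqueness result applied to the \emph{linearized} SDE whose drift is $\mathcal B_{\brho_{t,\mu}}(s,\cdot)$. The sharper threshold \eqref{COS} is precisely the one that guarantees that this measurable drift lies in the Lebesgue-Besov scale for which Krylov-Röckner \cite{kryl:rock:05} (Brownian case $\alpha=2$) and Xie-Zhang \cite{xie:zhan:20} (rotationally invariant $\alpha$-stable case, $\alpha \in (1,2)$) yield pathwise uniqueness; Yamada-Watanabe then upgrades weak to strong well-posedness.
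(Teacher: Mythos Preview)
Your plan matches the paper's approach: mollify, derive uniform a~priori Besov estimates on the Duhamel representation via the dequadrification trick and convolution inequalities, pass to the limit by stability, then obtain weak and strong uniqueness by freezing the marginal law and invoking Krylov--R\"ockner / Xie--Zhang on the resulting linear SDE.

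One point in your uniqueness/stability sketch needs more care. When you estimate the difference $\mathcal B_{\rho_1}\rho_1-\mathcal B_{\rho_2}\rho_2$ on the mild equation, any splitting produces a term of the form $\mathcal B_{\rho_i}(\rho_1-\rho_2)$, whose control via Lemma~\ref{LEMME_FGH} requires $|\rho_1(v,\cdot)-\rho_2(v,\cdot)|_{L^1}$, \emph{not} $|\rho_i|_{L^1}=1$. The identity $|\rho_i|_{L^1}=1$ dequadrifies the a~priori estimate but does not by itself linearize the difference. The paper closes this by running a \emph{second}, coupled estimate: bound $\sup_{v\le s}|\rho_1(v,\cdot)-\rho_2(v,\cdot)|_{L^1}$ through the embedding $B^0_{1,1}\hookrightarrow L^1$ and the same convolution lemma with parameters $\gamma=0$, $\ell=m=1$, which yields $\sup|\rho_1-\rho_2|_{L^1}\lesssim |\rho_1-\rho_2|_{\Lw^{r'}(B^{-\beta}_{p',q'})}$; this is then fed back into the Besov-norm inequality for the difference. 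A single Volterra--Gronwall on the $B^{-\beta}_{p',q'}$ norm alone will not close without this coupling.

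A minor stylistic difference: the paper does not use a singular Gronwall lemma but instead absorbs the integral term by taking $T-t$ small (coefficient $<1/2$) and then iterates; either device works here.
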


\begin{rem}[About the strong uniqueness] \textcolor{black}{In the above theorem we assume for strong uniqueness that $\mathcal W $ is rotationally invariant. This is of course the case for $\alpha=2 $ but it actually means that in the pure jump case we restrict {\color{black}the noise} to 
the isotropic stable process for $\mathcal W $. This is mainly due to the fact {\color{black}that} to establish the result, {\color{black}we apply a uniqueness criterion due to Xie and Zhang, \textcolor{black}{see }\cite{xie:zhan:20}, where \textcolor{black}{a} driving isotropic \textcolor{black}{process} is considered  in order to deal more easily with a multiplicative noise and related heat kernel estimates.}
{\color{black}While clearly restrictive, w}e actually believe that in the current additive framework, the PDE analysis  of the indicated reference could be adapted and that the strong uniqueness result should hold under the condition \eqref{COS} for any symmetric stable driving process satisfying Assumption \textbf{(UE)} (including e.g. symmetric cylindrical processes)}.

Observe {\color{black}finally} that for $\alpha=2$ the conditions \eqref{cond_gencase} and \eqref{COS} for weak and strong uniqueness coincide.
\end{rem}

\subsection{Strategy of proof}\label{sec_strategy}

As explained before, the construction of a weak solution to \eqref{main} proceeds hereafter from a regularization procedure and a stability argument. When doing so, we will use the following approximation result whose proof{\color{black}, which relies on the preliminaries set in Section \ref{SEC_BESOV},} is postponed to the Appendix.

\begin{prop}[Smooth approximation of the drift and associated convergence properties]\label{PROP_APPROX}
Let  $b\in L^r((t,T],B_{p,q}^\beta)$ and $\beta\in (-1,0] $, $1\le p,q \le \infty $.
There exists a sequence of \textcolor{black}{time-space} smooth bounded functions $(b^\varepsilon)_{\varepsilon >0} $ s.t.
$$|b-b^\varepsilon|_{L^{\bar r}((t,T],B_{p,q}^{\tilde \beta})} \underset{\varepsilon \rightarrow 0}{\longrightarrow} 0,\quad \forall \tilde \beta<\beta,$$
with $\bar r=r $ if $r<+\infty $ and for any $\bar r<+\infty $ if $r=+\infty$. {\color{black}Moreover, \textcolor{black}{there exists $ \cc\ge 1$}, $\displaystyle \sup_{\varepsilon>0}|b^\varepsilon|_{L^{\bar r}((t,T],B_{p,q}^{\beta})}\le \textcolor{black}{\cc} |b|_{L^{\bar r}((t,T],B_{p,q}^{\beta})}$.}
\end{prop}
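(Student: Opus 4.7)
The plan is to construct $b^\varepsilon$ by a standard joint space-time mollification and then verify the two claims via Littlewood-Paley analysis. First, I extend $b$ by zero outside $(t,T]$, pick a non-negative even $\rho\in C_c^\infty(\R\times\R^d)$ with $\int\rho=1$, set $\rho_\varepsilon(s,x):=\varepsilon^{-(d+1)}\rho(s/\varepsilon,x/\varepsilon)$, and define $b^\varepsilon$ as the restriction to $(t,T]\times\R^d$ of the full space-time convolution $\rho_\varepsilon*b$. Since $\rho_\varepsilon$ is smooth and compactly supported, $b^\varepsilon$ is jointly $C^\infty$, and the Besov duality inequality recalled in Section \ref{SEC_BESOV} yields the pointwise bound $|b^\varepsilon(s,x)|\le C(\varepsilon)\,|b|_{L^r((t,T],B_{p,q}^\beta)}$, giving joint boundedness for every fixed $\varepsilon>0$.

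For the uniform bound in $L^{\bar r}(B_{p,q}^\beta)$, I would exploit that the Littlewood-Paley projectors commute with convolution in space, so that $\Delta_j b^\varepsilon=\rho_\varepsilon*\Delta_j b$. Young's inequality in space (with unit-mass kernel) gives $|\Delta_j b^\varepsilon(s,\cdot)|_{L^p}\le (\rho_\varepsilon^{\mathrm{time}}*|\Delta_j b(\cdot,\cdot)|_{L^p})(s)$; taking the $\ell^q(2^{j\beta})$-norm in $j$ via Minkowski and then the $L^{\bar r}$-norm in $s$ via Young's inequality in time produces the claimed estimate, with constant $\cc=1$ when $\bar r=r<\infty$, and with an additional Hölder factor in $|T-t|^{1/\bar r}$ when $r=\infty$, exploiting the compact time support of the zero-extended $b$.

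To prove convergence in $L^{\bar r}(B_{p,q}^{\tilde\beta})$ for any $\tilde\beta<\beta$, I would cut the Besov series at a dyadic threshold $J$. For high frequencies $j>J$, the crude estimate $2^{j\tilde\beta}|\Delta_j(b-b^\varepsilon)|_{L^p}\le 2^{j(\tilde\beta-\beta)}\cdot 2^{j\beta}\bigl(|\Delta_j b|_{L^p}+|\Delta_j b^\varepsilon|_{L^p}\bigr)$ combined with the uniform Besov bound shows that the tail is controlled by $2^{J(\tilde\beta-\beta)}|b|_{L^{\bar r}(B_{p,q}^\beta)}$, which vanishes as $J\to\infty$ thanks to the gap $\tilde\beta<\beta$. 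For each of the finitely many low frequencies $j\le J$, $\Delta_j b$ is already a smooth function of $x$ (its spatial Fourier transform has compact support), and the classical mollifier property gives $\|\Delta_j b^\varepsilon(s,\cdot)-\Delta_j b(s,\cdot)\|_{L^p}\to 0$ as $\varepsilon\to 0$ for a.e.\ $s$; a dominated convergence argument upgrades this to $L^{\bar r}$ in time. Sending first $\varepsilon\to 0$ and then $J\to\infty$ concludes.

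The most delicate point will be the endpoint integrability indices. For $r=\infty$, mollification in time fails to converge in $L^\infty((t,T])$, which is precisely why one must settle for $\bar r<\infty$ and use Hölder to absorb the time integral. For $p=\infty$ or $q=\infty$, density of Schwartz functions in $B_{p,q}^\beta$ is not available in the usual sense; the remedy is to rely on Bernstein's inequality $\|\nabla\Delta_j b\|_{L^p}\le C2^j\|\Delta_j b\|_{L^p}$, which ensures that each low-frequency block is uniformly continuous and thus amenable to mollifier convergence in $L^p$ for any $p\in[1,\infty]$.
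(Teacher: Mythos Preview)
Your proposal is correct and follows a genuinely different (but equally valid) route from the paper. The paper mollifies in space via the \emph{stable heat semigroup}, setting $b_{\rm sp}^\varepsilon(s,\cdot)=\tilde p^\alpha(\varepsilon,\cdot)\star b(s,\cdot)$, and only afterwards convolves in time with a standard one-dimensional mollifier; the spatial convergence is carried out entirely in the \emph{thermic} characterization \eqref{HEAT_CAR} of the Besov norm, splitting the thermic integral $\int_0^1 \frac{dv}{v}\,\cdots$ at the threshold $v=2\varepsilon$ and using the semigroup identity $\tilde p^\alpha(\varepsilon,\cdot)\star \partial_v\tilde p^\alpha(v,\cdot)=\partial_v\tilde p^\alpha(v+\varepsilon,\cdot)$ together with a first-order Taylor expansion in $v$. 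Your argument instead uses a single joint space-time mollifier and the Littlewood--Paley characterization, splitting the dyadic sum at a frequency level $J$. Both approaches rest on exactly the same mechanism: the regularity gap $\tilde\beta<\beta$ kills the high-frequency/small-scale contribution uniformly in $\varepsilon$, while classical mollifier approximation handles the remaining finitely many low-frequency/large-scale pieces. The paper's route meshes naturally with the thermic norm used throughout and recycles the stable kernel already present in the Duhamel analysis; your route is more portable and makes no reference to the driving noise, at the cost of the Bernstein argument for the endpoint $p=\infty$ and a slightly more delicate vector-valued mollifier convergence in $L^{\bar r}_t(L^\infty_x)$---which is indeed fine here, since for each fixed $j$ the band-limited functions $\Delta_j b(s,\cdot)$ live in an effectively separable subspace and Bochner-space mollifier convergence applies.
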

 From the specific \textcolor{black}{convolution}  structure of the non-linear drift, we now introduce, for any measure $\nu$ {\color{black}on $\mathbb R^d$} for which this is meaningful, the notation:
 $$\mathcal B_{\nu}(s,\cdot):=b(s,\cdot)\star \nu(\cdot){\color{black}=\int_{\mathbb R^d} b(s,\cdot-y)\,\nu(dy)}.$$
 We now write similarly, for all $\varepsilon>0 $
 \begin{equation}\label{DRIFT_NON_LIN_MOLL}
 \mathcal B_{\nu}^\varepsilon(s,\cdot):=b^\varepsilon(s,\cdot)\star \nu(\cdot),
 \end{equation}
 which is  well defined for any $\nu \in \mathcal P(\R^d)${\color{black}, regardless of the Besov space where $b$ belongs to,} since $b^\varepsilon $ is smooth and bounded. This allows to introduce in turn the following mollified version of \eqref{main}:
\begin{equation}\label{main_smoothed}
X_s^{\varepsilon,t,\textcolor{black}{\xi}} = \xi + \int_t^s \mathcal B^\varepsilon_{\bmu^{\varepsilon,t,\mu}_v}(\textcolor{black}{v},X_v^{\varepsilon,t,\textcolor{black}{\xi}}) dv + \mW_s-\mW_t,\quad \bmu_s^{\varepsilon,t,\mu}(s,\cdot)=\text{Law}(X_s^{\varepsilon,t,\textcolor{black}{\xi}}),\, s\in (t,T],
\end{equation}
which is strongly (and thus weakly) well-posed for any $\varepsilon>0$, see Frikha, Konakov and Menozzi \cite{frik:kona:meno:20} for $\alpha\in (1,2) $ or \textcolor{black}{Sznitman} \cite{szni:88} in the Brownian case. \textcolor{black}{We emphasize that, due to weak uniqueness, $\bmu_s^{\varepsilon,t,\mu}(s,\cdot) $ only depends on the law $\mu $ of the random initial condition $\xi $}. \\

Let us now prove that \textcolor{black}{the marginal laws} of the unique weak solution of this regularized McKean-Vlasov SDE \textcolor{black}{are} absolutely continuous w.r.t. the Lebesgue measure. Consider the associated decoupled flow, 
{\color{black}that is} the \emph{linear} SDE parametrized by the  measure flow $(\bmu_s^{\varepsilon,t,\mu})_{s\in [t,T]}$ which writes
\begin{equation}\label{decoupled_main}
\tilde X_s^{\varepsilon,t,x,\mu} = x + \int_t^s \mathcal B^\varepsilon_{\bmu^{\varepsilon,t,\mu}_v}(v,\tilde X_v^{\varepsilon,t,x,\mu}) dv + \mW_s-\mW_t,\quad x \in \R^d. 
\end{equation}
Again, for every $\varepsilon >0$, this \emph{linear} SDE admits a unique weak solution. We denote its 
{\color{black}{marginal laws}} by $(\tilde \bmu_s^{\varepsilon,t,x,\mu})_{s\in [t,T]}$. Note importantly that, as
$\bmu^{\varepsilon,t,\mu}$ and $\tilde \bmu^{\varepsilon,t,x,\mu}$ are uniquely determined\textcolor{black}{,} the following key relation holds: for all $A$ in $\mathscr B([t,T]) \otimes \mathscr B(\R^d)$, $\bmu^{\varepsilon,t,\mu}(A) =  \big(\int_t^T dr \int \int \mathbb I_{(r,y)\in A}\tilde \bmu_r^{\varepsilon,t,x,\mu}(dy)\mu(dx)\big)$. 
\textcolor{black}{\textcolor{black}{It is now worth noticing} that in the mollified setting the drift in equation \eqref{decoupled_main} is smooth in the time and space variables and bounded. Indeed, for all $y\in \R^d $
$$|\mathcal B^\varepsilon_{\bmu^{\varepsilon,t,\mu}_r}(r,y) |\le \|b^\varepsilon\|_\infty\int_{\R^d}\bmu^{\varepsilon,t,\mu}_r(dz)=\|b^\varepsilon\|_\infty,$$
and the controls for the derivatives could be derived similarly differentiating $b^\varepsilon $.
This is what allows to derive from Friedman \cite{Friedman-64} for $\alpha=2 $ and Bichteller \textit{et al.} \cite{bich:grav:jaco:87} in the pure jump case that the SDE admits  for any $s\in (t,T]$ a smooth density (we could also refer to \cite{meno:zhan:20} in the rotationally invariant case)}.
Thus, the law $\tilde \bmu_r^{\varepsilon,t,x,\mu}$ is absolutely continuous w.r.t. the Lebesgue measure with density $\tilde \brho_{t,x,\mu}^{\varepsilon}(r,\cdot)$.
 Hence, there exists $\brho_{t,\mu}^{\varepsilon}$ such that:
\begin{equation}\label{relation_density}
\forall A \in \mathscr B([t,T]) \otimes \mathscr B(\R^d),\quad \bmu^{\varepsilon,t,\mu}(A) = \int_A \int_{\R^d} \tilde \brho_{t,x,\mu}^\varepsilon(v,y) \mu(dx) dv dy=:\int_A \brho_{t,\mu}^\varepsilon(v,y) dvdy,
\end{equation}
\emph{i.e.} $\bmu^{\varepsilon,t,\mu}$ is absolutely continuous as well. We emphasize that the relation \eqref{relation_density} holds for any $\alpha \in (1,2]$.\\

We now state a result, which will be at the starting point of our analysis and whose proof is postponed to Appendix \ref{PROOF_LEM_DUHAMEL_MOLL} for the sake of simplicity. 
\begin{lemme}[Duhamel representation for \textcolor{black}{the time marginal laws of the process with mollified interaction kernel}]\label{LEM_DUHAMEL_MOLL}
The following Duhamel representation holds for the solution of equation \eqref{main_smoothed}. For each $\varepsilon>0$, $\brho^\varepsilon_{t,\mu}$ satisfies for  all $s\in (t,T] $ and all $y\in \R^d $:
\begin{eqnarray}\label{main_MOLL}
\brho^\varepsilon_{t,\mu}(s,y) = p^\alpha_{s-t}\star \mu(y) \textcolor{black}{-}\int_t^s dv \Big[\{\mathcal B_{\brho^\varepsilon_{t,\mu}}^\varepsilon(v,\cdot) \brho^\varepsilon_{t,\mu}(v,\cdot)\} \star \nabla p_{s-v}^{\alpha}\Big] (y),
\end{eqnarray}
where $p^\alpha$ stands for the density of the driving process $\mW$, and with a slight abuse of notation w.r.t. \eqref{DRIFT_NON_LIN_MOLL}, $\mathcal B_{\brho^\varepsilon_{t,\mu}}^\varepsilon(v,\cdot)=[b^\varepsilon (v,\cdot)\star  \mathbf \brho^\varepsilon_{t,\mu}(v,\cdot)] $. 
\end{lemme}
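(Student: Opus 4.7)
The plan is to derive the mild representation directly through It\^o's formula applied with the stable semigroup playing the role of the test function, thereby avoiding a separate variation-of-parameters argument on the Fokker--Planck equation. The preceding discussion supplies the necessary regularity: since $b^\varepsilon$ is time-space smooth and bounded, the linear SDE \eqref{decoupled_main} admits a smooth density $\tilde\brho^\varepsilon_{t,x,\mu}$, and the averaging \eqref{relation_density} transfers this smoothness to $\brho^\varepsilon_{t,\mu}$; in particular, all It\^o-type manipulations below are rigorous and the relevant stochastic integrals are true centered martingales thanks to boundedness of $b^\varepsilon$ and of the gradients of smooth test functions.

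Fix $s\in(t,T]$ and $\varphi\in C_c^\infty(\R^d)$. My first step is to apply It\^o's formula to the function $u(v,x):=P^\alpha_{s-v}\varphi(x)$ for $v\in[t,s]$, along the trajectory $X_v^{\varepsilon,t,\xi}$. Since $u$ satisfies the backward Kolmogorov equation $\partial_v u + L^\alpha u=0$ (with $L^\alpha$ the generator of $\mW$), the non-drift contributions cancel and one obtains
\begin{equation*}
\varphi(X_s^{\varepsilon,t,\xi})=P^\alpha_{s-t}\varphi(\xi)+\int_t^s \mathcal B^\varepsilon_{\bmu_v^{\varepsilon,t,\mu}}(v,X_v^{\varepsilon,t,\xi})\cdot \nabla P^\alpha_{s-v}\varphi(X_v^{\varepsilon,t,\xi})\,dv+(M_s-M_t),
\end{equation*}
with $M$ a true martingale: use $\nabla P^\alpha_{s-v}\varphi=P^\alpha_{s-v}\nabla\varphi$ and the boundedness of $\nabla\varphi$ together with that of $b^\varepsilon$, both in the Brownian case and for the compensated jump integrals when $\alpha\in(1,2)$.

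Taking expectations, expressing the law of $X_v^{\varepsilon,t,\xi}$ via its density $\brho^\varepsilon_{t,\mu}(v,\cdot)$ and averaging the initial term against $\mu$, the identity becomes
\begin{equation*}
\int \varphi(y)\brho^\varepsilon_{t,\mu}(s,y)\,dy=\int \varphi(y)(p^\alpha_{s-t}\star\mu)(y)\,dy+\int_t^s\!\!\int \mathcal B^\varepsilon_{\brho^\varepsilon_{t,\mu}(v,\cdot)}(v,x)\cdot\nabla P^\alpha_{s-v}\varphi(x)\,\brho^\varepsilon_{t,\mu}(v,x)\,dx\,dv,
\end{equation*}
where the symmetry $p^\alpha(-z)=p^\alpha(z)$ recasts $\int P^\alpha_{s-t}\varphi(x)\mu(dx)$ as the displayed convolution with $\mu$. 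In the time-integral term I would expand $\nabla P^\alpha_{s-v}\varphi(x)=\int(\nabla p^\alpha_{s-v})(x-y)\varphi(y)\,dy$, swap integrals by Fubini, and evaluate $\int F(x)\cdot(\nabla p^\alpha_{s-v})(x-y)\,dx$ for $F=\mathcal B^\varepsilon_{\brho^\varepsilon_{t,\mu}(v,\cdot)}\brho^\varepsilon_{t,\mu}(v,\cdot)$; a change of variable combined with the oddness of $\nabla p^\alpha_{s-v}$ (inherited from the symmetry of $p^\alpha$) produces precisely $-(F\star\nabla p^\alpha_{s-v})(y)$, which is the origin of the minus sign in \eqref{main_MOLL}.

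Since $\varphi\in C_c^\infty(\R^d)$ is arbitrary and both sides are continuous in $y$ (by smoothness of $\brho^\varepsilon_{t,\mu}$ and dominated convergence under the convolution), the weak identity promotes to the pointwise Duhamel formula. The only genuinely delicate point is the parity/sign bookkeeping when transferring the gradient from the stable kernel onto the drift; every integrability and regularity issue is automatically taken care of by the mollification.
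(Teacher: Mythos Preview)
Your proposal is correct and follows essentially the same approach as the paper: apply It\^o's formula to $(v,x)\mapsto P^\alpha_{s-v}\varphi(x)$, exploit the backward equation $\partial_v u+L^\alpha u=0$ to cancel the non-drift terms, take expectations, and use Fubini plus the symmetry of $p^\alpha$ to rewrite the resulting weak identity as the pointwise Duhamel formula. Your treatment of the sign via the oddness of $\nabla p^\alpha_{s-v}$ is slightly more explicit than the paper's, while the paper spells out the $L^1$--$L^\infty$ integrability bounds justifying Fubini a bit more carefully; otherwise the two arguments are the same.
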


We will actually prove (see Lemma \ref{lem_unifesti_gencase_2}) that there exists a constant $C\ge 0$, s.t. for any $\varepsilon>0 $, and $p,q,r,\beta $ satisfying condition \eqref{cond_gencase}  (see Theorem \ref{THM_GEN}):
$$\qquad |\brho^\varepsilon_{t,\mu}|_{L^{r'}\big((t,T],B^{-\beta}_{p',q'}\big)}\le C(T-t)^\theta,\, \theta>0.$$

It is furthermore clear that for any $\varepsilon >0$, $\brho^{\varepsilon}_{t,\mu} $ solves, in the distributional sense, the equation:
\begin{equation*}
\label{PDE_EPS}
\begin{cases}
\partial_s\brho^{\varepsilon}_{t,\mu}(s,y)+ \div ( \mathcal B_{\brho^{\varepsilon}_{t,\mu}}^{\varepsilon}(s,y) \brho^{\varepsilon}_{t,\mu}(s,y))-L^\alpha \brho^{\varepsilon}_{t,\mu}(s,y)=0, \\
\brho^{\varepsilon}_{t,\mu}(t,\cdot)=\mu,
\end{cases}
\end{equation*}
or equivalently, for all $\varphi \in \textcolor{black}{C_0^\infty\big((-T,T)\times \R^{d}\big) }$ (up to a possible symmetrization for  $s<t$ and with $C_0^\infty\big( (-T,T)\times \R^{d}\big)$ standing for {\color{black}the set of }real valued infinitely differentiable functions with compact support in $(-T,T)\times \R^{d}$,
\begin{eqnarray}
\label{PDE_EPS_VAR}
-\int\varphi(t,y)\mu(dy)
&\textcolor{black}{-}&\int_t^T ds \int_{\R^d} dy\,  ( \mathcal B_{\brho^{\varepsilon_k}_{t,\mu}}^{\varepsilon_k}(s,y) \brho^{\varepsilon_k}_{t,\mu}(s,y))\cdot \textcolor{black}{\nabla \varphi(s,y)}\notag\\
&+&\int_t^T ds \int_{\R^d}  dy \brho^{\varepsilon_k}_{t,\mu}(s,y) (-\partial_s+(L^\alpha)^*)\varphi(s,y)=0,
\end{eqnarray}
where $(L^\alpha)^* $ stands for the adjoint of $L^\alpha$
 {\color{black}. The driving process $\mW$ being symmetric,}
then $ (L^\alpha)^*=L^\alpha$.\\
 
Provided that $\brho^{\varepsilon}_{t,\mu}$  admits a limit $\brho_{t,\mu}$ in some appropriate function space ({\color{black} see} Lemma \ref{FIRST_STAB} {\color{black} below}) - which precisely allows to take the limit in the Duhamel formulation \eqref{main_MOLL} - we derive that the limit satisfies (Lemma \ref{lem_ex_duha_gencase})
\begin{eqnarray}\label{main_DUHAMEL}
\brho_{t,\mu}(s,y) = p^\alpha_{s-t}\star \mu (y)  \textcolor{black}{-}\int_t^s dv \Big[\{\brho_{t,\mu}(v,\cdot)\mathcal B_{\brho_{t,\mu}}(v,\cdot)\} \star \nabla p_{s-v}^{\alpha}\Big] (y),
\end{eqnarray}
\emph{i.e.} is a mild solution to the non-linear Fokker-Planck equation related to \eqref{main}. By extension  $\brho_{t,\mu}(s,y)\textcolor{black}{\,dy}$ also provides a distributional solution to the non-linear Fokker-Planck equation related to \eqref{main}:
\begin{eqnarray}
-\int\varphi(t,y)\mu(dy)
&-&\int_t^T ds \int_{\R^d} dy\,  ( \mathcal B_{\brho_{t,\mu}}(s,y) \brho_{t,\mu}(s,y))\cdot \textcolor{black}{\nabla \varphi(s,y)}\notag\\
&+&\int_t^T ds \int_{\R^d}  dy \brho_{t,\mu}(s,y) (-\partial_s+{\color{black}L^\alpha}
)\varphi(s,y)=0, \ \varphi \in \textcolor{black}{C_0^\infty\big((-T,T)\times \R^{d}\big) }.\notag\\
\label{NL_PDE_FK}
\end{eqnarray}

This provides a sufficient (and necessary) basis to construct a solution to \eqref{main} identifying {\color{black}it as} the limit of the martingale problem related with \eqref{main_smoothed}.  More precisely, from the solution to \eqref{main_smoothed}, one can consider for each $\varepsilon >0$ the probability measure $\mathbf P^\varepsilon$ on the space $\Omega_\alpha$ (where $\Omega_\alpha$ stands for the space of c\`adl\`ag functions $\mathbb D([t,T];\R^d)$ if $\alpha \in (1,2)$ and the space of continuous functions $\mathcal C([t,T];\R^d)$ otherwise) such that, for $x(s)$, $t\le s\le T$, the canonical process on $\Omega_\alpha$, and for $\mathbf P^\varepsilon_t(s,dx):=\mathbf P^\varepsilon_t(x(s)\in dx)$ the family of probability measures induced by $x(s)$, we have:  {\color{black} the initial  measure $\mathbf P^\varepsilon_t(t,\cdot)$}
 is equal to $\mu$ 
and for all function $\phi$ twice continuously differentiable on $\R^d$, with bounded derivatives at all order, the process
$$
\phi(x(s))-\phi(x(t))-\int_t^s\left\{\mathcal B_{\mathbf P^\varepsilon_t}(v,x(v))\cdot \nabla\phi(x(v))+L^\alpha(\phi(x(v)))\right\}\,\,dv ,\,t\le s\le T,
$$
is a martingale. Provided, again, that for any $\varepsilon>0$ the  {\color{black}marginal distributions} 
$\mathbf P^\varepsilon_t(s,dx)=\brho_{t,\mu}^\varepsilon(s,x)\,dx$ lie in a appropriate space to ensure that $(\mathbf P_t^\varepsilon)_{\varepsilon >0}$ is compact in $\mathcal P(\Omega_\alpha)$ any corresponding limit along a converging subsequence defines naturally a solution to the (non-linear) martingale problem related to \eqref{main}. From the well-posedness of the limit Fokker-Planck equation one eventually derives uniqueness results for the  
{\color{black}{marginal laws}} giving  in turn the uniqueness of the martingale solution associated with \eqref{main}.

Also, the regularity properties of the  
{\color{black}{time-marginals} } of the limit non-linear Fokker-Planck equation, {\color{black}(}inherited from Lemma \ref{lem_unifesti_gencase_2} {\color{black}below)}, 
\textcolor{black}{allow} to regularize the distributional interaction kernel so that the non-linear drift $\mathcal B_{\brho_{t,\mu}}$ is actually a function living in some Lebesgue space (in time and space). This permits to give a meaning to the dynamics in \eqref{main} by connecting martingale and weak solutions through classical arguments. All these properties are \textcolor{black}{gathered in}  Propositions \ref{prop_ExistenceMain} and \ref{prop_UniquenessMain_gencas}.\\

\textcolor{black}{Furthermore}, when slightly reinforcing \eqref{cond_gencase} to \eqref{COS}, we are able to obtain better estimates (namely in  suitable \textcolor{black}{Lebesgue}-Besov space - with positive regularity index for the latter) on the non-linear drift  $\mathcal B_{\brho_{t,\mu}}$ which allows to derive, in turn, a strong well-posedness result by viewing the McKean-Vlasov SDEs as a \emph{linear} SDE parametrized by the unique weak solution and applying results of \cite{kryl:rock:05} in the Brownian case and{\color{black}, as highlighted previously,} \cite{xie:zhan:20} in the pure jump one for rotationally invariant processes (see Proposition \ref{Prop_STRONG_SOL} below).

\subsection{Notations} 
For any real $\ell \in [1,\infty]$, we denote by $\ell'$ its conjugate \emph{i.e.} $\ell'$ is such that $1/\ell+1/\ell'=1$.

When there are no ambiguities, we \textcolor{black}{will} often \textcolor{black}{omit} to mention the \textcolor{black}{spatial argument} when specifying a function space, i.e. we write $L^\ell(\R^d)=L^\ell $. 

\textcolor{black}{We will use below  the notation $C$ for a generic constant that might change from line to line but only depends on known parameters appearing in \eqref{cond_gencase}}.

\mysection{Besov spaces and associated tools}\label{SEC_BESOV}
\subsection{Reminders on Besov spaces}\label{SEC_REMINDERS}
\textcolor{black}{We recall that  the space $B^\gamma_{\ell,m}\textcolor{black}{(\R^d)}, \ell,m,\gamma\in \R$ is usually defined through the dyadic Littlewood-Paley decomposition {\color{black}(see e.g. \cite{Lemarie-02} or \cite{athr:butk:mytn:20}}). Namely, it consists in the elements of  $\mathcal S'(\R^d)$ (where $\mathcal S(\R^d)$ stands for the Schwartz space)} such that the quantity
\begin{equation*}
|f|_{{ B}^\gamma_{\ell,m},{\mathbf {LP}}}:=|\mathcal F^{-1}(\phi\mathcal F(f))|_{L^\ell}+
\left\{
\begin{aligned}
& \big(\sum_{j\in\mathbb N} 2^{\gamma j m}|\mathcal F^{-1}(\phi_j\mathcal F(f))|^m_{\textcolor{black}{L^\ell}
}\big)^{\frac 1m}\,\text{for}\,1\le m<\infty,\\
&\sup_{j\in\mathbb N}\big(2^{\gamma j}|\mathcal F^{-1}(\phi_j\mathcal F(f))|_{\textcolor{black}{L^\ell}
}\big)\,\text{if}\,m=\infty,
\end{aligned}
\right.
\end{equation*}

is finite where $ \phi\in C_0^\infty\textcolor{black}{(\R^d,\R)}$ s.t. $\phi(0)\neq 0 $,  $\phi_j(y)=2^j\phi(2^j y) $ and $\mathcal F $, $\mathcal F^{-1} $ respectively denote the Fourier and inverse Fourier transform.  
 This quantity also defines a norm for which $\textcolor{black}{B_{\ell,m}^\gamma(\R^d,\R^{\textcolor{black}{k}})}$ is a Banach space - see e.g. Triebel \cite{Triebel-83a}, Theorem 2.3.3., p. 48. Instead of the Lit{\color{black}t}lewood-Paley decomposition inducing  the norm $|\cdot|_{{ B}^\gamma_{\ell,m},{\mathbf{LP}}}$,
 we will from now on use the so-called thermic characterization of Besov spaces which involves \textcolor{black}{an underlying heat kernel}. 
 The thermic characterization is presented in Section 2.6.4 of \cite{Triebel-83a} for the Gaussian and Cauchy heat kernel (corresponding respectively to $ \alpha=2$ and $\alpha=1 $). However, one can also derive {\color{black}(}from Theorem 1 in Section 2.5.1 of the same reference{\color{black})} that $f\in B_{\ell,m}^{\gamma}$ if and only if the quantity below is finite:
 \begin{align}
| f|_{\mathcal H^\gamma_{\ell,m},\textcolor{black}{\tilde \alpha}}:=&|\mathcal F^{-1}(\phi\mathcal F(f))|_{L^\ell}+\left\{
\begin{aligned}
&
\left(\int_0^1\,\frac{dv}{v}v^{(n-\gamma/{\tilde \alpha})m}|\partial^n_v\tilde p^{\tilde \alpha}(v,\cdot)*f|^m_{L^\ell}\right)^{\frac 1m}\,\text{for}\,1\le m<\infty,\\
&\sup_{v\in(0,1]}\left\{v^{(n-\gamma/{\tilde \alpha})}|\partial^n_v\tilde p^{\tilde \alpha}(v,\cdot)*f|_{L^\ell}\right\}\,\text{for}\,m=\infty,
\end{aligned}
\right.\notag\\
=:&|\mathcal F^{-1}(\phi\mathcal F(f))|_{L^\ell}+\mathcal T_{\ell,m}^{\gamma,\textcolor{black}{\tilde \alpha}}(f)
{\color{black}.}
\label{HEAT_CAR}
\end{align}
{\color{black}T}he choice of the parameter ${\tilde \alpha}\in [1,2]$ is for free, $n$ being any non-negative integer (strictly) greater than $\gamma/{\tilde \alpha}$, the function $\phi$ is as above, and $\tilde p^{\tilde \alpha}(v,\cdot)$ denoting the density function at time $v$ of the $d$-dimensional isotropic ${\tilde \alpha}$-stable process. 

\textcolor{black}{In \cite{CdRM-20}, the authors chose ${\tilde \alpha}=\alpha$ for the thermic kernel. It was a natural choice therein since, in order to establish \textit{a priori} estimates on Duhamel type representations which involve the density $p^\alpha$ of the stable driving noise (see again \eqref{main_DUHAMEL} and \eqref{main_MOLL}), the previous thermic characterization allows to benefit from stability and scaling properties  in the associated convolutions when handling directly the corresponding norms}.

%
The quantity in \eqref{HEAT_CAR} also induces a norm{\color{black}, equivalent to $|\cdot |_{{ B}^\gamma_{\ell,m},{\mathbf {LP}}}$,} and for the remaining of the paper we will denote for $f\in B_{\ell,m}^\gamma$:
\begin{equation*}
\label{NORME_BESOV_CONCRETE}
| f|_{B^\gamma_{\ell,m}}:=| f|_{\mathcal H^\gamma_{\ell,m},\textcolor{black}{\tilde \alpha}}.
\end{equation*}
We will refer to the contribution $\mathcal T_{\ell,m}^{\gamma,\textcolor{black}{\tilde \alpha}}(f)=:\mathcal T_{\ell,m}^{\gamma}(f)$ in \eqref{HEAT_CAR} as the thermic part of the norm. 
\textcolor{black}{In the current work, the only place where we will actually explicitly use this induced norm (and the choice $\tilde \alpha=\alpha $) is for the proof of Proposition \ref{PROP_APPROX}
}.

\textcolor{black}{For $k\in \{1,\cdots,d\} $, we say that 
$f=(f_1,\cdots,f_k)\in B_{\ell,m}^\gamma(\R^d,\R^{\textcolor{black}{k}})$ if for all $j\in \{1,\cdots, k\},\ f_j\in B_{\ell,m}^\gamma(\R^d) $}.

\subsection{Properties of Besov spaces}
We recall here some key technical results on Besov spaces \textcolor{black}{and} their embeddings. 
\begin{itemize}
\item[(i)] \emph{Continuous embedding.} With Lebesgue spaces (\cite[Prop. 2.1]{Sawano-18}):  
\begin{equation}
\forall 1 \le \ell \le \infty,\qquad B^0_{\ell,1}\hookrightarrow L^\ell \hookrightarrow B_{\ell,\infty}^0.\label{EMBEDDING}\tag{$\mathbf E_1$}
\end{equation}
Between Besov spaces (\textcolor{black}{see (1.1) in \cite{trie:14} and Proposition 2.2 in \cite{Sawano-18}}):
\begin{equation}\label{BesovEmbedding}\tag{$\mathbf E_2 $}
B^{s_0}_{p_0,q_0}\hookrightarrow B^{s_1}_{p_1,q_1}\,\text{for}\,p_0,p_1,q_0,q_1 \in[1,\infty],\,q_0\le q_1,\, \textcolor{black}{p_0\le p_1} \,\text{such that}\,s_0-d/p_0\geq s_1-d/p_1. 
\end{equation}
We refer to  Section{\color{black}s} 4.1 {\color{black} and} 4.2 in \cite{Sawano-18} for additional embeddings.

\item[(ii)] \emph{Young (or convolution) inequality}. Let  $\gamma \in \R$, $\ell$ and $m$ in $[1,+\infty]$. Then for any $\delta\in\mathbb R$, $\ell_1,\ell_2\in [1,\infty]$ such that $1+\ell^{-1} = \ell_1^{-1} + \ell_2^{-{\color{black}1}}$ and $m_1,m_2\in (0,\infty]$ such that $m_1^{-1} \ge 
\textcolor{black}{(m^{-1}-m_2^{-1})}\vee 0$
\begin{equation}\label{YOUNG}
| f\star g|_{B_{\ell,m}^\gamma} \le \cv | f |_{B^{\gamma - \delta}_{\ell_1,m_1}} | g |_{B^{\delta}_{\ell_2,m_2}}, \tag{$\mathbf Y $}
\end{equation}
for $\cv>0$ a universal constant depending only on $d$. The initial proof of this inequality can be found in \cite[Theorem 3]{Burenkov-90}, see also \cite{KuhSch-21} for recent  convolution inequalities for Besov and Triebel-Lizorkin spaces.
\item[(iii)] \emph{Besov norm of heat kernel (see \cite[Lemma 11]{CdRM-20})}. \textcolor{black}{For $\ell,m\in [1,\infty],\ \gamma\in \R$}, there exists $\ch:=C(\alpha,\textcolor{black}{\ell},m,\gamma,d)>0$ s.t. for all multi-index $\mathbf a \in \mathbb N^{d}$ with $ |\mathbf a|\le 1 $, and $0<v<s<\infty$:
\begin{equation}\label{SING_STABLE_HK}
\big| \p^{\mathbf a} p_{s-v}^{\alpha} \big|_{B^{\gamma}_{\ell,m}}\le \frac{\ch}{(s-v)^{\frac \gamma\alpha+\frac d{\alpha}(1-\frac 1\ell) +\frac {|\mathbf a|}\alpha}}. \tag{$\mathbf H\mathbf K $}
\end{equation}
\item[(iv)] \textcolor{black}{\emph{Duality inequality (see e.g.  \cite[Proposition 3.6]{Lemarie-02}).} 
For $\ell,m\in [1,\infty],\ \gamma\in \R$ and  $(f,g)\in B_{\ell,m}^{\gamma}\times B_{\ell',m'}^{-\gamma} $ which are also functions, it holds:}
\begin{equation}
\label{EQ_DUALITY}\tag{$\mathbf D $}
|\int_{\R^d} f(y) g(y) dy|\le |f|_{B_{\ell,m}^\gamma}|g|_{B_{\ell',m'}^{-\gamma}}. 
\end{equation}
\end{itemize}

\subsection{A convolution Lemma in Besov spaces}
As an immediate consequence of the above technical results, we have the following Lemma.
\begin{lemme}\label{LEMME_FGH} 
Let $f,\, g_1,g_2$  be in $B^{\gamma_f}_{\ell_f,m_f}$, $B^{-\gamma_f}_{\ell_{g_1},m_{g_1}}$, $L^{\ell_{g_2}}$  respectively where 
$\gamma_f$ {\color{black}is} in $[-1,0]$, $\ell_f$, $\ell_{g_1}$, $\ell_{g_2}$ in $[1,\infty]$ and $m_f$, $m_{g_1}$ in $[1,\infty]$. 

For any $ \ell, \ell_h$ and $ m, m_h$ in $[1,\infty]$ satisfying
\begin{equation}\label{COND_FOR_LEMME_FGH}
(\ell_f)^{-1} + (\ell_{g_1})^{-1}+(\ell_{g_2})^{-1} +  (\ell_h)^{-1} = 2+  \ell^{-1},\qquad m_h \le  m,\qquad m_{g_1}^{-1} \ge (1 - m_f^{-1}) \vee 0,
\end{equation}
and any $\gamma \ge 0$ s.t. $h\in B^{\gamma}_{\ell_h, m_h}$, denoting  $\mF := f \star g_1$, there exists $\ccv\ge 1$ such that 
\begin{eqnarray}\label{THE_CTR_LEMME_FGH}
\big|(\mF  g_2) \star h\big|_{B^{\gamma}_{\ell, m}} \le \ccv | f |_{B^{\gamma_f}_{\ell_f,m_f}} | g_1 |_{B^{-\gamma_f}_{\ell_{g_1},m_{g_1}}} |g_2|_{L^{\ell_{g_2}}} \big|h\big|_{B^{ \gamma}_{\ell_h, m_h}}.
\end{eqnarray}
\end{lemme}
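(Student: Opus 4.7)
The plan is to chain three standard estimates: a Besov Young inequality for $\mF=f\star g_1$, a Hölder inequality for the pointwise product with $g_2$, and a second Besov Young inequality for the outer convolution with $h$.

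For the first step, I apply \eqref{YOUNG} with $\gamma=0$, $\delta=-\gamma_f$, targeting the output space $B^{0}_{\ell_{\mF},1}$ with $\ell_{\mF}^{-1}:=\ell_f^{-1}+\ell_{g_1}^{-1}-1$. The second-index condition $m_f^{-1}\geq (1-m_{g_1}^{-1})\vee 0$ required by \eqref{YOUNG} coincides with the symmetric form of the hypothesis $m_{g_1}^{-1}\geq (1-m_f^{-1})\vee 0$, both amounting to $m_f^{-1}+m_{g_1}^{-1}\geq 1$. This yields $|\mF|_{B^0_{\ell_\mF,1}}\leq \cv |f|_{B^{\gamma_f}_{\ell_f,m_f}}|g_1|_{B^{-\gamma_f}_{\ell_{g_1},m_{g_1}}}$, and \eqref{EMBEDDING} then promotes this into an $L^{\ell_\mF}$ bound. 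Hölder's inequality gives $|\mF g_2|_{L^{\ell_*}}\leq |\mF|_{L^{\ell_\mF}}|g_2|_{L^{\ell_{g_2}}}$ with $\ell_*^{-1}:=\ell_\mF^{-1}+\ell_{g_2}^{-1}=\ell_f^{-1}+\ell_{g_1}^{-1}+\ell_{g_2}^{-1}-1$, and a second use of \eqref{EMBEDDING} lifts this into $B^0_{\ell_*,\infty}$.

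For the final step, I apply \eqref{YOUNG} to $(\mF g_2)\star h$ with $\delta=\gamma$, sourced at $B^0_{\ell_*,\infty}$ for $\mF g_2$ and at $B^{\gamma}_{\ell_h,m_h}$ for $h$. The first-index balance $1+\ell^{-1}=\ell_*^{-1}+\ell_h^{-1}$, after substituting the expression of $\ell_*^{-1}$, unfolds into $\ell_f^{-1}+\ell_{g_1}^{-1}+\ell_{g_2}^{-1}+\ell_h^{-1}=2+\ell^{-1}$, which is exactly \eqref{COND_FOR_LEMME_FGH}; the second-index constraint $0\geq (m^{-1}-m_h^{-1})\vee 0$ is precisely the hypothesis $m_h\leq m$. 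Multiplying the three inequalities yields \eqref{THE_CTR_LEMME_FGH} with $\ccv$ depending only on $\cv$ and the embedding constants of \eqref{EMBEDDING}.

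The main difficulty is purely bookkeeping: one must verify that the intermediate exponents $\ell_\mF$ and $\ell_*$ lie in $[1,\infty]$ so that each step is well-posed, a constraint implicit in \eqref{COND_FOR_LEMME_FGH} combined with $\ell_{g_2},\ell_h\in[1,\infty]$. The distributional regularity $\gamma_f\in[-1,0]$ is fully absorbed by the first Young step, while the positive target regularity $\gamma$ is transported entirely by $h$ through the third step, so no extra regularity needs to be tracked for $\mF g_2$ itself.
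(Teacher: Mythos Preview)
Your proof is correct and follows essentially the same route as the paper's: two applications of the Besov Young inequality \eqref{YOUNG}, two uses of the Lebesgue--Besov embeddings \eqref{EMBEDDING}, and one H\"older inequality, with the same intermediate exponents (your $\ell_*$ is the paper's $\ell_\pi$). The only cosmetic difference is order: the paper starts from the outer convolution $(\mF g_2)\star h$ and peels inward, while you build outward from $\mF=f\star g_1$; the constraints and the final bound coincide.
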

Before proving the above result we point out that it will be crucial in the analysis of the non-linear drift term in the Duhamel formulations \eqref{main_MOLL} ({\color{black}with} mollified coefficients) and \eqref{main_DUHAMEL} ({\color{black}the related} limit equation) in order to obtain \textit{a priori} estimates. Roughly speaking{, {\color{black}in the proof of Lemma \ref{lem_unifesti_gencase_2}},} we will apply the above lemma with $\mathscr C=\mathcal B_{\brho^\varepsilon_{t,\mu}}^\varepsilon(v,\cdot) $ 
{\color{black}- corresponding to the case $f=b^\varepsilon(v,\cdot), g_1=\brho^\varepsilon_{t,\mu}(v,\cdot) $ -, $g_2=\brho^\varepsilon_{t,\mu}(v,\cdot) $ and}  $h= \nabla p_{s-v}^{\alpha}$ in the mollified setting and with the same entries without the superscripts in $\varepsilon $  
{\color{black}in} the limit one.

\begin{proof}
Apply first the Young inequality \eqref{YOUNG} to obtain
\begin{eqnarray*}
\big|(\mF  g_2) \star h\big|_{B^{ \gamma}_{ \ell,  m}} \le \cv \big|\mF  g_2\big|_{B^{0}_{\textcolor{black}{\ell_\pi}, \infty}}\big|h\big|_{B^{ \gamma}_{\ell_h, m_h}},
\end{eqnarray*}
for any $ 1+  \ell^{-1}=(\textcolor{black}{\ell_\pi})^{-1} + (\ell_h)^{-1} $ and $ m \ge m_h$. Note now that, applying first the continuous embedding \eqref{EMBEDDING}, then the H\"older inequality and next the continuous embedding \eqref{EMBEDDING} again give\textcolor{black}{s}
\begin{eqnarray*}
\big|\mF  g_2\big|_{B^{0}_{\textcolor{black}{\ell_\pi}, \infty}} &\le& C\big|\mF  g_2\big|_{L^{\textcolor{black}{\ell_\pi}}} \le  C\big|\mF\big|_{L^{\ell_\mF}}  |g_2|_{L^{\ell_{g_2}}} \le  C\big|\mF\big|_{B^{0}_{\ell_{\mF}, 1}}  |g_2|_{L^{\ell_{g_2}}},
\end{eqnarray*}
for any $(\ell_\mF)^{-1} + (\ell_{g_2})^{-1} = (\textcolor{black}{\ell_\pi})^{-1}$. Combining this last integrability condition  with the previous one arising from the convolution inequality \textcolor{black}{yields}
\begin{equation}\label{RELATION_INTER}
\textcolor{black}{ 1+  \ell^{-1}=}(\ell_\mF)^{-1} + (\ell_{g_2})^{-1} +  (\ell_h)^{-1} .
\end{equation}
Moreover, applying one more time the Young inequality \eqref{YOUNG} gives
\begin{eqnarray}
\big|\mF\big|_{B^{0}_{\ell_{\mF}, 1}} = \big| f \star g_1 \big|_{B^{0}_{\ell_{\mF}, 1}} \le \cv \textcolor{black}{| g_1 |_{B^{-\gamma_f}_{\ell_{g_1},m_{g_1}}}}| f |_{B^{\gamma_f}_{\ell_f,m_f}} ,
\end{eqnarray}
with $(\ell_{g_1})^{-1} + (\ell_f)^{-1} = 1+ (\ell_\mF)^{-1}$ and any $m_{g_1}^{-1} \ge (1 - m_f^{-1}) \vee 0$.
 Bringing together the above estimates yields
\begin{eqnarray*}
\big|(\mF  g_2) \star h\big|_{B^{\gamma}_{ \ell, m}} \le  \ccv | f |_{B^{\gamma_f}_{\ell_f,m_f}} | g_1 |_{B^{-\gamma_f}_{\ell_{g_1},m_{g_1}}} |g_2|_{L^{\ell_{g_2}}} \big|h\big|_{B^{ \gamma}_{\ell_h, m_h}},
\end{eqnarray*}
for any
\begin{equation*}
(\ell_f)^{-1} + (\ell_{g_1})^{-1}+(\ell_{g_2})^{-1} +  (\ell_h)^{-1} = 2+  \ell^{-1},\qquad m_h \le  m,\qquad m_{g_1}^{-1} \ge (1 - m_f^{-1}) \vee 0,
\end{equation*}
in view of \eqref{RELATION_INTER}.
\end{proof}

\subsection{On the embedding of $\mathcal P(\R^d) $ into Besov spaces}
We give here the following Lemma which specifies how the set of probability measures on $\R^d $ can be embedded in an intersection of Besov spaces. 
\begin{lemme}\label{lem_proba_in besov} $\mathcal P(\R^d)$ is a subset of $\cap_{\ell \ge 1}B^{-d/\ell'}_{\ell,\infty}$ and $\cap_{\ell \ge 1}B^{-d/\ell'-\epsilon}_{\ell,m}$, {\color{black}for} $\epsilon>0$, $m\in[1,\infty)$ {\color{black}and $\ell\in[1,\infty]$.}  
\end{lemme}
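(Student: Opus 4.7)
The strategy is to verify, for the indicated ranges of parameters, the finiteness of the thermic norm
\[
|\mu|_{B^\gamma_{\ell,m}} = |\mathcal F^{-1}(\phi\mathcal F(\mu))|_{L^\ell} + \mathcal T^{\gamma}_{\ell,m}(\mu),
\]
as defined in \eqref{HEAT_CAR}. In both cases we have $\gamma \le 0$, so we may take $n=0$ as the non-negative integer strictly greater than $\gamma/\alpha$, which means the thermic part only involves $\tilde p^\alpha(v,\cdot)\star \mu$ itself, not derivatives. The low-frequency part is immediate: write $\mathcal F^{-1}(\phi\mathcal F(\mu)) = \mathcal F^{-1}(\phi)\star\mu$; since $\mathcal F^{-1}(\phi)$ is Schwartz, Young's convolution inequality for a function against a finite measure gives $|\mathcal F^{-1}(\phi)\star\mu|_{L^\ell}\le |\mathcal F^{-1}(\phi)|_{L^\ell}\cdot 1 < \infty$, uniformly in $\ell\in[1,\infty]$.

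The core of the argument is the following scaling bound. For any $\ell\in[1,\infty]$ and $v>0$, by Young's inequality applied to a finite measure together with the scaling $\tilde p^\alpha(v,x) = v^{-d/\alpha}\tilde p^\alpha(1,xv^{-1/\alpha})$,
\[
\big|\tilde p^\alpha(v,\cdot)\star \mu\big|_{L^\ell} \;\le\; \big|\tilde p^\alpha(v,\cdot)\big|_{L^\ell} \;=\; C\, v^{-\frac{d}{\alpha}(1-\frac{1}{\ell})} \;=\; C\, v^{-\frac{d}{\alpha\ell'}}.
\]
Plugging this into the thermic norm with $\gamma=-d/\ell'$ and $m=\infty$ yields
\[
\mathcal T^{-d/\ell'}_{\ell,\infty}(\mu) \;=\; \sup_{v\in(0,1]} v^{\frac{d}{\alpha\ell'}}\,\big|\tilde p^\alpha(v,\cdot)\star\mu\big|_{L^\ell} \;\le\; C,
\]
so $\mu \in B^{-d/\ell'}_{\ell,\infty}$ for every $\ell\in[1,\infty]$, giving the first inclusion.

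For the second inclusion, fix $\epsilon>0$ and $m\in[1,\infty)$. The same scaling bound gives, for $\gamma=-d/\ell'-\epsilon$,
\[
\mathcal T^{-d/\ell'-\epsilon}_{\ell,m}(\mu) \;\le\; C\left(\int_0^1 \frac{dv}{v}\, v^{m(\frac{d}{\alpha\ell'}+\frac{\epsilon}{\alpha})}\, v^{-\frac{md}{\alpha\ell'}}\right)^{1/m} \;=\; C\left(\int_0^1 \frac{dv}{v}\, v^{\frac{m\epsilon}{\alpha}}\right)^{1/m} < \infty,
\]
where integrability at $v=0$ follows precisely from the strict positivity of $\epsilon$.

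There is essentially no hard step: the only genuinely subtle point is that at the borderline exponent $\gamma=-d/\ell'$ with $m<\infty$, the $dv/v$ integral diverges logarithmically, which is exactly why the $\epsilon$-loss in the second statement is unavoidable in this approach (and why the sharp exponent is only attained for $m=\infty$). The argument unifies the endpoints $\ell=1$ (where $-d/\ell'=0$ and the bound is just mass conservation) and $\ell=\infty$ (where $-d/\ell'=-d$) under the same calculation.
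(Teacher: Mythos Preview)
Your proof is correct and follows essentially the same approach as the paper: control the low-frequency part via the convolution bound $|\mathcal F^{-1}(\phi)\star\mu|_{L^\ell}\le|\mathcal F^{-1}(\phi)|_{L^\ell}$, take $n=0$ in the thermic characterization since $\gamma\le 0$, and use the scaling/heat-kernel bound $|\tilde p^\alpha(v,\cdot)\star\mu|_{L^\ell}\le|\tilde p^\alpha(v,\cdot)|_{L^\ell}\le Cv^{-d/(\alpha\ell')}$ to conclude. The only cosmetic differences are that the paper invokes \eqref{SING_STABLE_HK} and \eqref{EMBEDDING} for the $L^\ell$-norm of the stable kernel where you compute it directly by scaling, and the paper spells out the Jensen/Fubini argument for the convolution bound where you cite Young's inequality against a finite measure.
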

\begin{proof}
Let $\nu $ be an arbitrary probability measure on $\R^d$. 
For any $\ell$ in $[1,\infty)$, \textcolor{black}{$\phi\in C_0^\infty(\R^d,\R),\ \phi(0)\neq 0 $ (as in Section \ref{SEC_REMINDERS})},
\begin{align*}
|\mathcal F^{-1}(\phi \mathcal F(\nu))|_{L^\ell}=&| \mathcal F^{-1}(\phi) {\color{black}\star}\nu|_{L^\ell}=\bigg(\int_{\R^d} {\color{black}dx \ }|\mathcal F^{-1}(\phi){\color{black}\star}\nu(x)|^\ell  
\Big)^{\frac 1\ell}=\bigg(\int_{\R^d} {\color{black}dx \ }\bigg|\int_{\R^d}\mathcal F^{-1}(\phi)(x-y)\nu(dy)\Big|^\ell \bigg)^{\frac 1\ell}\\
\le&  \bigg(\int_{\R^d} {\color{black}dx \ }\int_{\R^d}\Big|\mathcal F^{-1}(\phi) (x-y)\Big|^\ell\nu(dy) 
\bigg)^{\frac 1\ell}=|\mathcal F^{-1}(\phi)|_{L^\ell}<+\infty,
\end{align*}
using  {\color{black}classical convexity inequalities} and the Fubini theorem as well as the fact that, since $\phi\in C_0^\infty$, $\mathcal F^{-1} (\phi)\in \mathcal S $ for the last inequality. If now $\ell=\infty $, one readily gets:
$$|\mathcal F^{-1}(\phi \mathcal F(\nu))|_{L^\infty}= |\mathcal F^{-1}(\phi) {\color{black}\star} \nu|_{L^\infty}\le |\mathcal F^{-1}(\phi)|_{L^\infty}<+\infty. $$
Let now consider $\gamma\le 0 $ and, with the terminology of Section \ref{SEC_REMINDERS}, the thermic part $\mathcal T_{\ell,m}^\gamma(\nu) $ of $|\nu|_{B^{\gamma}_{\ell,m}}$. For this choice, from \eqref{HEAT_CAR}, we can take $n=0$ and write then {\color{black}-} similarly to the previous computations (replacing actually $\mathcal F^{-1}(\phi) $ by $\tilde p^\alpha(v,\cdot) $) and according to \textcolor{black}{\eqref{SING_STABLE_HK} {\color{black}and }\eqref{EMBEDDING}} {\color{black}-} for any $\ell \in [1,+\infty]$,
\[
\mathcal T_{\ell,\infty}^{\gamma}(\nu):=\sup_{v\in (0,1]}v^{-\frac \gamma\alpha}| \tilde p^\alpha(v,\cdot){\color{black}\star}\nu|_{L^\ell}\le \sup_{v\in (0,1]} v^{-\frac \gamma\alpha} | \tilde p^\alpha(v,\cdot)|_{L^\ell}\le C\sup_{v\in (0,1]}v^{\frac 1\alpha[-\gamma-\frac d{ \ell'}]}<+\infty,
\]
whenever $0\textcolor{black}{\le} -\gamma-d/\ell' \iff \gamma\textcolor{black}{\le} -d/\ell' $. On the other hand, for $m\in [1,+\infty) $,
$\mathcal T_{\textcolor{black}{\ell,m}}^{\gamma}(\nu)<+\infty$ whenever $-\gamma-d/\ell'>0$. This gives the claim.
\end{proof}

\subsection{\textcolor{black}{Some useful weighted spaces}}
In the following we will also frequently use weighted Lebesgue spaces in the time variable with arguments valued in Besov spaces. Namely, for $\gamma\in \R_+ $, $ \ell,  m,  r \in [1,\infty]$ we introduce, for $0\le t<  S\le T$,  the Bochner space:
\begin{equation}\label{DEF_L_B_WEIGHTED}
\Lw^r((t,S],B_{\ell,m}^\gamma):=\Bigg\{f:s\in[t,S]\mapsto f(s,\cdot)\in B^{ \gamma}_{ \ell, m}\,\text{measurable and s.t.}\, \int_t^S {\color{black}ds} \ {\color{black}(S-s)^{-\frac r{\alpha}}} | f(s,\cdot)|_{B^{ \gamma}_{ \ell, m}}^{ r}<\infty\Bigg\},
\end{equation}
equipped with its natural norm:
\[
|f|_{\Lw^r((t,S],B_{\ell,m}^\gamma)}:=\left(\int_t^S {\color{black}ds} \ {\color{black}(S-s)^{\textcolor{black}{-\frac r\alpha}}} | f(s,\cdot)|_{B^{ \gamma}_{ \ell, m}}^{ r}\right)^{\textcolor{black}{\frac 1r}}. 
\]
{\color{black}This space} turns out to be a Banach space for the chosen weight {\color{black}$s\in[t,S)\mapsto (S-s)^{-r/\alpha}$ } as well as for a rather large class of weights (see e.g. \cite[Chapter 1]{HyNeVeWe-16}). {\color{black}It can be pointed out that the above weight depends strongly on the right hand limit of the time interval. The choice of making this dependency implicit {\color{black}in the notation of the norm $|\cdot|_{\Lw^r((t,S],B_{\ell,m}^\gamma)}$} \textcolor{black}{is performed in order to} lighten forthcoming computations.}  


\mysection{Proof of Theorem \ref{THM_GEN}}\label{SEC_PROOF_WEAK}

In the whole section, we might assume for some results that {\color{black}the time-horizon} $T>0 $ is \emph{small enough}, \emph{i.e.} smaller than some positive time depending only on the \emph{parameters} $\alpha,\beta,p,q,r,d$. This is somehow required to derive  the a priori estimates {\color{black}(see Lemma \ref{lem_unifesti_gencase_2})} needed to obtain well-posedness of the non-linear Fokker-Planck equation. 
 This leads to an a priori restriction of the time interval $[t,T]$ which is nevertheless just provisional: as \textit{\textcolor{black}{a priori}} estimates will require nothing else on the initial condition but  being a probability measure, and will depend smoothly on the time interval size, they can then be iterated up to an arbitrary time-horizon $T$.


\subsection{The corresponding non-linear Fokker-Planck equation} \label{sec_nl_FK}
We here prove the following Proposition which roughly 
states that for any initial condition being a probability measure, the non-linear Fokker-Planck equation admits a unique solution in a suitable Lebesgue-Besov space, \textcolor{black}{for a certain range of parameters}. More specifically, the range of the parameters allows to quantify: on the one hand, how much the Besov norm of the solution is integrable; on the other hand, how much the local regularity of the Besov norm can be improved in term of the gap $\Gamma$ in  \eqref{cond_gencase}:
\begin{equation}
\Gamma:=  \beta -\Big(  1 -\alpha+ \frac dp +\frac \alpha r\Big)>0. \label{GAP} \tag{\textbf{G}}
\end{equation}

\begin{prop}\label{WP_FK} Assume that the \emph{parameters} are such that \eqref{cond_gencase} holds. Then, for any  $(t,\mu)$  in $[0,T] \times \mathcal P(\R^d)$, the non-linear Fokker-Planck equation \eqref{NL_PDE_FK} admits a solution  
which is unique among all the distributional solutions lying in $ L^{\overline{r}}(B^{-\beta\textcolor{black}{+\vartheta \Gamma}}_{p',q'})$ where
\begin{eqnarray}
\overline{r}\in \Bigg[r',\bigg(\frac 1\alpha\left(-\beta + \textcolor{black}{\vartheta \Gamma}+\frac d{p}\right)\bigg)^{-1}\Bigg), \ \textcolor{black}{\forall \vartheta\in [0,1)}\label{cond_time_gencase}.
\end{eqnarray}

Moreover, \textcolor{black}{for all $s\in [t,T], \brho_{t,\mu}(s,\cdot)$ belongs to $\mathcal P(\R^d)$}. \textcolor{black}{Eventually,  for a.e. $s$ in $(t,T]$, $\brho_{t,\mu}(s,\cdot)$ is absolutely continuous w.r.t. the Lebesgue measure  
and satisfies the Duhamel representation \eqref{main_DUHAMEL}}.
\end{prop}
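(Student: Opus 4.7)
The plan is to follow the mollification-stability strategy outlined in Section \ref{sec_strategy}. For each $\varepsilon>0$, Proposition \ref{PROP_APPROX} and \eqref{main_smoothed} provide a smooth density $\brho^\varepsilon_{t,\mu}$ satisfying the Duhamel representation \eqref{main_MOLL} of Lemma \ref{LEM_DUHAMEL_MOLL}. The whole proof then rests on uniform-in-$\varepsilon$ a priori estimates in the weighted Bochner space $\Lw^{r'}((t,T],B^{-\beta}_{p',q'})$ introduced in \eqref{DEF_L_B_WEIGHTED}, which is the natural target space suggested by the duality heuristic \eqref{LIP_METRIC} together with the embedding of $\mathcal P(\R^d)$ into Besov spaces (Lemma \ref{lem_proba_in besov}).

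The a priori estimate (which will be the content of Lemma \ref{lem_unifesti_gencase_2}) is obtained as follows. Taking the $B^{-\beta}_{p',q'}$-norm of the right-hand side of \eqref{main_MOLL}, I would bound the free term $p^\alpha_{s-t}\star\mu$ using \eqref{SING_STABLE_HK} and Lemma \ref{lem_proba_in besov}, while the nonlinear term is controlled by Lemma \ref{LEMME_FGH} applied with $f=b^\varepsilon(v,\cdot)\in B^\beta_{p,q}$, $g_1=g_2=\brho^\varepsilon_{t,\mu}(v,\cdot)$ and $h=\nabla p^\alpha_{s-v}$. The crucial point is that $g_2$ enters only through its $L^1$-norm, which equals one since $\brho^\varepsilon_{t,\mu}(v,\cdot)$ is a probability density; this is exactly the \emph{dequadrification} phenomenon mentioned after \eqref{CTR_HEURI_2}, and it linearizes what would otherwise be a quadratic estimate. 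Combining with \eqref{SING_STABLE_HK} for $\nabla p^\alpha_{s-v}$ and tracking exponents, condition \eqref{cond_gencase} (i.e., $\Gamma>0$ in \eqref{GAP}) is precisely what guarantees that the resulting time singularity $(s-v)^{-\gamma}$ is integrable against the weight $(T-v)^{-r'/\alpha}$ with room to spare; the gap $\Gamma$ can then be invested to upgrade the regularity index from $-\beta$ to $-\beta+\vartheta\Gamma$ for any $\vartheta\in[0,1)$, yielding the range \eqref{cond_time_gencase}. A Gronwall-type lemma in $\Lw^{r'}$, together with a smallness argument on $T$ (then iterated), closes the estimate uniformly in $\varepsilon$.

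Existence then follows by a standard stability/compactness argument: the uniform bound provides weak compactness of $(\brho^\varepsilon_{t,\mu})_\varepsilon$ in the Lebesgue-Besov space, and the convergence $b^\varepsilon\to b$ of Proposition \ref{PROP_APPROX} allows passage to the limit in the integral identity \eqref{PDE_EPS_VAR}, and also in the Duhamel representation \eqref{main_MOLL}, yielding \eqref{main_DUHAMEL} and \eqref{NL_PDE_FK} for some limit $\brho_{t,\mu}$ in the announced space. Conservation of total mass along the approximation and nonnegativity of $\brho^\varepsilon_{t,\mu}$ transfer to the limit, giving $\brho_{t,\mu}(s,\cdot)\in\mathcal P(\R^d)$ for all $s\in[t,T]$, while the quantitative bound in $L^{\bar r}(B^{-\beta+\vartheta\Gamma}_{p',q'})$ gives absolute continuity for almost every $s\in(t,T]$. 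For uniqueness, I would take two distributional solutions $\brho^1,\brho^2$ in the class \eqref{cond_time_gencase}, show they both satisfy the Duhamel formula \eqref{main_DUHAMEL} (by mollifying the drift in the weak formulation and exploiting the regularity already known), and then re-apply Lemma \ref{LEMME_FGH} to the bilinear expression $\mathcal B_{\brho^1}\brho^1-\mathcal B_{\brho^2}\brho^2=\mathcal B_{\brho^1-\brho^2}\brho^1+\mathcal B_{\brho^2}(\brho^1-\brho^2)$, where once again the $L^1$-mass of the $\brho^i$ equals one, linearizing the estimate for the difference and closing by Gronwall in $\Lw^{r'}$.

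The main obstacle is the derivation of the a priori estimate: all the indices in the convolution inequality of Lemma \ref{LEMME_FGH} and in the heat-kernel bound \eqref{SING_STABLE_HK} must be balanced simultaneously against the duality pairing $(\beta,-\beta)$, the scaling of $\nabla p^\alpha_{s-v}$, and the integrability in time, so that the final singularity exponent matches exactly the threshold \eqref{cond_gencase}. Choosing the $\Lw^{r'}$ weight in \eqref{DEF_L_B_WEIGHTED} is precisely what makes the Gronwall argument close while still allowing the initial datum to be a mere probability measure (treated via Lemma \ref{lem_proba_in besov}); getting all these scales to align is the genuinely delicate part, and everything else flows from it.
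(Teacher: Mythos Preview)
Your overall architecture matches the paper's: mollify, derive uniform a priori bounds via Lemma \ref{LEMME_FGH} and dequadrification, pass to the limit, then prove uniqueness on the Duhamel formulation. The a priori estimate you describe is essentially Lemma \ref{lem_unifesti_gencase_2}, with the correct choice of indices.

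There is, however, a genuine gap in both your existence and uniqueness steps. For existence you invoke \emph{weak compactness} of $(\brho^\varepsilon_{t,\mu})_\varepsilon$; but the nonlinear term in \eqref{PDE_EPS_VAR} is $(b^\varepsilon\star\brho^\varepsilon_{t,\mu})\,\brho^\varepsilon_{t,\mu}$, quadratic in $\brho^\varepsilon_{t,\mu}$, and weak convergence alone does not pass to the limit in such a product. The paper instead proves in Lemma \ref{FIRST_STAB} that $(\brho^{\varepsilon_k}_{t,\mu})_k$ is a \emph{Cauchy sequence} in $L^{\bar r}(B^{-\beta+\vartheta\Gamma}_{p',q'})\cap L^\infty((t,T],L^1)$, i.e.\ obtains strong convergence. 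The same issue resurfaces in your uniqueness argument: in the decomposition
\[
\mathcal B_{\brho^1}\brho^1-\mathcal B_{\brho^2}\brho^2=\mathcal B_{\brho^1-\brho^2}\,\brho^1+\mathcal B_{\brho^2}\,(\brho^1-\brho^2),
\]
the second summand, when fed into Lemma \ref{LEMME_FGH}, forces $g_2=\brho^1-\brho^2$ and hence produces $|\brho^1-\brho^2|_{L^1}$, \emph{not} $|\brho^i|_{L^1}=1$. Your claim that ``the $L^1$-mass of the $\brho^i$ equals one, linearizing the estimate'' therefore does not close the loop.

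What the paper actually does (both in Lemma \ref{FIRST_STAB} and in Lemma \ref{lem_unique_FK_gencase}) is to run a \emph{coupled} estimate: it controls $\sup_{v\in(t,s]}|(\brho^1-\brho^2)(v,\cdot)|_{L^1}$ by reapplying Lemma \ref{LEMME_FGH} to the Duhamel difference with target space $B^0_{1,1}\hookrightarrow L^1$, obtaining a bound of this $L^1$-difference in terms of the weighted Besov norm $|\brho^1-\brho^2|_{\Lw^{r'}((t,\cdot],B^{-\beta+\vartheta\Gamma}_{p',q'})}$; this is then fed back into the Besov-norm estimate, and only the resulting closed inequality admits a Gronwall-type conclusion for $T-t$ small. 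This additional $L^1$ step is the missing ingredient in your sketch.
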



The preliminary step to prove the previous well-posedness result for the non-linear Fokker-Planck equation consists in establishing suitable \textit{a priori} estimates, in the corresponding function space, for the mollified equation \eqref{main_MOLL}. This is the point of the next Lemma in which we use what we call a \textit{dequadrification} approach (see eq. \eqref{FOR_FURTHER_REF_2} below and explanations there).  We then proceed with a Lemma that gives that for any decreasing sequence $(\varepsilon_k)_k $ \textcolor{black}{going to 0},  $(\brho_{t,\mu}^{\varepsilon_k})_{k} $ is a Cauchy sequence in some appropriate function space (Lemma \ref{FIRST_STAB} below). We then conclude that the limit of the previous convergent sequence  solves the non-linear Fokker Planck equation \eqref{NL_PDE_FK} (Lemma \ref{lem_ex_duha_gencase}) in a distributional sense. We eventually prove, through a rigorous derivation of the Duhamel formulation \eqref{main_DUHAMEL}, that this solution is unique among all the solutions lying in the corresponding function space (Lemma \ref{lem_unique_FK_gencase}). This procedure completes the proof of Proposition \ref{WP_FK}. 

In the following, it is always assumed that the \emph{parameters} are such that \eqref{cond_gencase} holds.

\begin{lemme}\label{lem_unifesti_gencase_2} 
For any $(t,\mu)$ in $[0,T] \times \mathcal P(\R^d)$, for any $\vartheta\in [0,1)$, any $\bar r$ satisfying \textcolor{black}{\eqref{cond_time_gencase}}
there exist $C:=C(d,\alpha,r,\beta,p, \vartheta)>0$ and $\theta:=\theta(d,\alpha,\beta,p,\vartheta)>0$ such that it holds

\begin{eqnarray*}
\forall \varepsilon >0,\quad \int_t^T ds |\brho^\varepsilon_{t,\mu}(s,\cdot)|_{B^{-\beta+ \vartheta \Gamma}_{p',q'}}^{\overline{r}} \le C(T-t)^{\theta},
\end{eqnarray*}
and for any $t<S<T$ with the notations of \eqref{DEF_L_B_WEIGHTED}, for $\check  r'\in \Bigg[r',\bigg(\frac 1\alpha\left(-\beta+\vartheta\Gamma + \frac d{p}+1\right)\bigg)^{-1}\Bigg)$:
\begin{eqnarray}\label{LEMME_2_WITHOUT_SING_2}
\forall \varepsilon >0,\quad |\brho^\varepsilon_{t,\mu}|_{{\rm{L}}_{\rm{w}}^{\check r'}\big((t,S],B_{p',q'}^{-\beta+\vartheta\Gamma}\big)}
 \le C
 (S-t)^{ \frac{\textcolor{black}{\check \delta}}{\textcolor{black}{\check r'}}},
\end{eqnarray}
for some $\textcolor{black}{\check \delta}:={\check \delta(\check r')}>0 $.

\end{lemme}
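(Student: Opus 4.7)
The plan is to apply $|\cdot|_{B^{-\beta+\vartheta\Gamma}_{p',q'}}$ to both sides of the Duhamel representation \eqref{main_MOLL}, derive a linear Volterra-type inequality for $\Phi^\varepsilon(s) := |\brho^\varepsilon_{t,\mu}(s,\cdot)|_{B^{-\beta+\vartheta\Gamma}_{p',q'}}$, and close it in the $L^{\bar r}$ time scale by combining a singular integral estimate with a short-time absorption/iteration argument.

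For the initial-datum contribution $p^\alpha_{s-t}\star \mu$, I would combine the Young inequality \eqref{YOUNG} with the embedding of $\mathcal P(\R^d)$ into $B^0_{1,\infty}$ given by Lemma \ref{lem_proba_in besov} and the heat kernel bound \eqref{SING_STABLE_HK} to obtain a pointwise estimate of the form $C(s-t)^{-a}$ with exponent $a = \alpha^{-1}(-\beta+\vartheta\Gamma + d/p)$; this is $L^{\bar r}(t,T)$-integrable precisely under the upper bound in \eqref{cond_time_gencase}. For the nonlinear term, the key step is to invoke Lemma \ref{LEMME_FGH} with $f = b^\varepsilon(v,\cdot) \in B^\beta_{p,q}$, $g_1 = \brho^\varepsilon_{t,\mu}(v,\cdot) \in B^{-\beta}_{p',q'}$ (controlled by $\Phi^\varepsilon(v)$ via the embedding \eqref{BesovEmbedding}, since $\vartheta\Gamma \ge 0$), $g_2 = \brho^\varepsilon_{t,\mu}(v,\cdot)$ measured in $L^1$, and $h = \nabla p^\alpha_{s-v}$ whose relevant Besov norm is read off from \eqref{SING_STABLE_HK}. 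Taking $g_2$ in $L^1$ is the \emph{dequadrification} step: the mass-one property of the time marginals kills what would otherwise be a quadratic dependence on the unknown, leaving a genuinely linear Volterra inequality
\begin{equation*}
\Phi^\varepsilon(s) \le \frac{C}{(s-t)^a} + C \int_t^s \frac{|b^\varepsilon(v,\cdot)|_{B^\beta_{p,q}}}{(s-v)^{a+1/\alpha}}\, \Phi^\varepsilon(v)\, dv,
\end{equation*}
whose inner kernel is integrable at $v = s$ if and only if $\vartheta\Gamma < \Gamma + \alpha/r$, which is automatic for $\vartheta \in [0,1)$ and $\Gamma>0$.

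The final step is to take $L^{\bar r}(t,T)$ norms: Hölder in time (using $b \in L^r$) combined with a Hardy--Littlewood--Sobolev bound on the resulting fractional convolution kernel yields an inequality of the form $|\Phi^\varepsilon|_{L^{\bar r}(t,T)} \le C_1 (T-t)^{\theta_0} + C_2 (T-t)^{\theta_1} |b|_{L^r(B^\beta_{p,q})}\, |\Phi^\varepsilon|_{L^{\bar r}(t,T)}$ with strictly positive exponents $\theta_0,\theta_1$ produced by the gap $\Gamma$ in \eqref{GAP}. For $T-t$ small enough the second coefficient is strictly less than $1$ and can be absorbed in the left-hand side; the uniform-in-$\varepsilon$ control $|b^\varepsilon|_{L^r(B^\beta_{p,q})} \lesssim |b|_{L^r(B^\beta_{p,q})}$ from Proposition \ref{PROP_APPROX} transfers the bound to $\brho^\varepsilon_{t,\mu}$ independently of $\varepsilon$, and since the estimate depends only on the drift norm and on the probability-measure nature of the initial datum, a finite number of iterations extends it to any horizon $T$. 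For the weighted version \eqref{LEMME_2_WITHOUT_SING_2} the same scheme is applied with the $\Lw^{\check r'}$ norm in $s$; the extra $1/\alpha$ appearing in the admissibility range for $\check r'$ compensates exactly the additional singularity at $s = S$ produced by the weight $(S-s)^{-\check r'/\alpha}$. I expect the main difficulty to lie in the delicate simultaneous matching of all Lebesgue and Besov indices when invoking Lemma \ref{LEMME_FGH}, ensuring in parallel that the two time singularities remain integrable and that the final closure leaves a strictly positive time power $(T-t)^\theta$.
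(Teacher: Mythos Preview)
Your proposal is correct and follows essentially the same route as the paper: Duhamel representation, Lemma~\ref{LEMME_FGH} with the choices $f=b^\varepsilon$, $g_1=\brho^\varepsilon_{t,\mu}$ in $B^{-\beta}_{p',q'}$, $g_2=\brho^\varepsilon_{t,\mu}$ in $L^1$ (dequadrification), $h=\nabla p^\alpha_{s-v}$, the initial-datum bound via Lemma~\ref{lem_proba_in besov} and \eqref{SING_STABLE_HK}, then closure in $L^{\bar r}$ by a short-time absorption and iteration. The only cosmetic difference is that the paper implements the time-convolution closure by an explicit H\"older-$L^{\bar r/r'}$ splitting followed by Fubini rather than invoking Hardy--Littlewood--Sobolev, but these are equivalent devices yielding the same positive exponent $\delta$ in \eqref{DEF_DELTA}.
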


\begin{proof}[Proof of Lemma \ref{lem_unifesti_gencase_2}]
{\color{black}
For the mollified equation, from \textcolor{black}{the Duhamel representation} \eqref{main_MOLL}, we have for any $\ell,m \in [1,\infty]$,
\begin{eqnarray}
|\brho^\varepsilon_{t,\mu}(s,\cdot)|_{B^{-\beta + \vartheta \Gamma}_{\ell,m}} \le |p^\alpha_{s-t}\star \mu|_{B^{-\beta + \vartheta \Gamma}_{\ell,m}} 
+ \int_t^s dv \bigg|\Big[\{\mathcal B_{\brho^\varepsilon_{t,\mu}}^\varepsilon(v,\cdot) \brho^\varepsilon_{t,\mu}(v,\cdot)\} \star \nabla p_{s-v}^{\alpha}\Big] (\cdot)\bigg|_{B^{-\beta + \vartheta \Gamma}_{\ell,m}}.
\label{AFTER_ONE_TRIANG_INEQ_2ppp}
\end{eqnarray}
At this point, the strategy consists in handling the term
\begin{equation}\label{TOAPPLYLEMMA4}
\bigg|\Big[\{\mathcal B_{\brho^\varepsilon_{t,\mu}}^\varepsilon(v,\cdot) \brho^\varepsilon_{t,\mu}(v,\cdot)\} \star \nabla p_{s-v}^{\alpha}\Big] (\cdot)\bigg|_{B^{-\beta + \vartheta \Gamma}_{\ell,m}},
\end{equation}
by applying Lemma \ref{LEMME_FGH} with $\mathscr C=\mathcal B_{\brho^\varepsilon_{t,\mu}}^\varepsilon(v,\cdot)$ (so that $f=b^\varepsilon(v,\cdot),\, g_1=g_2=\brho^\varepsilon_{t,\mu}(v,\cdot)$) and $h= \nabla p_{s-v}^{\alpha}$. This gives,
\begin{eqnarray*}
\big|(\mathcal B_{\brho^\varepsilon_{t,\mu}}^\varepsilon (v,\cdot) \brho^\varepsilon_{t,\mu}(v,\cdot) ) \star \nabla p^\alpha_{s-v}\big|_{B^{-\beta + \vartheta \Gamma}_{\ell,m}} \le \ccv|  b ^\varepsilon(v,\cdot) |_{B^{\gamma_f}_{\ell_f,m_f}} | \brho^\varepsilon_{t,\mu}(v,\cdot) |_{B^{-\gamma_f}_{\ell_{g_1},m_{g_1}}}  |\brho^\varepsilon_{t,\mu}(v,\cdot)|_{L^{\ell_{g_2}}} \big|\nabla p^\alpha_{s-v}\big|_{B^{-\beta + \vartheta \Gamma}_{\ell_h, m_h}}.
\end{eqnarray*}
provided
\begin{equation}\label{COND_FOR_LEMME_FGH_2}
(\ell_f)^{-1} + (\ell_{g_1})^{-1}+(\ell_{g_2})^{-1} +  (\ell_h)^{-1} = 2+  \ell^{-1},\qquad m_h \le  m,\qquad m_{g_1}^{-1} \ge (1 - m_f^{-1}) \vee 0.
\end{equation}
 A way to get rid of the somehow \textit{quadratic} dependence in $\brho^\varepsilon_{t,\mu} $ in the above term consists in choosing $\ell_{g_2}=1$ to exploit that we are actually dealing with probability densities: $ |\brho^\varepsilon_{t,\mu}(v,\cdot)|_{L^1}=1$. Since we also want some compatibility between the Besov norm with which we will estimate $\brho^\varepsilon_{t,\mu}(s,\cdot) $ in the left hand side of \eqref{AFTER_ONE_TRIANG_INEQ_2ppp} and the one associated with the same quantity $\brho^\varepsilon_{t,\mu}(v,\cdot)$ in the right hand side of the same equation, 
this also imposes $\ell_{g_1} =\ell $, $m_{g_1}=m $. In particular, these choices therefore yield from \eqref{COND_FOR_LEMME_FGH_2} that $\ell_f^{-1}+\ell_h^{-1}=1 $. Since $\ell_f=p $, \textcolor{black}{corresponding to the} initial integrability index of the \textit{singular} kernel $b^\varepsilon$, this yields $\ell_h=p' $ recalling $p^{-1}+(p')^{-1}=1  $. This will contribute to the time-singularity associated with the Besov norm of the gradient of the stable heat kernel, see \eqref{SING_STABLE_HK}. 

The other parameters in \eqref{COND_FOR_LEMME_FGH_2} also naturally follow from the considered assumptions at hand. Namely, since the interaction kernel $f=b^\varepsilon(v,\cdot)$ \textcolor{black}{lies in} the Besov space $B_{p,q}^\beta$ uniformly in $\varepsilon>0$, this suggests to choose $\gamma_f=\beta$. Then, from the condition \eqref{COND_FOR_LEMME_FGH_2}, we get $m_{g_1}=m,\ m^{-1}\ge (1-m_f^{-1})\vee 0 $. Since $m_f=q $ (assumption on $b^\varepsilon $), $m=q'=m_{g_1}$ is a natural choice. Let us now fix $m_h$ so  that $m\ge m_h $. This again yields the natural choice $m_h=1 $, which gives the lowest singularity exponent for the stable heat kernel (see \eqref{SING_STABLE_HK}). When doing so, we thus obtain
\begin{eqnarray}
|\brho^\varepsilon_{t,\mu}(s,\cdot)|_{B^{-\beta + \vartheta \Gamma}_{\ell,q'}} \le  |p^\alpha_{s-t}\star \mu|_{B^{\gamma}_{\ell,q'}} 
+ \ccv \int_t^s dv  |  b ^\varepsilon(v,\cdot) |_{B^{\beta}_{p,q}} | \brho^\varepsilon_{t,\mu}(v,\cdot) |_{B^{-\beta}_{\ell,q'}}  \big|\nabla p^\alpha_{s-v}\big|_{B^{-\beta + \vartheta \Gamma}_{p', 1}}.
\label{FOR_STRONG_UNIQUENESS}
\end{eqnarray}
The only parameter that remains to be fixed is the integrability index $\ell$. A natural choice is given by $\ell =p'$ (this can indeed be seen from the proof of Lemma \ref{LEMME_FGH} where the relations $(\textcolor{black}{\ell_\pi})^{-1} + (\ell_h)^{-1} = 1+  \ell^{-1}$ and $(\ell_\mF)^{-1} + (\ell_{g_2})^{-1} = (\textcolor{black}{\ell_\pi})^{-1}$ naturally yields to \textcolor{black}{$\ell_\pi=1,\ell_{\mF}=\infty $}, which gives in turn that $\ell=\ell_h=\ell_{g_1}=p' $). Eventually, to ensure the compatibility between the Besov norms on $\brho_{t,\mu}^\varepsilon$ appearing in the above right and left hand sides, we use the embedding \eqref{BesovEmbedding} which ensures that $B^{-\beta+\vartheta \Gamma}_{p',q'} \hookrightarrow B^{-\beta}_{p',q'}$, as $\vartheta \Gamma \ge 0$. We end up with the following estimate:
\begin{eqnarray*}
|\brho^\varepsilon_{t,\mu}(s,\cdot)|_{B^{-\beta+\vartheta\Gamma}_{p',q'}} \le  |p^\alpha_{s-t}\star \mu|_{B^{-\beta+\vartheta\Gamma}_{p',q'}} 
+ \ccv C\int_t^s dv  |  b ^\varepsilon(v,\cdot) |_{B^{\beta}_{p,q}} | \brho^\varepsilon_{t,\mu}(v,\cdot) |_{B^{-\beta+\vartheta\Gamma}_{p',q'}}  \big|\nabla p^\alpha_{s-v}\big|_{B^{-\beta+\vartheta\Gamma}_{p', 1}},
\label{FOR_FURTHER_REF_2}
\end{eqnarray*}
 i.e.  we applied  Lemma \ref{LEMME_FGH}, with $\ell=p',m=q',\gamma=-\beta+\vartheta\Gamma, \ \ell_f=p,m_f=q,\gamma_f=\beta,\ \textcolor{black}{\ell_{g_2}=1, \ell_{g_1}=p'},m_{g_2}=q',\ \ell_h=p',m_h=1$, on the second term in the right hand side of  \eqref{AFTER_ONE_TRIANG_INEQ_2ppp} and used eventually \eqref{BesovEmbedding}.
 }
 
Thus, using \eqref{SING_STABLE_HK} and applying the $L^1:L^{r}-L^{r'}$ - H\"older inequality in time, we obtain: 
\begin{eqnarray}\label{BEFORE_CHOICE_2}
|\brho^\varepsilon_{t,\mu}(s,\cdot)|_{B^{-\beta+\vartheta\Gamma}_{p',q'}} \le  |p^\alpha_{s-t}\star \mu|_{B^{-\beta+\vartheta\Gamma}_{p',q'}} + C| b^\varepsilon|_{L^r(B^{\beta}_{p,q})} \bigg(\int_t^s \frac{dv}{(s-v)^{\left(\frac{-\beta+\vartheta\Gamma}{\alpha} + \frac d{\alpha}\frac 1p + \frac 1\alpha\right)r'}}|\brho^\varepsilon_{t,\mu}(v,\cdot)|_{B^{-\beta+\vartheta\Gamma}_{p',q'}}^{r'}\bigg)^{\frac {1}{r'}},
\end{eqnarray}
which actually gives an integrable singularity in time as soon  as 

\begin{equation*}\label{COND_GAMMAELLM_2}
\left(\frac{-\beta+\vartheta\Gamma}{\alpha} + \frac d{\alpha}\frac 1p + \frac 1\alpha\right)r'<1\Longleftrightarrow \beta -\vartheta\Gamma> \frac dp + 1+\frac \alpha r -\alpha\iff (1-\vartheta)\Gamma>0.
\end{equation*}
Note that this condition is indeed fulfilled from \eqref{cond_gencase} and \eqref{GAP} for any $\vartheta\in [0,1) $. Write now from the Young inequality \eqref{YOUNG} and \textcolor{black}{\eqref{SING_STABLE_HK}}:
\begin{align}
|p^\alpha_{s-t}\star \mu|_{B^{-\beta+\vartheta\Gamma}_{p',q'}}\textcolor{black}{\le \textcolor{black}{\cv} |p_{s-t}^\alpha|_{B_{p',q'}^{-\beta+\vartheta\Gamma}} |\mu|_{B_{1,\infty}^0}}\le 
\frac{C}{(s-t)^{\big(\frac{-\beta+\vartheta\Gamma}{\alpha}+\frac d{\alpha  p}\big)}},\label{FIRST_TERM_ROOTS_2}
\end{align}
\textcolor{black}{using as well Lemma \ref{lem_proba_in besov}, which precisely gives that any $\mu \in \mathcal P(\R^d) \in B_{1,\infty}^0 $, for the last inequality}. 
The above controls precisely allow to specify the range for the parameter $\bar r$ \textcolor{black}{for} which we will be able to estimate the $L^{\bar r}(B_{p',q'}^{\beta}) $ norm of the density. Namely, one gets:

\begin{align}\label{SING_CI_2}
|p^\alpha_{s-t}\star \mu|_{B^{-\beta+\vartheta\Gamma}_{p',q'}}^{\bar r}\le 
\frac{C}{(s-t)^{\big(\frac{-\beta+\vartheta\Gamma}{\alpha}+\frac d{\alpha  p}\big)\bar r}},
\end{align}
which gives an integrable singularity in time provided the integrability condition \textcolor{black}{\eqref{cond_time_gencase}} holds for $\bar r $.

Taking now both sides of \eqref{BEFORE_CHOICE_2} to the exponent $\bar r$, we then get from usual convexity inequalities 
and integrating then in $s\in [t,T]$ that, 
\begin{eqnarray}
&&\int_t^T ds |\brho^\varepsilon_{t,\mu}(s,\cdot)|_{B^{-\beta+\vartheta \Gamma}_{p',q'}}^{\bar r
}\label{AFTER_CONV_2} \\
&\le& 2^{\bar r-1}\int_t^T ds\Bigg\{|p^\alpha_{s-t}\star \mu|_{B^{-\beta+\vartheta \Gamma}_{p',q'}}^{\bar r}+C| b^\varepsilon|_{L^r(B^{\beta}_{p,q})}^{\bar r} \bigg(\int_t^s \frac{dv}{(s-v)^{\left(\frac{-\beta+\vartheta \Gamma}{\alpha} + \frac d{p\alpha} + \frac 1\alpha\right)r'}}|\brho^\varepsilon_{t,\mu}(v,\cdot)|_{B^{-\beta+\vartheta \Gamma}_{p',q'}}^{r'}\bigg)^{\frac {\overline{r}}{r'}}
\Bigg\}.\notag
\end{eqnarray}
Since we are considering compact time intervals, we can assume w.l.o.g. that $\bar r>r'$ so that the H\"older inequality  
{\color{black} $L^{1}:L^{\overline{r}/r'}-L^{(1-r'/\overline{r})^{-1}}$  applies and combined with a splitting of \textcolor{black}{the  $(r')^{\rm th}$ power} of the time singularity as $r'(r'/\overline{r})+r'(1-r'/\overline{r})$, \textcolor{black}{it} yields to}
 \begin{eqnarray*}
 &&\int_t^T ds |\brho^\varepsilon_{t,\mu}(s,\cdot)|_{B^{-\beta+\vartheta \Gamma}_{p',q'}}^{\bar r}
\le 2^{\bar r-1}\Bigg\{ \int_t^T ds |p^\alpha_{s-t}\star \mu|_{B^{-\beta+\vartheta \Gamma}_{p',q'}}^{\bar r}  \notag\\
&&+ C| b^\varepsilon|_{L^r(B^{\beta}_{p,q})}^{\bar r} \int_t^T ds \int_t^s \frac{dv}{(s-v)^{\left(\frac{-\beta+\vartheta \Gamma}{\alpha} + \frac d{p\alpha} + \frac 1\alpha\right)r'}} |\brho^\varepsilon_{t,\mu}(v,\cdot)|_{B^{-\beta+\vartheta \Gamma}_{p',q'}}^{\bar r}\bigg(\int_t^s\frac{dv}{(s-v)^{\left(\frac{-\beta+\vartheta \Gamma}{\alpha} + \frac d{p\alpha} + \frac 1\alpha\right)r'}}\bigg)^{\frac{\bar r}{r'}-1}\Bigg\}.\notag
\end{eqnarray*}
Therefore, setting 
\begin{equation}\label{DEF_DELTA}
\delta:=1-\Big(\frac{-\beta+\vartheta \Gamma}{\alpha} + \frac d{p\alpha} + \frac 1\alpha\Big)r'>0,
\end{equation}
from condition{\color{black}s} \eqref{cond_gencase} and \eqref{GAP}, using then the Fubini theorem, one gets:
\begin{align}
\int_t^T ds |\brho^\varepsilon_{t,\mu}(s,\cdot)|_{B^{-\beta+\vartheta \Gamma}_{p',q'}}^{\bar r} 
\le& 2^{\bar r-1}\Bigg\{ \int_t^T ds |p^\alpha_{s-t}\star \mu|_{B^{-\beta+\vartheta \Gamma}_{p',q'}}^{\bar r}  \notag\\
 &+C(T-t)^{\delta (\frac{\bar r}{r'}-1)}| b^\varepsilon|_{L^r(B^{\beta}_{p,q})}^{\overline{r}}\int_t^T dv |\brho^\varepsilon_{t,\mu}(v,\cdot)|_{B^{-\beta+\vartheta \Gamma}_{p',q'}}^{\overline{r}}\int_v^T \frac{ds}{(s-v)^{\left(\frac{-\beta+\vartheta \Gamma}{\alpha} + \frac d{p\alpha} + \frac 1\alpha\right)r'}}\Bigg\}\notag\\
\le &2^{\bar r-1}\Bigg\{ \int_t^T ds |p^\alpha_{s-t}\star \mu|_{B^{-\beta+\vartheta \Gamma}_{p',q'}}^{\bar r}+C(T-t)^{\delta \frac{\bar r}{r'}}| b^\varepsilon|_{L^r(B^{\beta}_{p,q})}^{\overline{r}}\int_t^T dv |\brho^\varepsilon_{t,\mu}(v,\cdot)|_{B^{-\beta+\vartheta \Gamma}_{p',q'}}^{\overline{r}}\Bigg\}.\label{AFTER_FUB}
\end{align}
Recalling now \eqref{SING_CI_2} and \textcolor{black}{\eqref{cond_time_gencase}}{\color{black}, as well as the uniform control of $b^\epsilon$ given in Proposition \ref{PROP_APPROX},} we get that there exists $\theta>0 $ s.t. for $T$ small enough:
$$\bigg(\int_t^T ds |\brho^\varepsilon_{t,\mu}(s,\cdot)|_{B^{-\beta+\vartheta \Gamma}_{p',q'}}^{\bar r}\bigg)^{\frac 1{\bar r}} \le C(T-t)^\theta.$$

Let us turn to the proof of \eqref{LEMME_2_WITHOUT_SING_2}. Restart from \eqref{BEFORE_CHOICE_2} taking  both sides to the  exponent $\textcolor{black}{\check r'}$. Apply a convexity inequality to distribute the exponent on all the terms of the r.h.s., multiply by $(S-s)^{-\frac{\textcolor{black}{\check r'}}\alpha} $ for $t<S\le T$ and integrate {\color{black}the resulting expression }on $(t,S] $, we obtain:
\begin{eqnarray*}
&&\int_t^S \frac{ds}{(S-s)^{\frac{\textcolor{black}{\check r'}}{\alpha} }} |\brho^\varepsilon_{t,\mu}(s,\cdot)|_{B^{-\beta+\vartheta \Gamma}_{p',q'}}^{\textcolor{black}{\check r'}}\\
&\le& C_{\textcolor{black}{\check r'}}\bigg\{\int_t^S \frac{ds}{(S-s)^{\frac{\textcolor{black}{\check r'}}{\alpha} }} |p^\alpha_{s-t}\star \mu|_{B^{-\beta+\vartheta \Gamma}_{p',q'}}^{\check r'}\\
&& + | b^{{\color{black}\epsilon}}|_{L^{\textcolor{black}{\check r}}(B^{\beta}_{p,q})}^{\check r'} \int_t^S  \frac{ds}{(S-s)^{\frac{\check r'}{\alpha}}}   \int_t^s \frac{dv}{(s-v)^{\left(\frac{-\beta+\vartheta \Gamma}{\alpha} + \frac d{p\alpha} + \frac 1\alpha\right)\check r'}}|\brho^\varepsilon_{t,\mu}(v,\cdot)|_{B^{-\beta+\vartheta \Gamma}_{p',q'}}^{\check r'}\bigg\}\\
&\le&C_{\check r'}\bigg\{ \int_t^S \frac{ds}{(S-s)^{\frac{\check r'}{\alpha}}} \frac{1}{(s-t)^{(\frac{-\beta+\vartheta \Gamma}\alpha+\frac d{\alpha p})\check r'}} \\
&&+ | b^{{\color{black}\epsilon}}|_{L^{\textcolor{black}{r}}(B^{\beta}_{p,q})}^{\check r'} \int_t^S dv|\brho^\varepsilon_{t,\mu}(v,\cdot)|_{B^{-\beta+\vartheta \Gamma}_{p',q'}}^{\textcolor{black}{\check r'}}  \int_v^S  \frac{ds}{(S-s)^{\frac{\textcolor{black}{\check r'}}{\alpha}}(s-v)^{\left(\frac{-\beta+\vartheta \Gamma}{\alpha} + \frac d{p\alpha} + \frac 1\alpha\right) \check r'}}\bigg\},
\end{eqnarray*}
using the Fubini theorem and \eqref{FIRST_TERM_ROOTS_2} to derive the last inequality. \textcolor{black}{Since
$$ \left(\frac{-\beta+\vartheta\Gamma}{\alpha} + \frac d{\alpha}\frac 1p + \frac 1\alpha\right)\textcolor{black}{\check r'}<1\Longleftrightarrow \beta -\vartheta\Gamma> \frac dp + 1+\frac \alpha {\check r} -\alpha\iff (1-\vartheta)\Gamma>\alpha(\frac 1{\check r}-\frac 1r),
$$
it is clear that for a fixed $\vartheta\in [0,1) $, $\check r $ can be taken large enough in order to have an integrable singularity in the previous expression. Namely, 
\begin{align}\label{DEF_DELTA_CHECK}
\check r \in \Bigg(\Big(\frac{(1-\vartheta)\Gamma}{\alpha}+\frac 1r\Big)^{-1},r\Bigg]\Longrightarrow \check \delta:=1-\left(\frac{-\beta+\vartheta\Gamma}{\alpha} + \frac d{\alpha}\frac 1p + \frac 1\alpha\right)\textcolor{black}{\check r'}>0. 
\end{align}
}

From \eqref{DEF_DELTA_CHECK} we get that:
\begin{eqnarray*}
\int_t^S \frac{ds}{(S-s)^{\frac{\textcolor{black}{\check r'}}{\alpha} }} |\brho^\varepsilon_{t,\mu}(s,\cdot)|_{B^{-\beta+\vartheta \Gamma}_{p',q'}}^{\textcolor{black}{\check r'}}
&\le &C(S-t)^{\textcolor{black}{\check \delta}}  + C|b^{{\color{black}\varepsilon}}|_{L^r(B^{\beta}_{p,q})}^{\textcolor{black}{\check r'}} \int_t^S\frac{dv}{(S-v)^{\textcolor{black}{\frac{\check r'}{\alpha} -\check \delta}}}|\brho^\varepsilon_{t,\mu}(v,\cdot)|_{B^{-\beta+\vartheta \Gamma}_{p',q'}}^{\textcolor{black}{\check r'}} \\
&\le &C(S-t)^{\textcolor{black}{\check \delta}}  + C|b^{{\color{black}\varepsilon}}|_{L^r(B^{\beta}_{p,q})}^{\textcolor{black}{\check r'}} (S-t)^{\textcolor{black}{\check \delta}}\int_t^S\frac{dv}{(S-v)^{\frac{\textcolor{black}{\check r'}}{\alpha}}}|\brho^\varepsilon_{t,\mu}(v,\cdot)|_{B^{-\beta+\vartheta \Gamma}_{p',q'}}^{\textcolor{black}{\check r'}} \\
&\le & 2C(S-t)^{\textcolor{black}{\check  \delta}},
\end{eqnarray*}
as soon as $S-t$ is small enough to have  $C|b|_{L^r(B^{\beta}_{p,q})}^{\textcolor{black}{\check r'}}(S-t)^{\textcolor{black}{\check \delta}}\le 1/2 $ {\color{black}($\Rightarrow$ $C|b^{\color{black}\varepsilon}|_{L^r(B^{\beta}_{p,q})}^{\textcolor{black}{\check r'}}\textcolor{black}{(S-t)^{\check \delta}}\le 1/2 $)} for the last but one inequality. {\color{black} Identifying the norm $|\cdot|_{\Lw^{\check r'}((t,S],B^{-\beta+\vartheta \Gamma}_{p',q'})}$ on the left-hand side, t}his gives \eqref{LEMME_2_WITHOUT_SING_2} and concludes the proof of the Lemma.
\end{proof}

\begin{lemme}[Stability Lemma]\label{FIRST_STAB} For any initial condition $(t,\mu)$ in $[0,T] \times \mathcal P(\R^d)$, for any decreasing sequence $(\varepsilon_k)_{k\ge 1} $ s.t. $\varepsilon_k \rightarrow  0 $, $\big(\brho^{\varepsilon_k}_{t,\mu}\big)_{k\ge 1} $ is a Cauchy sequence in \textcolor{black}{$L^{\bar r}(B_{p',q'}^{-\beta+ \vartheta \Gamma})\cap L^\infty\big((t,T],L^1\big)$}, for any $\vartheta\in \textcolor{black}{(0,1)} $ with $\bar r  $ as in \textcolor{black}{\eqref{cond_time_gencase}} and $\Gamma$ as in \eqref{GAP}. In particular, there exists $\brho_{t,\mu}$ in  $ L^{\bar r}(B_{p',q'}^{-\beta+ \vartheta \Gamma}) \cap \textcolor{black}{L^\infty\big((t,T],L^1\big)}$ such that
\begin{equation}\label{STRONG_CONV}
 |\brho^{\varepsilon_k}_{t,\mu}-\brho_{t,\mu}|_{L^{\bar r}(B_{p',q'}^{-\beta+\vartheta \Gamma})}+  \textcolor{black}{\sup_{s\in (t,T]}|(\brho^{\varepsilon_k}_{t,\mu}-\brho_{t,\mu})(s,\cdot)|_{L^1} }\underset{k}{\longrightarrow} 0.
\end{equation}
\end{lemme}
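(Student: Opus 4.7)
The plan is to subtract the Duhamel identities \eqref{main_MOLL} for the indices $k$ and $j$ and to derive coupled integral inequalities for the difference $\delta\brho^{k,j}:=\brho^{\varepsilon_k}_{t,\mu}-\brho^{\varepsilon_j}_{t,\mu}$ in both target norms simultaneously. The resulting identity reads
\[
\delta\brho^{k,j}(s,y) = -\int_t^s dv\,\bigl[\Delta^{k,j}(v,\cdot)\star \nabla p^\alpha_{s-v}\bigr](y),
\]
where $\Delta^{k,j}$ splits telescopically into the source
$\mathscr S^{k,j}:=\bigl[(b^{\varepsilon_k}-b^{\varepsilon_j})\star \brho^{\varepsilon_k}_{t,\mu}\bigr]\brho^{\varepsilon_k}_{t,\mu}$
and two self-terms
$\mathcal R_1^{k,j}:=\bigl[b^{\varepsilon_j}\star \delta\brho^{k,j}\bigr]\brho^{\varepsilon_k}_{t,\mu}$ and $\mathcal R_2^{k,j}:=\mathcal B^{\varepsilon_j}_{\brho^{\varepsilon_j}_{t,\mu}}\,\delta\brho^{k,j}$.

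For the Besov norm on the left I will apply Lemma \ref{LEMME_FGH} to each of the three contributions convolved with $\nabla p^\alpha_{s-v}$, reproducing the parametric choices of the proof of Lemma \ref{lem_unifesti_gencase_2} ($\ell=p'$, $m=q'$, $\ell_{g_2}=1$, $\ell_h=p'$, $m_h=1$). The dequadrification mechanism still applies because in each term one density factor can be placed in $L^1$: in $\mathcal R_1^{k,j}$ this is $\brho^{\varepsilon_k}_{t,\mu}$ (unit norm), producing a residual $|\delta\brho^{k,j}(v,\cdot)|_{B^{-\beta}_{p',q'}}\le C|\delta\brho^{k,j}(v,\cdot)|_{B^{-\beta+\vartheta\Gamma}_{p',q'}}$ via \eqref{BesovEmbedding}; in $\mathcal R_2^{k,j}$ this is $\delta\brho^{k,j}$ itself, producing a residual $|\delta\brho^{k,j}(v,\cdot)|_{L^1}$. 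For $\mathscr S^{k,j}$ one picks $\tilde\beta\in[\beta-\vartheta\Gamma,\beta)$ — a non-empty interval precisely because $\vartheta>0$ — so that $|\brho^{\varepsilon_k}_{t,\mu}|_{B^{-\tilde\beta}_{p',q'}}$ is controlled through \eqref{BesovEmbedding} and Lemma \ref{lem_unifesti_gencase_2}, while Proposition \ref{PROP_APPROX} provides $\eta_{k,j}:=|b^{\varepsilon_k}-b^{\varepsilon_j}|_{L^r(B^{\tilde\beta}_{p,q})}\to 0$.

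For the $L^\infty((t,T],L^1)$ part I take the $L^1$-norm in $y$ in the Duhamel identity and use $|\varphi\star \nabla p^\alpha_{s-v}|_{L^1}\le C(s-v)^{-1/\alpha}|\varphi|_{L^1}$, an integrable singularity since $\alpha>1$. On $L^1(\mathbb R^d)$ each contribution to $\Delta^{k,j}$ is estimated by factoring the convolved drift out in $L^\infty$ via the duality inequality \eqref{EQ_DUALITY}: the source yields $|\mathscr S^{k,j}(v,\cdot)|_{L^1}\le C|b^{\varepsilon_k}-b^{\varepsilon_j}|_{B^{\tilde\beta}_{p,q}}\,|\brho^{\varepsilon_k}_{t,\mu}|_{B^{-\tilde\beta}_{p',q'}}$, $\mathcal R_1^{k,j}$ yields $|\delta\brho^{k,j}(v,\cdot)|_{B^{-\beta}_{p',q'}}$ (again via \eqref{BesovEmbedding}), and $\mathcal R_2^{k,j}$ yields $|\delta\brho^{k,j}(v,\cdot)|_{L^1}$. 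Denoting $N_{k,j}(T):=\bigl\|\delta\brho^{k,j}\bigr\|_{L^{\bar r}((t,T],B^{-\beta+\vartheta\Gamma}_{p',q'})} + \sup_{s\in(t,T]}|\delta\brho^{k,j}(s,\cdot)|_{L^1}$, the same computation leading to \eqref{AFTER_FUB} in Lemma \ref{lem_unifesti_gencase_2} gives
\[
N_{k,j}(T)\le C\eta_{k,j}+C(T-t)^{\theta}N_{k,j}(T)
\]
with $\theta>0$; for $T-t$ sufficiently small the self-term is absorbed, and the iteration on consecutive subintervals (cf.~Section \ref{SEC_PROOF_WEAK}) promotes the estimate to the full interval, delivering \eqref{STRONG_CONV} and the existence of the limit by completeness.

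The main obstacle is the genuine coupling between the two target norms — the Besov estimate produces an $L^1$ residual through $\mathcal R_2^{k,j}$ and the $L^1$ estimate produces a Besov residual through $\mathcal R_1^{k,j}$ — which forces both convergences to be handled in tandem rather than sequentially. The restriction $\vartheta>0$ strict (whereas Lemma \ref{lem_unifesti_gencase_2} allowed $\vartheta=0$) is exactly what is needed to give ourselves room to lower $\beta$ to a Besov index $\tilde\beta<\beta$ at which the mollified drifts strongly converge via Proposition \ref{PROP_APPROX}, while the uniform a priori density bounds of Lemma \ref{lem_unifesti_gencase_2} remain available thanks to the embedding \eqref{BesovEmbedding}.
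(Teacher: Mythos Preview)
Your proof is correct and follows essentially the paper's route: subtract the two Duhamel representations, telescope the nonlinear term into a source controlled by Proposition~\ref{PROP_APPROX} (at a lowered regularity $\tilde\beta<\beta$, which is exactly why $\vartheta>0$ is required) plus two self-referential pieces, feed each through Lemma~\ref{LEMME_FGH} with the same parameter choices as in Lemma~\ref{lem_unifesti_gencase_2}, and close by absorption on a short interval. Your telescoping differs from the paper's (you split off $b^{\varepsilon_k}-b^{\varepsilon_j}$ first, the paper splits the densities first) but the two are equivalent.

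The one organizational difference worth noting is that the paper does \emph{not} bundle the two target norms into a single $N_{k,j}$ and close directly in $L^{\bar r}(B)\cap L^\infty(L^1)$. Instead it first closes in the \emph{weighted} space $\Lw^{r'}\bigl((t,S],B^{-\beta+\vartheta\Gamma}_{p',q'}\bigr)$ of~\eqref{DEF_L_B_WEIGHTED}: this is precisely the quantity that the $L^1$ estimate produces after a time-H\"older step, and it is controlled uniformly by~\eqref{LEMME_2_WITHOUT_SING_2}. Once the weighted Cauchy bound~\eqref{CAUCHY_BOUND_LP_B_W_SUP} is in hand, the $L^\infty(L^1)$ convergence follows from~\eqref{esti_inter_imp_conv}, and the upgrade to $L^{\bar r}$ is done separately afterwards. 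Your direct approach is more streamlined but hides a small constraint: in the $L^1$ inequality the $\mathcal R_1$-term yields $\bigl(\int_t^s (s-v)^{-r'/\alpha}|\delta\brho^{k,j}(v,\cdot)|^{r'}_{B}\,dv\bigr)^{1/r'}$, and bounding this by $|\delta\brho^{k,j}|_{L^{\bar r}(B)}$ via a further H\"older requires $\bar r>\alpha r'/(\alpha-r')$, which can fail near the left endpoint $\bar r=r'$. This is not a genuine obstruction---one proves the Cauchy property for a single $\bar r$ near the top of~\eqref{cond_time_gencase} and interpolates down on the finite time interval, or simply includes the $\Lw^{r'}$ norm in $N_{k,j}$---but it explains the paper's detour through the weighted norm.
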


\begin{proof}
Fix $k,j\in \mathbb N $ meant to be large. Assume w.l.o.g. that $k\ge j $.  From \eqref{main_MOLL}, for any $m\in \{k,j\}$, $ \brho^{\varepsilon_m}_t $ solves:
\begin{align*}
\brho^{\varepsilon_m}_{t,\mu}(s,y)  =  p^\alpha_{s-t}\star \mu(y)
-  \int_t^s dv \Big[\{\mathcal B_{\brho^{\varepsilon_m}_{t,\mu}}^{\varepsilon_m}(v,\cdot) \brho^{\varepsilon_m}_{t,\mu}(v,\cdot)\} \star \nabla p_{s-v}^{\alpha}\Big] (y),
\end{align*}
where we recall that, from Lemma \ref{lem_unifesti_gencase_2}, $\brho_{t,\mu}^{\varepsilon_m}\in L^{\bar r}(B_{p',q'}^{-\beta+\vartheta\Gamma}) $ with $\bar r $ as in \textcolor{black}{\eqref{cond_time_gencase}}. We have
\begin{eqnarray*}
 \brho^{\varepsilon_k}_{t,\mu}(s,y) -\brho^{\varepsilon_j}_{t,\mu}(s,y) &=&- \int_t^s dv \Big[\{{\mathcal B}_{\brho^{\varepsilon_k}_{t,\mu}}^{\varepsilon_k}(v,\cdot) \brho^{\varepsilon_k}_{t,\mu}(v,\cdot)-{\mathcal B}_{\brho^{\varepsilon_j}_{t,\mu}
}^{\varepsilon_j}(v,\cdot) \brho^{\varepsilon_j}_{t,\mu}(v,\cdot)\} \star \nabla p_{s-v}^{\alpha}\Big] (y).
\end{eqnarray*}
To show that $|\brho^{\varepsilon_k}_{t,\mu} -\brho^{\varepsilon_j}_{t,\mu}|_{L^{\bar r}(B_{p',q'}^{-\beta+\vartheta \Gamma})}$ is small, we will proceed 
somehow \textit{similarly} to the proof of Lemma \ref{lem_unifesti_gencase_2}. The convolution inequalities used therein (see e.g. Lemma \ref{LEMME_FGH} 
{\color{black}yielding to} \eqref{FOR_FURTHER_REF_2}) then naturally lead to estimate the $L^1$ norm of the difference $(\brho^{\varepsilon_k}_{t,\mu} -\brho^{\varepsilon_j}_{t,\mu})(v,\cdot)$ {\color{black}for} $v\in [t,s], \ s\in [t,T]$. We will use the same approach to upper-bound this latter quantity in terms of the appropriate spatial Besov norm. To this end, we need to state an estimate for the difference with generic parameters for the Besov spaces, in order to be able to get controls on both the $B_{p',q'}^{-\beta+\vartheta\Gamma} $ and the $L^1$ spatial norms (through the embeddings \eqref{EMBEDDING} for the latter) of the difference. Write for any $\gamma \ge 0$, $\ell,m \ge1 $:

\begin{eqnarray}\label{DEF_DELTA_RHO_RHO_KP}
| \brho^{\varepsilon_k}_{t,\mu}(s,\cdot) -\brho^{\varepsilon_j}_{t,\mu}(s,\cdot)|_{B^{\gamma}_{\ell,m}}
&\le &\int_t^s dv \Big|\{{\mathcal B}_{\brho^{\varepsilon_k}_{t,\mu}}^{\varepsilon_k}(v,\cdot) \brho^{\varepsilon_k}_{t,\mu}(v,\cdot)-{\mathcal B}_{\brho^{\varepsilon_k}_{t,\mu}}^{\varepsilon_k}(v,\cdot) \brho^{\varepsilon_j}_{t,\mu}(v,\cdot)\} \star \nabla p_{s-v}^{\alpha}\Big|_{B^{\gamma}_{\ell,m}} \notag\\
&&+\int_t^s dv \Big|\{{\mathcal B}_{\brho^{\varepsilon_k}_{t,\mu}}^{\varepsilon_k}(v,\cdot) \brho^{\varepsilon_j}_{t,\mu}(v,\cdot)-{\mathcal B}_{\brho^{\varepsilon_j}_{t,\mu}}^{\varepsilon_k}(v,\cdot) \brho^{\varepsilon_j}_{t,\mu}(v,\cdot)\} \star \nabla p_{s-v}^{\alpha}\Big|_{B^{\gamma}_{\ell,m}} \notag\\
&&+\int_t^s dv \Big|\{{\mathcal B}_{\brho^{\varepsilon_j}_{t,\mu}}^{\varepsilon_k}(v,\cdot) \brho^{\varepsilon_j}_{t,\mu}(v,\cdot)-{\mathcal B}_{\brho^{\varepsilon_j}_{t,\mu}}^{\varepsilon_j}(v,\cdot) \brho^{\varepsilon_j}_{t,\mu}(v,\cdot)\} \star \nabla p_{s-v}^{\alpha}\Big|_{B^{\gamma}_{\ell,m}} \notag\\
&=:& \int_t^s dv \Big[\Delta_{\gamma,\ell,m}^1(t,v) + \Delta_{\gamma,\ell,m}^2(t,v) + \Delta_{\gamma,\ell,m}^3(t,v)\Big].
\end{eqnarray}

Similarly to the proof of Lemma \ref{lem_unifesti_gencase_2}, applying Lemma \ref{LEMME_FGH} successively (and respectively) to the maps $f = b^{\varepsilon_k}(v,\cdot)$ (resp. $f = b^{\varepsilon_k}(v,\cdot)$, $f = (b^{\varepsilon_k}-b^{\varepsilon_j})(v,\cdot))$, $g_1 = \brho^{\varepsilon_k}_{t,\mu}(v,\cdot)$ (resp. $g_1 = (\brho^{\varepsilon_k}_{t,\mu}-\brho^{\varepsilon_j}_{t,\mu})(v,\cdot)$, $g_1 = \brho^{\varepsilon_j}_{t,\mu}(v,\cdot)$) and $g_2 =(\brho^{\varepsilon_k}_{t,\mu}-\brho^{\varepsilon_j}_{t,\mu})(v,\cdot)$ (resp. $\textcolor{black}{g_2 = \brho^{\varepsilon_j}_{t,\mu}(v,\cdot)}$) and $h=\nabla p_{s-v}^\alpha $ with $\gamma=-\beta+\vartheta\Gamma$, $ \ell=p'$, $ m=q'$; $\gamma_f=\beta$ \textcolor{black}{(and $\gamma_f=\beta-\vartheta \Gamma $ for $\Delta_{\gamma,\ell,m}^3 $)}, $\ell_f=p$ and $m_f=q$; $ \ell_{g_1} = p'$,  $m_{g_1}=q'$, $\ell_{g_2} = 1$ and $\ell_h = \ell$; $m_h=1$ (see also \eqref{FOR_FURTHER_REF_2}), we derive that:

\begin{eqnarray*}
\Delta_{-\beta+\vartheta\Gamma,p',q'}^1(t,v) &\le&  C|b^{\varepsilon_k}(v,\cdot) |_{B^{\beta}_{p,q}}
|\brho^{\varepsilon_k}_{t,\mu}(v,\cdot)|_{B^{-\beta}_{p',q'}} |(\brho^{\varepsilon_k}_{t,\mu}-\brho^{\varepsilon_j}_{t,\mu})(v,\cdot)|_{L^1}\big| \nabla p_{s-v}^{\alpha} \big|_{B^{-\beta+\vartheta\Gamma}_{p',1}},\\
\Delta_{-\beta+\vartheta\Gamma,p',q'}^2(t,v) &\le&  C|b^{\varepsilon_k}(v,\cdot) |_{B^{\beta}_{p,q}} |(\brho^{\varepsilon_k}_{t,\mu}-\brho^{\varepsilon_j}_{t,\mu})(v,\cdot)|_{B^{-\beta}_{p',q'}}  |\brho^{\varepsilon_k}_{t,\mu}(v,\cdot)|_{L^1} \big|\nabla p_{s-v}^{\alpha} \big|_{B^{-\beta+\vartheta\Gamma}_{p',1}},\\
\Delta_{-\beta+\vartheta\Gamma,p',q'}^3(t,v) &\le&  C|( b^{\varepsilon_k}- b^{\varepsilon_j})(v,\cdot) |_{B^{\textcolor{black}{\beta-\vartheta\Gamma}}_{p,q}} |\brho^{\varepsilon_j}_{t,\mu}(v,\cdot)|_{B^{\textcolor{black}{-\beta+\vartheta\Gamma}}_{p',q'}}  |\brho^{\varepsilon_k}_{t,\mu}(v,\cdot)|_{L^1} \big|\nabla p_{s-v}^{\alpha} \big|_{B^{-\beta+\vartheta\Gamma}_{p',1}}.
\end{eqnarray*}
Thus, \eqref{SING_STABLE_HK} together with \eqref{BesovEmbedding} {\color{black}(which notably gives: $B^{-\beta+\vartheta\Gamma}_{p',q'}\hookrightarrow B^{-\beta}_{p',q'} \ $)} and \eqref{DEF_DELTA_RHO_RHO_KP} for the above {\color{black}choice of the} parameters {\color{black}$\gamma,\ell,m$,} give

\begin{eqnarray}\label{eq:inter_stabi11}
&& |\brho^{\varepsilon_k}_{t,\mu}(s,\cdot) -\brho^{\varepsilon_j}_{t,\mu}(s,\cdot)|_{B^{-\beta+\vartheta\Gamma}_{p',q'}} \\ 
&\le& C\int_t^s \frac{dv}{(s-v)^{\frac{-\beta+\vartheta\Gamma+1}{\alpha} + \frac d{\alpha p}}} \Big\{ |b^{\varepsilon_k}(v,\cdot)|_{B^{\beta}_{p,q}} \textcolor{black}{|\brho^{\varepsilon_k}_{t,\mu}(v,\cdot)|_{B^{-\beta+\vartheta\Gamma}_{p',q'}}}|(\brho^{\varepsilon_k}_{t,\mu}-\brho^{\varepsilon_j}_{t,\mu})(v,\cdot)|_{L^1}\notag \\
&&\quad + |b^{\varepsilon_k}(v,\cdot)|_{B^{\beta}_{p,q}} |(\brho^{\varepsilon_k}_{t,\mu}-\brho^{\varepsilon_j}_{t,\mu})(v,\cdot)|_{B^{-\beta+\vartheta\Gamma}_{p',q'}} + |(b^{\varepsilon_k}-b^{\varepsilon_j})(v,\cdot)|_{B^{\textcolor{black}{\beta-\vartheta \Gamma}}_{p,q}}|\brho^{\varepsilon_j}_{t,\mu}(v,\cdot)|_{B^{-\beta+\vartheta\Gamma}_{p',q'}}\Big\}.\notag
\end{eqnarray}
To estimate the  $L^1$ norm of the above difference,  
{\color{black} come back to \eqref{DEF_DELTA_RHO_RHO_KP}, take now $\gamma = 0$, $\ell=m=1$}
and apply Lemma \ref{LEMME_FGH} with  
{\color{black}the \textcolor{black}{following} choice \textcolor{black}{of} parameters :} $\gamma_f=\beta$, $\ell_f=p$ and $m_f=q$; $ \ell_{g_1} = p'$,  $m_{g_1}=q', \ell_{g_2} = 1,$ and $\ell_h = \ell\textcolor{black}{=1}$, $m_h=1$.  Use \textcolor{black}{then} the embedding  \eqref{EMBEDDING} {\color{black}and \eqref{SING_STABLE_HK}} to obtain from the $L^1 : L^{r}-L^{r'}$ \textcolor{black}{and $L^1 : L^{\check r}-L^{\check r'}$}-H\"older \textcolor{black}{inequalities} that, for all $\textcolor{black}{s\in (t,T]}$,

\begin{eqnarray*}
 |\brho^{\varepsilon_k}_{t,\mu}(s,\cdot) -\brho^{\varepsilon_j}_{t,\mu}(s,\cdot)|_{L^1}  &\underset{\eqref{EMBEDDING}}{\le} &C\  \textcolor{black}{|\brho^{\varepsilon_k}_{t,\mu}(s,\cdot) -\brho^{\varepsilon_j}_{t,\mu}(s,\cdot)|_{B_{1,1}^0}}\\
&\le&
 C\int_t^s dv \textcolor{black}{|\nabla p_{s-v}^\alpha|_{B_{1,1}^0}} \Big\{ |b^{\varepsilon_k}(v,\cdot)|_{B^{\beta}_{p,q}} \textcolor{black}{|\brho^{\varepsilon_k}_{t,\mu}(v,\cdot)|_{B^{-\beta+\vartheta\Gamma}_{p',q'}}}|(\brho^{\varepsilon_k}_{t,\mu}-\brho^{\varepsilon_j}_{t,\mu})(v,\cdot)|_{L^1}\notag \\
&&\quad + |b^{\varepsilon_k}(v,\cdot)|_{B^{\beta}_{p,q}} |(\brho^{\varepsilon_k}_{t,\mu}-\brho^{\varepsilon_j}_{t,\mu})(v,\cdot)|_{B^{-\beta+\vartheta\Gamma}_{p',q'}} \\
&&+ |(b^{\varepsilon_k}-b^{\varepsilon_j})(v,\cdot)|_{B^{\textcolor{black}{\beta-\vartheta \Gamma}}_{p,q}}|\brho^{\varepsilon_j}_{t,\mu}(v,\cdot)|_{B^{-\beta+\vartheta\Gamma}_{p',q'}}\Big\}\\
 &\textcolor{black}{\underset{\eqref{SING_STABLE_HK}}{\le}}& C \Bigg( \int_t^s\frac{dv}{(s-v)^{\frac{r'}{\alpha}}} \textcolor{black}{|\brho^{\varepsilon_k}_{t,\mu}(v,\cdot)|_{B^{-\beta+\vartheta\Gamma}_{p',q'}}^{r'}} \Bigg)^{\frac 1{r'}}  |b^{\varepsilon_k}|_{L^r(B^{\beta}_{p,q})} \sup_{v \in (t,s]}\big\{|(\brho^{\varepsilon_k}_{t,\mu}-\brho^{\varepsilon_j}_{t,\mu})(v,\cdot)|_{L^1}\big\}\\
&& + C  |b^{\varepsilon_k}|_{L^r(B^{\beta}_{p,q})} \Bigg( \int_t^s\frac{dv}{(s-v)^{\frac{r'}{\alpha}}} |(\brho^{\varepsilon_k}_{t,\mu}-\brho^{\varepsilon_j}_{t,\mu})(v,\cdot)|_{B^{-\beta+\vartheta\Gamma}_{p',q'}}^{r'} \Bigg)^{\frac 1{r'}}  \\
&&+ C |(b^{\varepsilon_k}-b^{\varepsilon_j})|_{L^{\textcolor{black}{\check r}}(B^{\textcolor{black}{\beta-\vartheta\Gamma}}_{p,q})}  \Bigg( \int_t^s\frac{dv}{(s-v)^{\frac{\textcolor{black}{\check r'}}{\alpha}}} |\brho^{\varepsilon_j}_{t,\mu}(v,\cdot)|_{B^{-\beta+\vartheta\Gamma}_{p',q'}}^{r'} \Bigg)^{\frac 1{\textcolor{black}{\check r'}}},
\end{eqnarray*}
with $\check r=r $ if $r<+\infty$ and any finite $\check r$ large enough if $r=+\infty $, i.e. the conjugate exponent $\check r' $ belongs to the interval indicated after \eqref{LEMME_2_WITHOUT_SING_2}. We insist that this additional step is needed when $r=+\infty $ to use Proposition \ref{PROP_APPROX} which gives the convergence of the \textcolor{black}{mollified interaction} kernel.

Taking the supremum in $s$ on $(t,S]$ for some $S\le T$ on both sides {\color{black} and recalling the notation \eqref{DEF_L_B_WEIGHTED}, 
 we obtain 
 }
\begin{eqnarray*}
&&\sup_{s\in (t,S]}\big\{|\brho^{\varepsilon_k}_{t,\mu}(s,\cdot) -\brho^{\varepsilon_j}_{t,\mu}(s,\cdot)|_{L^1}\big\}\\
&\le&  C\sup_{s\in (t,S]} \textcolor{black}{|\brho^{\varepsilon_k}_{t,\mu}|_{\textcolor{black}{\Lw^{r'}}\big((t,s],B_{p',q'}^{-\beta+\vartheta\Gamma}\big)}}  
|b^{\varepsilon_k}|_{L^r(B^{\beta}_{p,q})} \sup_{s \in (t,S]}\big\{|(\brho^{\varepsilon_k}_{t,\mu}-\brho^{\varepsilon_j}_{t,\mu})(s,\cdot)|_{L^1}\big\}
\\
&&+C  |b^{\varepsilon_k}|_{L^r(B^{\beta}_{p,q})}\sup_{s\in (t,S]}  |(\brho^{\varepsilon_k}_{t,\mu}-\brho^{\varepsilon_j}_{t,\mu})|_{{\color{black}\Lw^{r'}\big((t,s],B_{p',q'}^{-\beta+\vartheta\Gamma}\big)}}  \\
&&+ C |(b^{\varepsilon_k}-b^{\varepsilon_j})|_{L^{{\check r}}(B^{\textcolor{black}{\beta-\vartheta\Gamma}}_{p,q})}  \sup_{s\in (t,S]} |\brho^{\varepsilon_j}|_{{\color{black}\Lw^{\textcolor{black}{\check r'}}\big((t,s],B_{p',q'}^{-\beta+\vartheta\Gamma}\big)}}.
\end{eqnarray*}
Thus, from \eqref{LEMME_2_WITHOUT_SING_2} for $T$ such that $|b|_{L^r(B_{p,q}^\beta)}(\textcolor{black}{T-t})^{\delta/{r'}}<\textcolor{black}{\cc^{-1}} $, $ \delta=\check \delta(r')$ and Proposition \ref{PROP_APPROX}, we obtain
\begin{eqnarray}\label{esti_inter_imp_conv}
\sup_{s\in (t,S]}\big\{|\brho^{\varepsilon_k}_{t,\mu}(s,\cdot) -\brho^{\varepsilon_j}_{t,\mu}(s,\cdot)|_{L^1}\big\}\notag &\le& 
C  |b^{\varepsilon_k}|_{L^r(B^{\beta}_{p,q})} \sup_{s\in (t,S ]}|(\brho^{\varepsilon_k}_{t,\mu}-\brho^{\varepsilon_j}_{t,\mu})|_{{\color{black}\Lw^{r'}\big((t,s],B_{p',q'}^{-\beta+\vartheta\Gamma}\big)}}   \\
&&+ C (S-t)^{\textcolor{black}{\frac{\check  \delta}{\check r'}}} |(b^{\varepsilon_k}-b^{\varepsilon_j})|_{L^{{\check r}}(B^{\textcolor{black}{\beta-\vartheta \Gamma}}_{p,q})} .
\end{eqnarray}
{\color{black}\textcolor{black}{The above control provides} the estimate for the running norm $s\mapsto \sup_{v\in (t,s]}\big\{|\brho^{\varepsilon_k}_{t,\mu}(v,\cdot) -\brho^{\varepsilon_j}_{t,\mu}(v,\cdot)|_{L^1}$,} \textcolor{black}{which} plugged into \eqref{eq:inter_stabi11}  \textcolor{black}{yields}
\begin{eqnarray}
 &&|\brho^{\varepsilon_k}_{t,\mu}(s,\cdot) -\brho^{\varepsilon_j}_{t,\mu}(s,\cdot)|_{B^{-\beta+\vartheta\Gamma}_{p',q'}} \notag\\
&\le& C\bigg\{|b^{\varepsilon_k}|_{L^r(B^{\beta}_{p,q})}
 \sup_{v\in (t,s]} |(\brho^{\varepsilon_k}_{t,\mu}-\brho^{\varepsilon_j}_{t,\mu})|_{{\color{black}\Lw^{r'}\big((t,v],B_{p',q'}^{-\beta+\vartheta\Gamma}\big)}} + (s-t)^{\textcolor{black}{\frac{ \check \delta}{\check r'}}}|(b^{\varepsilon_k}-b^{\varepsilon_j})|_{L^{\textcolor{black}{\check r}}(B^{\textcolor{black}{\beta-\vartheta\Gamma}}_{p,q})}  \bigg\}\notag\\
&&\quad \times\int_t^s \frac{dv}{(s-v)^{\frac{-\beta+\vartheta\Gamma+1}{\alpha} + \frac d{\alpha p}}}\bigg\{  |b^{\varepsilon_k}(v,\cdot)|_{B^{\beta}_{p,q}}|\brho^{\varepsilon_k}_{t,\mu}(v,\cdot)|_{B^{-\beta+\vartheta\Gamma}_{p',q'}} \notag\bigg\} \notag\\
&& +C\int_t^s \frac{dv}{(s-v)^{\frac{-\beta+\vartheta\Gamma+1}{\alpha} + \frac d{\alpha p}}} \bigg\{|b^{\varepsilon_k}(v,\cdot)|_{B^{\beta}_{p,q}} |(\brho^{\varepsilon_k}_{t,\mu}-\brho^{\varepsilon_j}_{t,\mu})(v,\cdot)|_{B^{-\beta+\vartheta\Gamma}_{p',q'}} \notag\\
&&\hspace*{1cm}+ |(b^{\varepsilon_k}-b^{\varepsilon_j})(v,\cdot)|_{B^{\textcolor{black}{\beta-\vartheta\Gamma}}_{p,q}}|\brho^{\varepsilon_j}_{t,\mu}(v,\cdot)|_{B^{-\beta+\vartheta\Gamma}_{p',q'}}\bigg\}.
\label{PRIMER_FOR_INT_STAB}
\end{eqnarray}
{\color{black}At this stage, we can proceed similarly as in the proof of Lemma \ref{lem_unifesti_gencase_2}, leaving the component factored by $|(b^{\varepsilon_k}-b^{\varepsilon_j})|_{L^{\textcolor{black}{\check r}}(B^{\textcolor{black}{\beta-\vartheta\Gamma}}_{p,q})}$ as a source term.}
 \textcolor{black}{Use} first
the $L^1:L^{r}-L^{r'}$-H\"older inequality for the first \textcolor{black}{two terms} in the above \textcolor{black}{integrals} and the $L^1:L^{\check r}-L^{\check r'}$ for the last \textcolor{black}{integral} term. \textcolor{black}{Multiply} then both sides by $(S-s)^{-1/\alpha}$ for some $s< S\le T$. \textcolor{black}{Take} next the $(r')^{{\rm th}}$ power,  \textcolor{black}{use consequently} a convexity inequality in the resulting expression and finally integrate the whole on $(t,S]$.  \textcolor{black}{We} eventually obtain
\begin{eqnarray*}
 && \int_t^S
 \frac{ds}{(S-s)^{\frac{r'}\alpha}}|\brho^{\varepsilon_k}_{t,\mu} (s,\cdot) -\brho^{\varepsilon_j}_{t,\mu}(s,\cdot)|_{B^{-\beta+\vartheta\Gamma}_{p',q'}}^{r'} \\
 &\le& C
 \bigg\{|b^{\varepsilon_k}|_{L^r(B^{\beta}_{p,q})}^{r'} \sup_{v\in (t,S]}|(\brho^{\varepsilon_k}_{t,\mu}-\brho^{\varepsilon_j}_{t,\mu})|_{{\color{black}\Lw^{r'}\big((t,v],B_{p',q'}^{-\beta+\vartheta\Gamma}\big)}}^{r'} + (S-t)^{\textcolor{black}{\frac{\check \delta r'}{\check r'}}}  |(b^{\varepsilon_k}-b^{\varepsilon_j})|_{L^{\textcolor{black}{\check r}}(B^{\textcolor{black}{\beta-\vartheta\Gamma}}_{p,q})}^{r'}  \bigg\}\\
&&\qquad \times\int^S_t \frac{ds}{(S-s)^{\frac{r'}\alpha}}\int_t^s \frac{dv}{(s-v)^{r'\big(\frac{-\beta+\vartheta\Gamma+1}{\alpha} + \frac d{\alpha p}\big)}}\bigg\{  |\brho^{\varepsilon_k}_{t,\mu}(v,\cdot)|_{B^{-\beta+\vartheta\Gamma}_{p',q'}}^{r'}\textcolor{black}{|b^{\varepsilon_k}|_{L^r(B_{p,q}^\beta)}^{r'}} \notag\bigg\} \\
&& +C\int^S_t \frac{ds}{(S-s)^{\frac{r'}\alpha}}\textcolor{black}{\Bigg[}\int_t^s \frac{dv}{(s-v)^{r'\big(\frac{-\beta+\vartheta\Gamma+1}{\alpha} + \frac d{\alpha p}\big)}}
|b^{\varepsilon_k}|_{L^r(B^{\beta}_{p,q})}^{r'} |(\brho^{\varepsilon_k}_{t,\mu}-\brho^{\varepsilon_j}_{t,\mu})(v,\cdot)|_{B^{-\beta+\vartheta\Gamma}_{p',q'}}^{r'}\\
 &&+\textcolor{black}{|(b^{\varepsilon_k}-b^{\varepsilon_j})|_{L^{\textcolor{black}{\check r}}(B^{\textcolor{black}{\beta-\vartheta\Gamma}}_{p,q})}^{r'} \bigg\{\int_t^s \frac{dv}{(s-v)^{\check r'\big(\frac{-\beta+\vartheta\Gamma+1}{\alpha} + \frac d{\alpha p}\big)}} |\brho^{\varepsilon_j}_{t,\mu}(v,\cdot)|_{B^{-\beta+\vartheta\Gamma}_{p',q'}}^{\check r'}\bigg\}^{\frac{r'}{\check r'}}}\textcolor{black}{\Bigg]}%
 .\notag
\end{eqnarray*}
From the Fubini theorem {\color{black}and} using 
{\color{black}again} the $L^1: L^{\check r}-L^{\check r'}$ inequality for the last contribution if $r'=1<\check r' $ ( 
{\color{black}$\Leftrightarrow$} $r=\infty,\ \check r<+\infty $), we now derive:
\begin{eqnarray*}
 && \int_t^S
 \frac{ds}{(S-s)^{\frac{r'}\alpha}}|\brho^{\varepsilon_k}_{t,\mu} (s,\cdot) -\brho^{\varepsilon_j}_{t,\mu}(s,\cdot)|_{B^{-\beta+\vartheta\Gamma}_{p',q'}}^{r'}\\
 &\le& C\bigg\{|b^{\varepsilon_k}|_{L^r(B^{\beta}_{p,q})}^{r'}  \sup_{v\in (t,S]}|(\brho^{\varepsilon_k}_{t,\mu}-\brho^{\varepsilon_j}_{t,\mu})|_{ 
 {\color{black}\Lw^{r'}\big((t,v],B_{p',q'}^{-\beta+\vartheta\Gamma}\big)}}^{r'}+ (S-t)^{\textcolor{black}{ \frac{\check \delta r'}{\check r'}}} |(b^{\varepsilon_k}-b^{\varepsilon_j})|_{L^{\textcolor{black}{\check r}}(B^{\textcolor{black}{\beta-\vartheta\Gamma}}_{p,q})}^{r'} \bigg\}\\
&&\qquad \times\int_t^Sdv \int_v^S \frac{ds}{(S-s)^{\frac{r'}\alpha} (s-v)^{r'\big(\frac{-\beta+\vartheta\Gamma+1}{\alpha} + \frac d{\alpha p}\big)}}\bigg\{  |\brho^{\varepsilon_k}_{t,\mu}(v,\cdot)|_{B^{-\beta+\vartheta\Gamma}_{p',q'}}^{r'} \textcolor{black}{|b^{\varepsilon_k}|_{L^r(B_{p,q}^\beta)}^{r'}}\notag\bigg\} \\
&& +C\textcolor{black}{\Bigg[}\int_t^S dv \int_v^S \frac{ds}{(S-s)^{\frac{r'}\alpha}(s-v)^{r'\big(\frac{-\beta+\vartheta\Gamma+1}{\alpha} + \frac d{\alpha p}\big)}}|b^{\varepsilon_k}|_{L^r(B^{\beta}_{p,q})}^{r'} |(\brho^{\varepsilon_k}_{t,\mu}-\brho^{\varepsilon_j}_{t,\mu})(v,\cdot)|_{B^{-\beta+\vartheta\Gamma}_{p',q'}}^{r'}\\
&& + |(b^{\varepsilon_k}-b^{\varepsilon_j})|_{L^{\textcolor{black}{\check r}}(B^{\textcolor{black}{\beta-\vartheta\Gamma}}_{p,q})}^{r'}\textcolor{black}{(S-t)^{\frac {\I_{r=\infty}}{\check r}}}\int_t^Sdv \int_v^S \frac{ds}{(S-s)^{\frac{\textcolor{black}{\check r'}}\alpha}(s-v)^{\textcolor{black}{\check r'}\big(\frac{-\beta+\vartheta\Gamma+1}{\alpha} + \frac d{\alpha p}\big)}} |\brho^{\varepsilon_j}_{t,\mu}(v,\cdot)|_{B^{-\beta+\vartheta\Gamma}_{p',q'}}^{\textcolor{black}{\check r'}}\textcolor{black}{\Bigg]}\notag\\
 &\le& C\bigg\{|b^{\varepsilon_k}|_{L^r(B^{\beta}_{p,q})}^{r'}  \sup_{v\in (t,S]}|(\brho^{\varepsilon_k}_{t,\mu}-\brho^{\varepsilon_j}_{t,\mu})|_{{\color{black}\Lw^{r'}\big((t,v],B_{p',q'}^{-\beta+\vartheta\Gamma}\big)}}^{r'}+ (S-t)^{ \textcolor{black}{\frac{\check \delta r'}{\check r'}}} |(b^{\varepsilon_k}-b^{\varepsilon_j})|_{L^{\textcolor{black}{\check r}}(B^{\textcolor{black}{\beta-\vartheta\Gamma}}_{p,q})}^{r'} \bigg\}\\
&&\qquad \times\int_t^S\frac{dv}{(S-v)^{\frac{r'}\alpha+r'\big(\frac{-\beta+\vartheta\Gamma+1}{\alpha} + \frac d{\alpha p}\big)-1}}\bigg\{  |\brho^{\varepsilon_k}_{t,\mu}(v,\cdot)|_{B^{-\beta+\vartheta\Gamma}_{p',q'}}^{r'} \textcolor{black}{|b^{\varepsilon_k}|_{L^r(B_{p,q}^\beta)}^{r'}}\notag\bigg\} \\
&& +C\textcolor{black}{\Bigg[}\int_t^S \frac{dv}{ (S-v)^{\frac{r'}\alpha+r'\big(\frac{-\beta+\vartheta\Gamma+1}{\alpha} + \frac d{\alpha p}\big)-1}} 
 |b^{\varepsilon_k}|_{L^r(B^{\beta}_{p,q})}^{r'} |(\brho^{\varepsilon_k}_{t,\mu}-\brho^{\varepsilon_j}_{t,\mu})(v,\cdot)|_{B^{-\beta+\vartheta\Gamma}_{p',q'}}^{r'} \\
&&+ |(b^{\varepsilon_k}-b^{\varepsilon_j})|_{L^{\textcolor{black}{\check r}}(B^{\textcolor{black}{\beta-\vartheta\Gamma}}_{p,q})}^{r'}\textcolor{black}{(S-t)^{\frac {\I_{r=\infty}}{\check r}}}\int_t^S \frac{dv}{ (S-v)^{\frac{\textcolor{black}{\check r'}}\alpha+\textcolor{black}{\check r'}\big(\frac{-\beta+\vartheta\Gamma+1}{\alpha} + \frac d{\alpha p}\big)-1}}|\brho^{\varepsilon_j}_{t,\mu}(v,\cdot)|_{B^{-\beta+\vartheta\Gamma}_{p',q'}}^{\textcolor{black}{\check r'}}\textcolor{black}{\Bigg]}\notag\\
 &\le& C \bigg\{|b^{\varepsilon_k}|_{L^r(B^{\beta}_{p,q})}^{r'} \sup_{v\in (t,T]}|(\brho^{\varepsilon_k}_{t,\mu}-\brho^{\varepsilon_j}_{t,\mu})|_{{\color{black}\Lw^{r'}\big((t,v],B_{p',q'}^{-\beta+\vartheta\Gamma}\big)}}^{r'} + (S-t)^{ \textcolor{black}{\frac{\check \delta r'}{\check r'}}} |(b^{\varepsilon_k}-b^{\varepsilon_j})|_{L^{\textcolor{black}{\check r}}(B^{\textcolor{black}{\beta-\vartheta\Gamma}}_{p,q})}^{r'} \bigg\} (S-t)^{ \textcolor{black}{\delta}} \textcolor{black}{|b^{\varepsilon_k}|_{L^r(B_{p,q}^\beta)}^{r'}}\\
&&+ \bigg\{|b^{\varepsilon_k}|_{L^r(B^{\beta}_{p,q})}^{r'} \sup_{v\in (t,T]}|(\brho^{\varepsilon_k}_{t,\mu}-\brho^{\varepsilon_j}_{t,\mu})|_{{\color{black}\Lw^{r'}\big((t,v],B_{p',q'}^{-\beta+\vartheta\Gamma}\big)}}^{r'} \textcolor{black}{(S-t)^{ \delta}}+ |(b^{\varepsilon_k}-b^{\varepsilon_j})|_{L^{\textcolor{black}{\check r}}(B^{\textcolor{black}{\beta-\vartheta\Gamma}}_{p,q})}^{r'}\textcolor{black}{(S-t)^{ \textcolor{black}{\check \delta}+\frac{\I_{r=\infty}}{\check r}}}\bigg\}\notag,
\end{eqnarray*}
where, for the last inequality, we use {\color{black}the uniform controls on $b^{\varepsilon}$ and $\brho^{\varepsilon}_{t,\mu}$ given by Proposition \ref{PROP_APPROX} and} \eqref{LEMME_2_WITHOUT_SING_2} {\color{black}in Lemma \ref{lem_unifesti_gencase_2}}, and the fact that, from  \eqref{DEF_DELTA} and \eqref{DEF_DELTA_CHECK}: 
$$-\delta=r'\bigg(\frac{-\beta+\vartheta\Gamma+1}{\alpha} + \frac d{\alpha p}\bigg)-1\textcolor{black}{<} 0,\ \textcolor{black}{-\check \delta=\check r'\bigg(\frac{-\beta+\vartheta\Gamma+1}{\alpha} + \frac d{\alpha p}\bigg)-1\textcolor{black}{<}0}$$ 
{\color{black}(the contribution \textcolor{black}{in} \textcolor{black}{$ 1/{\check r}$} in the last power only arises when $r=\infty$).}
Taking now the supremum over  $S\in (t,T]${\color{black}, we recover on the left-hand side above \textcolor{black}{the quantity} $\sup_{v\in (t,T]}|(\brho^{\varepsilon_k}_{t,\mu}-\brho^{\varepsilon_j}_{t,\mu})|_{{\color{black}\Lw^{r'}\big((t,v],B_{p',q'}^{-\beta+\vartheta\Gamma}\big)}}^{r'}$} and for $T>t\ge 0$ small enough, we deduce that t\textcolor{black}{here exists $C\ge 1$ s.t. }
\begin{eqnarray}\label{CAUCHY_BOUND_LP_B_W_SUP}
\sup_{v\in (t,T]}|(\brho^{\varepsilon_k}_{t,\mu}-\brho^{\varepsilon_j}_{t,\mu})|_{{\color{black}\Lw^{r'}\big((t,v],B_{p',q'}^{-\beta+\vartheta\Gamma}\big)}}^{r'} \le \textcolor{black}{C} |(b^{\varepsilon_k}-b^{\varepsilon_j})|_{L^{\textcolor{black}{\check r}}(B^{\textcolor{black}{\beta-\vartheta \Gamma}}_{p,q})}^{r'}.
\end{eqnarray}
As $ |\brho^{\varepsilon_k}_{t,\mu} -\brho^{\varepsilon_j}_{t,\mu}|_{L^{\textcolor{black}{r'}}\big((t,T],B_{p',q'}^{-\beta+\vartheta\Gamma}\big)}^{r'} \le  \sup_{v\in (t,T]}|(\brho^{\varepsilon_k}_{t,\mu}-\brho^{\varepsilon_j}_{t,\mu})|_{{\color{black}\Lw^{r'}\big((t,v],B_{p',q'}^{-\beta+\vartheta\Gamma}\big)}}^{r'}$ and $L^{r'}\big((t,T],B^{-\beta+\vartheta\Gamma}_{p',q'}\big)$ is complete we have, from Proposition \ref{PROP_APPROX},  that the convergence of $(\brho_{t,\mu}^{\varepsilon_k})_{k \ge 1}$ holds in $L^{r'}\big((t,T],B^{-\beta+\vartheta\Gamma}_{p',q'}\big)$ in a strong sense. The strong convergence in $L^{\infty}(L^1)$ follows from  \eqref{esti_inter_imp_conv}, again Proposition \ref{PROP_APPROX} and \eqref{CAUCHY_BOUND_LP_B_W_SUP}. The proof of \eqref{STRONG_CONV} is thus complete for $\bar r=r'$.\\

Let us now  discuss how we can improve the integrability exponent {\color{black}$\bar r$} for the convergence to prove \eqref{STRONG_CONV} in full generality. Restarting from \eqref{PRIMER_FOR_INT_STAB}, write first for $\bar r $ satisfying \textcolor{black}{\eqref{cond_time_gencase}},
\begin{eqnarray*}
 &&\int_t^T |\brho^{\varepsilon_k}_{t,\mu}(s,\cdot) -\brho^{\varepsilon_j}_{t,\mu}(s,\cdot)|_{B^{-\beta+\vartheta\Gamma}_{p',q'}}^{\bar r}ds \\
 &\le& C\bigg\{|b^{\varepsilon_k}|_{L^r(B^{\beta}_{p,q})}^{\bar r}\sup_{v\in (t,T]} |(\brho^{\varepsilon_k}_{t,\mu}-\brho^{\varepsilon_j}_{t,\mu})|_{{\color{black}\Lw^{r'}\big((t,v],B_{p',q'}^{-\beta+\vartheta\Gamma}\big)}}^{\bar r} + (T-t)^{\textcolor{black}{\frac{ \check\delta \bar r}{\check r'}}}|(b^{\varepsilon_k}-b^{\varepsilon_j})|_{L^{\textcolor{black}{\check r}}(B^{\beta}_{p,q})}^{\bar r}  \bigg\}\\
&&\quad \times|b^{\varepsilon_k}|_{L^r(B^{\beta}_{p,q})}^{\bar r}\int_t^T ds  \Big(\int_t^s \frac{dv}{(s-v)^{r'(\frac{-\beta+\vartheta\Gamma+1}{\alpha} + \frac d{\alpha p})}}| \brho^{\varepsilon_k}_{t,\mu}(v,\cdot)|_{B^{-\beta+\vartheta\Gamma}_{p',q'}}^{r'} \notag\Big)^{\frac{\bar r}{r'}} \\
&& +C|b^{\varepsilon_k}|_{L^r(B^{\beta}_{p,q})}^{\bar r }\int_t^T ds \Big(\int_t^s \frac{dv}{(s-v)^{r'(\frac{-\beta+\vartheta\Gamma+1}{\alpha} + \frac d{\alpha p})}}  |(\brho^{\varepsilon_k}_{t,\mu}-\brho^{\varepsilon_j}_{t,\mu})(v,\cdot)|_{B^{-\beta+\vartheta\Gamma}_{p',q'}}^{r'}\Big)^{\frac{\bar r}{r'}} \\
&&+ C|(b^{\varepsilon_k}-b^{\varepsilon_j})|_{L^{\textcolor{black}{\check r}}(B^{\textcolor{black}{\beta-\vartheta \Gamma}}_{p,q})}^{\bar r}\int_t^T ds \Big(\int_t^s \frac{dv}{(s-v)^{\textcolor{black}{\check r'}(\frac{-\beta+\vartheta\Gamma+1}{\alpha} + \frac d{\alpha p})}}  |\brho^{\varepsilon_j}_{t,\mu}(v,\cdot)|_{B^{-\beta+\vartheta\Gamma}_{p',q'}}^{\textcolor{black}{\check r'}}\Big)^{\frac{\bar r}{\textcolor{black}{\check r'}}}.\notag
\end{eqnarray*}
We now conclude as in the proof of Lemma \ref{lem_unifesti_gencase_2}, through again an additional H\"older inequality and the Fubini theorem (see 
{\color{black}the procedure between \eqref{AFTER_CONV_2} and \eqref{AFTER_FUB}}). Namely,
\begin{eqnarray*}
 &&\int_t^T |\brho^{\varepsilon_k}_{t,\mu}(s,\cdot) -\brho^{\varepsilon_j}_{t,\mu}(s,\cdot)|_{B^{-\beta+\vartheta\Gamma}_{p',q'}}^{\bar r}ds \\
 &\le& C\bigg\{|b^{\varepsilon_k}|_{L^r(B^{\beta}_{p,q})}^{\bar r}\sup_{v\in (t,T]} |(\brho^{\varepsilon_k}_{t,\mu}-\brho^{\varepsilon_j}_{t,\mu})|_{{\color{black}\Lw^{r'}\big((t,v],B_{p',q'}^{-\beta+\vartheta\Gamma}\big)}}^{\bar r} + (T-t)^{\textcolor{black}{\frac{ \check \delta \bar r}{\check r'}}}|(b^{\varepsilon_k}-b^{\varepsilon_j})|_{L^{\textcolor{black}{\check r}}(B^{\beta}_{p,q})}^{\bar r}  \bigg\}\\
&&\quad \times (T-t)^{\delta\frac{\bar r}{r'}}|b^{\varepsilon_k}|_{L^r(B^{\beta}_{p,q})}^{\bar r}\int_t^T dv |\brho^{\varepsilon_k}_{t,\mu}(v,\cdot)|_{B^{-\beta+\vartheta\Gamma}_{p',q'}}^{\bar r} \notag \\
&& +C\Bigg\{(T-t)^{\delta\frac{\bar r}{r'}}|b^{\varepsilon_k}|_{L^r(B^{\beta}_{p,q})}^{\bar r }\int_t^T dv  |(\brho^{\varepsilon_k}_{t,\mu}-\brho^{\varepsilon_j}_{t,\mu})(v,\cdot)|_{B^{-\beta+\vartheta\Gamma}_{p',q'}}^{\bar r} \\
&&+ \textcolor{black}{(T-t)^{\check \delta\frac{\bar r}{\check r'}}}|(b^{\varepsilon_k}-b^{\varepsilon_j})|_{L^{\textcolor{black}{\check r}}(B^{\textcolor{black}{\beta-\vartheta \Gamma}}_{p,q})}^{\bar r}\int_t^T \textcolor{black}{dv} |\brho^{\varepsilon_j}_{t,\mu}(v,\cdot)|_{B^{-\beta+\vartheta\Gamma}_{p',q'}}^{\bar r}\Bigg\},\notag
\end{eqnarray*}
\textcolor{black}{assuming w.l.o.g. that since $\bar r>r' $, and if $r=\infty $, i.e. $r'<\check r' $ one can take $\bar r>\check r'>r' $ (recall that if $r<+\infty$, $\check r'=r'$)}. 
Hence, for $T$  small enough and using again Lemma \ref{lem_unifesti_gencase_2} \textcolor{black}{(equation \eqref{LEMME_2_WITHOUT_SING_2})} we obtain:
\begin{eqnarray*}
 &&\int_t^T |\brho^{\varepsilon_k}_{t,\mu}(s,\cdot) -\brho^{\varepsilon_j}_{t,\mu}(s,\cdot)|_{B^{-\beta+\vartheta\Gamma}_{p',q'}}^{\bar r}ds \\
 &\le& C\bigg\{|b^{\varepsilon_k}|_{L^r(B^{\beta}_{p,q})}^{\bar r}\sup_{v\in (t,T]} |(\brho^{\varepsilon_k}_{t,\mu}-\brho^{\varepsilon_j}_{t,\mu})|_{{\color{black}\Lw^{r'}\big((t,v],B_{p',q'}^{-\beta+\vartheta\Gamma}\big)}}^{\bar r} + (T-t)^{\textcolor{black}{\frac{ \check \delta \bar r}{\check r'}}}|(b^{\varepsilon_k}-b^{\varepsilon_j})|_{L^{\textcolor{black}{\check r}}(B^{\beta}_{p,q})}^{\bar r}  \bigg\}\\
&&\quad \times \textcolor{black}{(T-t)^{\theta}}(\textcolor{black}{(T-t)^{\delta\frac{\bar r}{r'}}}|b^{\varepsilon_k}|_{L^r(B^{\beta}_{p,q})}^{\bar r}+1)\\
&\le & C(T-t)^{\theta}|(b^{\varepsilon_k}-b^{\varepsilon_j})|_{L^{\textcolor{black}{\check r}}(B^{\textcolor{black}{\beta-\vartheta\Gamma}}_{p,q})}^{\bar r} ,
\end{eqnarray*}
using \eqref{CAUCHY_BOUND_LP_B_W_SUP} for the last inequality. Proposition \ref{PROP_APPROX} then gives the stated convergence.
\end{proof}

\begin{lemme}\label{lem_ex_duha_gencase}
Let $(t,\mu)$ in $[0,T] \times \mathcal P(\R^d)$ and let $(\varepsilon_k)_{k \ge 1}$ be a decreasing sequence \textcolor{black}{going to 0} and $\brho_{t,\mu}$ in \textcolor{black}{$L^{\bar r}(B_{p',q'}^{-\beta+\vartheta \Gamma})$\textcolor{black}{,} with $\bar r$ as in \textcolor{black}{\eqref{cond_time_gencase}}, $\vartheta\in [0,1) $} and \textcolor{black}{s.t. for all $s\in [t,T], \brho_{t,\mu}(s,\cdot)\in {\mathcal P}(\R^d) $}, be the limit point of $(\brho_{t,\mu}^{\varepsilon_k})_{k\ge 1}$ \textcolor{black}{with $\brho_{t,\mu}^{\varepsilon_k} $ solving \eqref{main_MOLL}}. Then $\brho_{t,\mu}$ satisfies the non-linear Fokker-Planck equation \eqref{NL_PDE_FK} (in a distributional sense) and admits the Duhamel type representation \eqref{main_DUHAMEL}.
\end{lemme}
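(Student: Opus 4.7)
The plan is to pass to the limit along the sequence $\varepsilon_k\downarrow 0$ in both the mollified Duhamel representation \eqref{main_MOLL} and the mollified distributional formulation \eqref{PDE_EPS_VAR}, exploiting the strong convergences from Lemma \ref{FIRST_STAB}:
\begin{equation*}
|\brho^{\varepsilon_k}_{t,\mu}-\brho_{t,\mu}|_{L^{\bar r}(B^{-\beta+\vartheta\Gamma}_{p',q'})} + \sup_{s\in(t,T]}|(\brho^{\varepsilon_k}_{t,\mu}-\brho_{t,\mu})(s,\cdot)|_{L^1} \underset{k\to\infty}{\longrightarrow} 0,
\end{equation*}
together with the approximation property $|b^{\varepsilon_k}-b|_{L^{\check r}(B^{\tilde\beta}_{p,q})}\to 0$ for any $\tilde\beta<\beta$ and the uniform bound on $|b^{\varepsilon_k}|_{L^r(B^\beta_{p,q})}$ (Proposition \ref{PROP_APPROX}), as well as the uniform a priori estimates on $\brho^{\varepsilon_k}_{t,\mu}$ from Lemma \ref{lem_unifesti_gencase_2}. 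The linear terms (the contribution $p^\alpha_{s-t}\star\mu$ in \eqref{main_MOLL}, the initial value $-\int\varphi(t,y)\mu(dy)$ in \eqref{PDE_EPS_VAR}, and the pairing $\int\int \brho_{t,\mu}^{\varepsilon_k}(-\partial_s+(L^\alpha)^*)\varphi$) pass to the limit without effort, so the whole analysis reduces to handling the nonlinear drift convolution.

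The core step is to establish, for a.e. $s\in(t,T]$ and in the Besov norm $B^{-\beta+\vartheta\Gamma}_{p',q'}$, the convergence
\begin{equation*}
\int_t^s\!\bigl[\{\mathcal B^{\varepsilon_k}_{\brho^{\varepsilon_k}_{t,\mu}}\brho^{\varepsilon_k}_{t,\mu}\}(v,\cdot)\star\nabla p^{\alpha}_{s-v}\bigr](\cdot)\,dv \,\longrightarrow\, \int_t^s\!\bigl[\{\mathcal B_{\brho_{t,\mu}}\brho_{t,\mu}\}(v,\cdot)\star\nabla p^{\alpha}_{s-v}\bigr](\cdot)\,dv,
\end{equation*}
together with the analogous convergence paired against $\nabla\varphi$ needed for \eqref{PDE_EPS_VAR}. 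The integrand would be decomposed exactly as in the proof of Lemma \ref{FIRST_STAB} (cf. \eqref{DEF_DELTA_RHO_RHO_KP}):
\begin{equation*}
\mathcal B^{\varepsilon_k}_{\brho^{\varepsilon_k}_{t,\mu}}\brho^{\varepsilon_k}_{t,\mu} - \mathcal B_{\brho_{t,\mu}}\brho_{t,\mu} = \mathcal B^{\varepsilon_k}_{\brho^{\varepsilon_k}_{t,\mu}}(\brho^{\varepsilon_k}_{t,\mu}-\brho_{t,\mu}) + (\mathcal B^{\varepsilon_k}_{\brho^{\varepsilon_k}_{t,\mu}}-\mathcal B^{\varepsilon_k}_{\brho_{t,\mu}})\brho_{t,\mu} + (\mathcal B^{\varepsilon_k}_{\brho_{t,\mu}}-\mathcal B_{\brho_{t,\mu}})\brho_{t,\mu},
\end{equation*}
and each piece treated by Lemma \ref{LEMME_FGH} with $h=\nabla p^{\alpha}_{s-v}$ and the same parameter choices as in \eqref{FOR_FURTHER_REF_2}: $\ell_f=p$, $\ell_{g_1}=p'$, $\ell_{g_2}=1$, $\ell_h=p'$. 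The first two pieces vanish thanks to the strong convergences recalled above combined with the uniform Besov bounds on $b^{\varepsilon_k}$ and $\brho^{\varepsilon_k}_{t,\mu}$ (the dequadrification works because the $L^1$-component $g_2$ factors out as a probability mass uniformly in $k$). The third piece is controlled by $|b^{\varepsilon_k}-b|_{L^{\check r}(B^{\beta-\vartheta\Gamma}_{p,q})}\to 0$, using the uniform $\Lw^{\check r'}$-control of $\brho_{t,\mu}$ inherited by lower semicontinuity from Lemma \ref{lem_unifesti_gencase_2}.

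Once this convergence is in hand, passing to the limit pointwise in $s$ (along a subsequence realizing the a.e. convergence in $B^{-\beta+\vartheta\Gamma}_{p',q'}$) in \eqref{main_MOLL} yields the Duhamel representation \eqref{main_DUHAMEL}. For the distributional formulation \eqref{NL_PDE_FK}, one passes to the limit term by term in \eqref{PDE_EPS_VAR}, the nonlinear contribution being handled by testing against $\nabla\varphi\in C_0^\infty$ via duality \eqref{EQ_DUALITY} and invoking the above decomposition integrated against $\nabla\varphi$. The main obstacle is precisely the convergence of the nonlinear product $\mathcal B_\rho\,\rho$: the $L^1$--slot ($g_2$) in Lemma \ref{LEMME_FGH} is indispensable and is the mechanism by which the apparent quadratic-in-$\rho$ estimate is tamed; the small regularity margin $\vartheta\Gamma>0$ is what gives the Besov embedding $B^{-\beta+\vartheta\Gamma}_{p',q'}\hookrightarrow B^{-\beta}_{p',q'}$ needed to close the bounds uniformly in $k$, matching the quantities controlled by Lemma \ref{lem_unifesti_gencase_2} on one side and the Besov regularity of the interaction kernel on the other.
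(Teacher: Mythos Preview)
Your proposal is correct and, for the distributional formulation \eqref{NL_PDE_FK}, matches the paper's argument: both pass to the limit in \eqref{PDE_EPS_VAR} after telescoping $\mathcal B^{\varepsilon_k}_{\brho^{\varepsilon_k}_{t,\mu}}\brho^{\varepsilon_k}_{t,\mu}-\mathcal B_{\brho_{t,\mu}}\brho_{t,\mu}$ into three pieces (the paper orders the telescoping slightly differently and uses the duality bound \eqref{EQ_DUALITY} on $|\mathcal B_\rho|_{L^\infty}$ directly rather than invoking Lemma \ref{LEMME_FGH}, but the content is the same).

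For the Duhamel representation, the two routes genuinely differ. You pass to the limit directly in the mollified Duhamel formula \eqref{main_MOLL}, controlling the nonlinear convolution in $B^{-\beta+\vartheta\Gamma}_{p',q'}$ via Lemma \ref{LEMME_FGH}; this amounts to rerunning the estimates of Lemma \ref{FIRST_STAB} with $\brho_{t,\mu}$ in place of $\brho^{\varepsilon_j}_{t,\mu}$ (and does require, as you note, that $\brho_{t,\mu}$ inherits the weighted bounds of Lemma \ref{lem_unifesti_gencase_2}, which follows from the strong convergence and Fatou). The paper instead first upgrades the distributional formulation to one holding at each fixed time $s$ (equation \eqref{PDE_VAR_BIS_S}), using the extra $L^\infty(L^1)$ convergence for the new boundary term, and then \emph{derives} Duhamel from this by plugging in the test function $\phi(v,x)=p^\alpha_{s-v}\star f(x)$ and extending to bounded measurable $f$ by density, exactly as in the proof of Lemma \ref{LEM_DUHAMEL_MOLL}. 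The paper's route is more economical---it recycles the distributional limit rather than repeating the Besov convolution analysis---while yours is self-contained and makes the role of Lemma \ref{LEMME_FGH} more transparent throughout.
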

\begin{proof} We assume here w.l.o.g. and for simplicity that $r<+\infty$. The case $r=+\infty$ could be handled 
{\color{black}reproducing the arguments below combined \textcolor{black}{with the corresponding ones} in the proof of the previous lemma.}
Write from \eqref{PDE_EPS_VAR}:
\begin{eqnarray*}
\label{PDE_EPS_VAR_FOR_LIM}
0=-\int\varphi(t,\textcolor{black}{y})\mu(\textcolor{black}{dy}) &\textcolor{black}{-}& \int_t^T ds \int_{\R^d} dy\,  ( \mathcal B_{\brho_{t,\mu}}(s,y) \brho_{t,\mu}(s,y))\cdot \textcolor{black}{\nabla \varphi(s,y)}\notag\\
&+&\int_t^T ds \int_{\R^d}  dy\, \brho_{t,\mu}(s,y) (-\partial_s+\textcolor{black}{L^\alpha})\varphi(s,y)+\Delta_{\brho_{t,\mu},\brho^{\varepsilon_k}_{t,\mu}}(\varphi),
\end{eqnarray*}
where:
\begin{eqnarray*}
&&\Delta_{\brho_{t,\mu},\brho^{\varepsilon_k}_{t,\mu}}(\varphi)\notag\\
&:=&\int_t^T ds \int_{\R^d}  dy\, (\brho^{\varepsilon_k}_{t,\mu}(s,y)-\brho_{t,\mu}(s,y)) (-\partial_s+\textcolor{black}{L^\alpha})\varphi(s,y) \label{DELTA_K_DISTRIB}\\
&&\textcolor{black}{-}\int_t^T ds \int_{\R^d} dy \, ( \mathcal B_{\brho^{\varepsilon_k}_{t,\mu}}^{\varepsilon_k}(s,y) \brho^{\varepsilon_k}_{t,\mu}(s,y)-\mathcal B_{\brho_{t,\mu}}(s,y) \brho_{t,\mu}(s,y))\cdot \textcolor{black}{\nabla \varphi(s,y)} =:\Delta_{\brho_{t,\mu},\brho^{\varepsilon_k}_{t,\mu}}^1(\varphi)+\Delta_{\brho_{t,\mu},\brho^{\varepsilon_k}_{t,\mu}}^2(\varphi).\notag
\end{eqnarray*}
Since $\varphi \in \textcolor{black}{C_0^\infty}\big((-T,T)\times \R^d\big) $ it is clear that $(\partial_s-\textcolor{black}{L^\alpha})\varphi\in L^{ r}(B_{p,q}^{\beta}) $ so that we readily get from \eqref{STRONG_CONV} that $|\Delta_{\brho_{t,\mu},\brho^{\varepsilon_k}_{t,\mu}}^1(\varphi)|\underset{k}{\longrightarrow} 0$.

For the second term $\Delta_{\brho_{t,\mu},\brho^{\varepsilon_k}_{t,\mu}}^2(\varphi)$, write:
\begin{eqnarray*}
|\Delta_{\brho_{t,\mu},\brho^{\varepsilon_k}_{t,\mu}}^2(\varphi)| &\le& \int_t^T ds \int_{\R^d} dy | ( \mathcal B_{\brho^{\varepsilon_k}_{t,\mu}}^{\varepsilon_k}(s,y) \brho^{\varepsilon_k}_{t,\mu}(s,y)-\mathcal B_{\brho_{t,\mu}}(s,y) \brho_{t,\mu}(s,y))| |\nabla \varphi(s,\cdot)|_{L^\infty} \\
&\le& |\nabla \varphi|_{L^\infty(L^\infty)} \int_t^T ds\Bigg\{ \int_{\R^d} dy |\mathcal B_{\brho^{\varepsilon_k}_{t,\mu}}^{\varepsilon_k}(s,\cdot)- \mathcal B_{\brho^{\varepsilon_k}_{t,\mu}}(s,\cdot)|_{L^\infty} \brho^{\varepsilon_k}_{t,\mu}(s,y)\\
&&+\int_{\R^d} dy |\mathcal B_{\brho^{\varepsilon_k}_{t,\mu}}(s,\cdot)- \mathcal B_{{\brho}_{t,\mu}}(s,\cdot)|_{L^\infty} \brho^{\varepsilon_k}_{t,\mu}(s,y)\\
&&+\int_{\R^d} dy | \mathcal B_{{\brho}_{t,\mu}}(s,\cdot)|_{L^\infty} |\brho^{\varepsilon_k}_{t,\mu}(s,y)-\brho_{t,\mu}(s,y)|\Bigg\}.
  \end{eqnarray*}
Recalling that $\brho^{\varepsilon_k}_{t,\mu}(s,\cdot) $ is a probability density and using as well the  duality control  \eqref{EQ_DUALITY} between Besov spaces and the H\"older inequality in time, we derive that
  for any $\textcolor{black}{\vartheta\in (0,1) }$:
\begin{eqnarray*}
| \Delta_{\brho_{t,\mu},\brho^{\varepsilon_k}_{t,\mu}}^{2}(\varphi)| &\le& {\color{black}C}\Big\{ |b-b^{\varepsilon_{\color{black}k}}|_{L^r(B^{\textcolor{black}{\beta-\vartheta\Gamma}}_{p,q})}  |\brho_{t,\mu}^{\varepsilon_k}|_{L^{r'}(B^{\textcolor{black}{-\beta+\vartheta\Gamma}}_{p',q'})}  +  |b|_{L^r(B^{\beta}_{p,q})}  |\brho_{t,\mu}^{\varepsilon_k}-\brho_{t,\mu}|_{L^{r'}(B^{-\beta}_{p',q'})}\label{DELTA_K_DISTRIB_2}\\
&& \quad  + |b|_{L^r(B^{\beta}_{p,q})}  |\brho_{t,\mu}|_{L^{r'}(B^{-\beta}_{p',q'})}  |\brho_{t,\mu}^{\varepsilon_k}-\brho_{t,\mu}|_{L^{\infty}(L^1)} \Big\} |\nabla \varphi|_{L^{\infty}(L^\infty)}.\notag
\end{eqnarray*}
Using Lemma \ref{FIRST_STAB} and Proposition \ref{PROP_APPROX} we conclude  that $|\Delta_{\brho_{t,\mu},\brho^{\varepsilon_k}_{t,\mu}}^2(\varphi)|\underset{k}{\longrightarrow} 0$ so that $\brho_{t,\mu}$ satisfies \eqref{NL_PDE_FK} in a distributional sense.\\

To establish the Duhamel representation \eqref{main_DUHAMEL} observe first that for any $s\in (t,T] $, $\varphi\in \textcolor{black}{C_0^\infty}\big((-T,T)\times \R^d\big) $,
\begin{align}
\label{PDE_EPS_VAR_BIS_S}
\int_{\R^d} dy \brho^{\varepsilon_k}_{t,\mu}(s,y)\varphi(s,y)
-\int\varphi(t,\textcolor{black}{y})\mu(\textcolor{black}{dy})
 {\color{black}-\int_t^s dv \int_{\R^d} dy\, \{\mathcal B_{\brho^{\varepsilon_k}_{t,\mu}}^{\varepsilon_k}(v,y) \brho^{\varepsilon_k}_{t,\mu}(v,y)\}\cdot \ \nabla\varphi(v,y)}
\notag\\
+\int_t^s dv \int_{\R^d}  dy \brho^{\varepsilon_k}_{t,\mu}(v,y) (\textcolor{black}{-\partial_s+L^\alpha})\varphi(v,y)=0.
\end{align}
Introducing
\begin{eqnarray*}
\Delta_{\brho_{t,\mu},\brho^{\varepsilon_k}_{t,\mu}}^s(\varphi)&:=&\int_{\R^d} dy (\brho^{\varepsilon_k}_{t,\mu}-\brho_{t,\mu})(s,y)\varphi(s,y)
\notag\\
&&{\color{black}-\int_t^s dv \int_{\R^d} dy\, \{ \mathcal B_{\brho^{\varepsilon_k}_{t,\mu}}^{\varepsilon_k}(v,y) \brho^{\varepsilon_k}_{t,\mu}(v,y)-\mathcal B_{\brho_{t,\mu}}(v,y) \brho_{t,\mu}(v,y)\}\cdot \nabla\varphi(v,y)}\notag\\
&&+\int_t^s dv \int_{\R^d}  dy (\brho^{\varepsilon_k}_{t,\mu}(v,y)-\brho_{t,\mu}(v,y)) (-\partial_s+\textcolor{black}{L^\alpha})\varphi(v,y),
\end{eqnarray*}
it can hence be deduced from the previous arguments that, along a suitable subsequence $(\varepsilon_{k_m})_{m\ge 1} $ (to handle the first additional term through the converse of the Lebesgue theorem from the strong convergence \eqref{STRONG_CONV}), that $\Delta_{\brho_{t,\mu},\brho^{\varepsilon_{k_m}}_{t,\mu}}^s(\varphi) \underset{m}{\longrightarrow} 0 $ \textcolor{black}{for almost all $s\in [t,T] $}. Hence, from \eqref{PDE_EPS_VAR_BIS_S}, \textcolor{black}{for almost all $s\in [t,T] $,}
\begin{eqnarray}
\label{PDE_VAR_BIS_S}
\int_{\R^d} dy \brho_{t,\mu}(s,\textcolor{black}{y})\varphi(s,\textcolor{black}{y})
-\int\varphi(t,\textcolor{black}{y})\mu(\textcolor{black}{dy})
-\int_t^s dv \int_{\R^d} d\textcolor{black}{y} \  \mathcal B_{\brho_{t,\mu}}(v,\textcolor{black}{y}) \brho_{t,\mu}(v,\textcolor{black}{y})){\color{black}\ \cdot}\ \nabla \varphi(v,\textcolor{black}{y})\notag\\
+\int_t^s dv \int_{\R^d}  d\textcolor{black}{y} \ \brho_{t,\mu}(v,\textcolor{black}{y}) (-\partial_s+\textcolor{black}{L^\alpha})\varphi(v,\textcolor{black}{y})=0.
\end{eqnarray}
\textcolor{black}{To establish \eqref{main_DUHAMEL} we can start from the Duhamel representation which holds for the density $\brho^{\textcolor{black}{\varepsilon}}_{t,\mu}$ associated with the mollified coefficients (see the proof of Lemma \ref{LEM_DUHAMEL_MOLL} in Appendix \ref{PROOF_LEM_DUHAMEL_MOLL} below). Namely, the above equation \eqref{main_MOLL} which we now recall for clarity:
\begin{align*}
\brho^{\textcolor{black}{\varepsilon}}_{t,\mu}(s,x)= p^\alpha_{s-t}\star \,\mu(x)-\int_t^s dv\, \Big(\nabla p^\alpha_{s-v}\star \{ \mathcal B_{\brho^{\varepsilon}_{t,\mu}}^{\varepsilon}(v,x) \brho^{\varepsilon}_{t,\mu}(v,x)\}\Big).
\end{align*}
The arguments used in Lemma \ref{FIRST_STAB} to establish the $L^1$ convergence then give \eqref{main_DUHAMEL} for the limit.}

\end{proof}

We complete this section with a uniqueness result for \eqref{main_DUHAMEL} in \textcolor{black}{$L^{\overline{r}}(B^{-\beta+\vartheta\Gamma}_{p',q'})$, with $\bar r $ as in \eqref{cond_time_gencase}, $\vartheta\in [0,1) $}, which, with the preceding lemma, yields to the well-posedness result stated in Proposition \ref{WP_FK}.

\begin{lemme}\label{lem_unique_FK_gencase}
For any $(t,\mu)$ in $[0,T] \times \mathcal P(\R^d)$, the non-linear Fokker-Planck equation \eqref{NL_PDE_FK} admits a unique solution $(\brho_{t,\mu}(s,\cdot))_{s\in [t,T]} $ in $\textcolor{black}{L^{\overline{r}}(B^{-\beta+\vartheta \Gamma}_{p',q'})}$  with $\bar r $ as in \textcolor{black}{\eqref{cond_time_gencase}}, \textcolor{black}{$\vartheta\in [0,1) $} and $\Gamma$ given by \eqref{GAP}, s.t. \textcolor{black}{for all $ s\in [t,T], \brho_{t,\mu}(s,\cdot)\in \mathcal P(\R^d)$}.
\end{lemme}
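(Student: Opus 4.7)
The plan is to show that any two candidate solutions $\brho^1_{t,\mu}$ and $\brho^2_{t,\mu}$ in the prescribed class must coincide on a short time interval $[t,S]$ (with $S-t$ depending only on the \emph{parameters} and on $|b|_{L^r(B^\beta_{p,q})}$), and then to iterate this small-time uniqueness result along the full interval $[t,T]$, using that probability conservation keeps $\brho^i(s,\cdot)\in \mathcal P(\R^d)$ at each restart.

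\textbf{Step 1 (from distributional to mild).} First, I would show that any solution $\brho^i$ of the distributional non-linear Fokker-Planck equation \eqref{NL_PDE_FK} lying in $L^{\bar r}(B^{-\beta+\vartheta\Gamma}_{p',q'})$ with values in $\mathcal P(\R^d)$ admits the Duhamel representation \eqref{main_DUHAMEL}. This is obtained by testing \eqref{NL_PDE_FK} against $\phi(v,x)=p^\alpha_{s-v}\star f(x)$ for $f\in C_0^\infty(\R^d,\R)$, exactly as at the end of the proof of Lemma \ref{lem_ex_duha_gencase}. The required integrability of the bilinear source term against the stable heat kernel follows from $|\brho^i(v,\cdot)|_{L^1}=1$, the Young inequality \eqref{YOUNG}, and the heat-kernel estimate \eqref{SING_STABLE_HK}, all being compatible under \eqref{cond_gencase}.

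\textbf{Step 2 (linearization of the difference).} Set $\delta\brho:=\brho^1_{t,\mu}-\brho^2_{t,\mu}$. The bilinear identity
\[
\mathcal B_{\brho^1_{t,\mu}}\brho^1_{t,\mu}-\mathcal B_{\brho^2_{t,\mu}}\brho^2_{t,\mu}=\mathcal B_{\brho^1_{t,\mu}}\,\delta\brho+\mathcal B_{\delta\brho}\,\brho^2_{t,\mu},
\]
subtracted from the two Duhamel representations, yields
\[
\delta\brho(s,\cdot)=-\int_t^s dv\,\Big[\{\mathcal B_{\brho^1_{t,\mu}}(v,\cdot)\delta\brho(v,\cdot)+\mathcal B_{\delta\brho}(v,\cdot)\brho^2_{t,\mu}(v,\cdot)\}\star\nabla p_{s-v}^\alpha\Big].
\]
Applying Lemma \ref{LEMME_FGH} with the very same choice of parameters as in \eqref{FOR_FURTHER_REF_2} --- and, in each of the two summands, exploiting the \emph{dequadrification} $\ell_{g_2}=1$ with $|\brho^i(v,\cdot)|_{L^1}=1$ --- one obtains, in both the Besov norm $B^{-\beta+\vartheta\Gamma}_{p',q'}$ and, using the embedding \eqref{EMBEDDING}, the norm $L^1$ via $B^0_{1,1}\hookrightarrow L^1$,
\begin{align*}
|\delta\brho(s,\cdot)|_{B^{-\beta+\vartheta\Gamma}_{p',q'}}&\le C\!\!\int_t^s\!\!\frac{dv}{(s-v)^{\frac{-\beta+\vartheta\Gamma+1}{\alpha}+\frac{d}{\alpha p}}}|b(v,\cdot)|_{B^\beta_{p,q}}\Bigl\{|\brho^1_{t,\mu}(v,\cdot)|_{B^{-\beta+\vartheta\Gamma}_{p',q'}}|\delta\brho(v,\cdot)|_{L^1}\\
&\qquad+|\delta\brho(v,\cdot)|_{B^{-\beta+\vartheta\Gamma}_{p',q'}}\Bigr\},
\end{align*}
together with the analogous inequality for $|\delta\brho(s,\cdot)|_{L^1}$ in which the power of $(s-v)$ is simply $\tfrac{1}{r'}{\cdot}\alpha^{-1}$ (after a Hölder in time).

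\textbf{Step 3 (contraction on small time).} I would then mirror the strategy of the proof of Lemma \ref{FIRST_STAB} following \eqref{PRIMER_FOR_INT_STAB}: multiply by the weight $(S-s)^{-1/\alpha}$, raise to the power $r'$, invoke convexity and Fubini, and use the uniform bound \eqref{LEMME_2_WITHOUT_SING_2} from Lemma \ref{lem_unifesti_gencase_2} applied to $\brho^1_{t,\mu}$. This yields, for $S-t$ sufficiently small depending only on the \emph{parameters} and on $|b|_{L^r(B^\beta_{p,q})}$,
\[
\sup_{v\in(t,S]}|\delta\brho|_{\Lw^{r'}((t,v],B^{-\beta+\vartheta\Gamma}_{p',q'})}^{r'}+\sup_{s\in(t,S]}|\delta\brho(s,\cdot)|_{L^1}^{r'}\le \tfrac12\Bigl(\sup_{v\in(t,S]}|\delta\brho|_{\Lw^{r'}((t,v],B^{-\beta+\vartheta\Gamma}_{p',q'})}^{r'}+\sup_{s\in(t,S]}|\delta\brho(s,\cdot)|_{L^1}^{r'}\Bigr),
\]
forcing $\delta\brho\equiv 0$ on $[t,S]$; since $\brho^1_{t,\mu}(S,\cdot)=\brho^2_{t,\mu}(S,\cdot)\in\mathcal P(\R^d)$ the same argument can be restarted, and iterated finitely many times, to cover $[t,T]$. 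Note that the case $\bar r>r'$ is absorbed since, on a compact interval, $L^{\bar r}\hookrightarrow L^{r'}$.

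\textbf{Main obstacle.} The essential difficulty is the same one faced in Lemma \ref{FIRST_STAB}: because the dequadrification choice $\ell_{g_2}=1$ is crucial to obtain an integrable time singularity and linear (rather than quadratic) dependence on $\delta\brho$, the $L^1$ norm of the difference is forced into the analysis. Uniqueness in $L^{\bar r}(B^{-\beta+\vartheta\Gamma}_{p',q'})$ can therefore only be closed by simultaneously propagating control of the weighted Bochner-Besov norm \emph{and} of the sup-in-time $L^1$ norm of $\delta\brho$; neither alone is self-contained in the estimates produced by Lemma \ref{LEMME_FGH}.
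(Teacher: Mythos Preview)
Your proposal is correct and follows essentially the same route as the paper: derive the Duhamel representation for any solution in the class, linearize the bilinear term, apply Lemma \ref{LEMME_FGH} with the same parameter choices as in Lemma \ref{FIRST_STAB} to control both the $B^{-\beta+\vartheta\Gamma}_{p',q'}$ and $L^1$ norms of the difference, and close via the weighted Bochner norm using that the a priori estimates of Lemma \ref{lem_unifesti_gencase_2} can be reproduced for $\brho^1_{t,\mu}$ from its Duhamel representation. The paper organizes the endgame slightly differently (first concluding that the weighted Besov norm of the difference vanishes as in \eqref{CAUCHY_BOUND_LP_B_W_SUP}, then deducing the $L^1$ control, then feeding back), whereas you package both norms into a single contraction inequality, but this is only a cosmetic reordering of the same argument.
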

\begin{proof}
Assume that $\brho^{(1)}_{t,\mu}$ and $\brho^{(2)}_{t,\mu}$ are two possible  solutions of \eqref{NL_PDE_FK}.  
{\color{black}From the last part of the proof of Lemma \ref{lem_ex_duha_gencase}, one can easily check that \textcolor{black}{density functions  solving \eqref{NL_PDE_FK}}, and which lie in $L^{r'}(B^{-\beta}_{p',q'})$, also satisfy the Duhamel formulation \eqref{main_DUHAMEL}}. 
Then, for a.e. $t\le s\le T$, $y$ in $\R^d$,
\begin{eqnarray*}
 \brho^{(1)}_{t,\mu}(s,y) -\brho^{(2)}_{t,\mu}(s,y)
 = -\int_{t}^s dv \Big[\{{\mathcal B}_{\brho^{(1)}_{t,\mu}}(v,\cdot) \brho^{(1)}_{t,\mu}(v,\cdot)-
 {\mathcal B}_{\brho^{(2)}_{t,\mu}}(v,\cdot) \brho^{(2)}_{t,\mu}(v,\cdot)\} \star \nabla p_{s-v}^{\alpha}\Big] (y).
\end{eqnarray*}

Reproducing the stability arguments of the proof of Lemma \ref{FIRST_STAB} (see in particular those leading to \eqref{eq:inter_stabi11}), one can check that
\begin{align*}
| \brho^{(1)}_{t,\mu}(s,\cdot) -\brho^{(2)}_{t,\mu}(s,\cdot) |_{B^{-\beta}_{p',q'}}
&\le C\int_{t}^{s}\frac {dv}{(s-v)^{\frac {-\beta+1}{\alpha}+\frac d{\alpha p}}}|b(v,\cdot)|_{B^\beta_{p,q}}\Big\{|\brho^{(1)}_{t,\mu}(v,\cdot)|_{B^{-\beta}_{p',q'}}|\brho^{(1)}_{t,\mu}(v,\cdot) -\brho^{(2)}_{t,\mu}(v,\cdot)|_{L^1}\\
&+|\brho^{(1)}_{t,\mu}(v,\cdot) -\brho^{(2)}_{t,\mu}(v,\cdot)|_{B^{-\beta}_{p',q'}}\Big\}.
\end{align*}
On the one hand, since $\brho_{t,\mu}^{(1)} $ satisfies \eqref{main_DUHAMEL}, the analysis of Lemma \ref{lem_unifesti_gencase_2} can be reproduced and the controls stated therein remain valid with $\brho_{t,\mu}^\varepsilon $ replaced by $\brho_{t,\mu}^{(1)} $. In particular:
$$ \Bigg( \int_{t}^s\frac{dv}{(s-v)^{\frac{r'}{\alpha}}} |\brho^{(1)}_{t,\mu}(v,\cdot)|_{B^{-\beta}_{p',q'}}^{r'} \Bigg)\le C(s-t)^\theta.$$
On the other hand, the analysis of Lemma \ref{FIRST_STAB} can be reproduced to control the term $|\brho^{(1)}_{t,\mu}(v,\cdot) -\brho^{(2)}_{t,\mu}(v,\cdot)|_{L^1} $. In particular, similarly to \eqref{esti_inter_imp_conv}, one gets,

\begin{align*}
\sup_{v\in (t,s]}\big\{|\brho^{(1)}_{t,\mu}(v,\cdot) -\brho^{(2)}_{t,\mu}(v,\cdot)|_{L^1}\big\} \le C  |b|_{L^r(B^{\beta}_{p,q})} \sup_{v\in (t,s ]}|(\brho^{(1)}_{t,\mu}-\brho^{(2)}_{t,\mu})|_{\textcolor{black}{\Lw^{r'}}\big((t,v],B_{p',q'}^{-\beta}\big)},
\end{align*}
and similarly to \eqref{CAUCHY_BOUND_LP_B_W_SUP} one can show that it holds:
\begin{eqnarray*} 
\label{BD_DIFF_LR4_POIDS}
\sup_{v\in (t,T]}|(\brho^{(2)}_{t,\mu}-\brho^{(1)}_{t,\mu})|_{\textcolor{black}{\Lw^{r'}}\big((t,v],B_{p',q'}^{-\beta}\big)}^{r'} =0.
\end{eqnarray*}
Hence, $\sup_{v\in (t,s]}\big\{|\brho^{(1)}_{t,\mu}(v,\cdot) -\brho^{(2)}_{t,\mu}(v,\cdot)|_{L^1}\big\}=0$ and 
\begin{align*}
| \brho^{(1)}_{t,\mu}(s,\cdot) -\brho^{(2)}_{t,\mu}(s,\cdot) |_{B^{-\beta}_{p',q'}}
&\le C\int_{t}^{s}\frac {dv}{(s-v)^{\frac {-\beta+1}{\alpha}+\frac d{\alpha p}}}|b(v,\cdot)|_{B^\beta_{p,q}}\Big\{|\brho^{(1)}_{t,\mu}(v,\cdot) -\brho^{(2)}_{t,\mu}(v,\cdot)|_{B^{-\beta}_{p',q'}}\Big\}\\
&\le C|b|_{L^r(B_{p,q}^\beta)}\bigg(\int_{t}^s\frac {dv}{(s-v)^{(\frac {-\beta+1}{\alpha}+\frac {d}{\alpha p})r'}}
|\brho^{(1)}_{t,\mu}(v,\cdot) -\brho^{(2)}_{t,\mu}(v,\cdot)|^{r'}_{B^{-\beta}_{p',q'}}\bigg)^{\frac 1{r'}}.
\end{align*}
Taking the $\bar r $ exponent, integrating in time and using the H\"older inequality and the Fubini theorem yield the statement. These arguments are similar to those at the end of the proof of Lemma \ref{FIRST_STAB}.

%
%
\end{proof}

\subsection{From the Fokker-Planck equation to the martingale problem}
\label{sec_WP_SDE}

To prove the  \textcolor{black}{weak well-posedness part of} Theorem \ref{THM_GEN}, it remains to relate the non-linear Fokker-Planck equation to the non-linear martingale problem.

We first establish the existence of a weak solution to \eqref{main} from the tightness \textcolor{black}{of $(\mathbf P^{\varepsilon_k})_{k\ge 1}$, where $\mathbf P^{\varepsilon_k} $ stands for the solution to the non-linear martingale problem related to \textcolor{black}{the mollified} \textcolor{black}{McKean-Vlasov SDE} \eqref{main_smoothed} with mollification parameter $\varepsilon_k \underset{k}{\rightarrow} 0$}. \textcolor{black}{We then prove that any related limit point actually solves the martingale problem associated with \eqref{main}}. Those arguments are detailed in Proposition \ref{prop_ExistenceMain} below. 
The uniqueness of the solution will then be derived through an enhanced (to the stable pure jump case) Krylov-R\"ockner like criterion, \textcolor{black}{see Proposition \ref{prop_UniquenessMain_gencas}}.
Both \textcolor{black}{the existence and uniqueness} results rely on a uniform estimate for the mollified non-linear drift $\mathcal B^\varepsilon_{\brho_{t,\mu}^\varepsilon}$  involved in \eqref{main_smoothed} which, collecting the estimates established in Section \ref{sec_nl_FK} reads as follows:

\begin{lemme}\label{LEMME_FOR_TIGHTNESS_AND_IDENTIFICATION}
Assume that \eqref{cond_gencase} holds. For any $(t,\mu)$ in $[0,T]\times \mathcal P(\R^d)$, any $\varepsilon >0$, the mollified non-linear drift $\mathcal B^\varepsilon_{\brho_{t,\mu}^\varepsilon}$ in \eqref{main_smoothed} is in $L^{r_0}(B^0_{\infty,1})$ for all $r_0$ in $\Big(0,\big(\frac 1{r}+\frac 1{\overline{r}}\big)^{-1}\Big]$, for $\overline{r}$ satisfying \eqref{cond_time_gencase} and the following estimate holds:
\begin{eqnarray}\label{unifestim_drift_gencase}
\forall t\le S\le T,\,|\mathcal B^\varepsilon_{\brho_{t,\mu}^\varepsilon}|_{L^{r_0}((t,S],B^0_{\infty,1})}\le C(S-t)^{\frac 1{r_0}-\Big(\frac 1{r}+\frac 1{\overline{r}}\Big)}|b|_{L^r(B^{\beta}_{p,q})}|\brho_{t,\mu}^\varepsilon|_{L^{\overline{r}}((t,S],B^{-\beta}_{p',q'})}.
\end{eqnarray}
\end{lemme}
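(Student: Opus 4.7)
The plan is to obtain a pointwise-in-time bound on $|\mathcal B^\varepsilon_{\brho_{t,\mu}^\varepsilon}(s,\cdot)|_{B^0_{\infty,1}}$ from the Young convolution inequality \eqref{YOUNG} in Besov spaces, and then combine it with a (generalized) Hölder inequality in time. Recall that by definition $\mathcal B^\varepsilon_{\brho_{t,\mu}^\varepsilon}(s,\cdot) = b^\varepsilon(s,\cdot)\star \brho_{t,\mu}^\varepsilon(s,\cdot)$, so the job is purely to estimate the $B^0_{\infty,1}$-norm of a convolution product between the distributional kernel (in $B^\beta_{p,q}$) and the density (which belongs to $B^{-\beta}_{p',q'}$ according to Lemma \ref{lem_unifesti_gencase_2}).

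First I would apply \eqref{YOUNG} with the target parameters $\gamma=0$, $\ell=\infty$, $m=1$, the choice $\delta=-\beta$, and the splitting $\ell_1=p$, $\ell_2=p'$, $m_1=q$, $m_2=q'$. The integrability condition $1+\ell^{-1}=\ell_1^{-1}+\ell_2^{-1}$ is satisfied since $p^{-1}+(p')^{-1}=1$, and the summability condition $m_1^{-1}\ge (m^{-1}-m_2^{-1})\vee 0$ reads $q^{-1}\ge (1-(q')^{-1})\vee 0=q^{-1}$, which holds with equality. This yields, for every $s\in(t,T]$,
\begin{equation*}
|\mathcal B^\varepsilon_{\brho_{t,\mu}^\varepsilon}(s,\cdot)|_{B^0_{\infty,1}}\le \cv\, |b^\varepsilon(s,\cdot)|_{B^\beta_{p,q}}\,|\brho_{t,\mu}^\varepsilon(s,\cdot)|_{B^{-\beta}_{p',q'}}.
\end{equation*}

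Then I would raise this estimate to the power $r_0$ and integrate on $(t,S]$. For $r_0\in \big(0,(1/r+1/\overline{r})^{-1}\big]$, define $\tilde r$ through $1/\tilde r=1/r_0-1/r-1/\overline{r}\ge 0$, so that $r_0/r+r_0/\overline{r}+r_0/\tilde r=1$. A triple Hölder inequality in time then gives
\begin{equation*}
\Big(\int_t^S |b^\varepsilon(s,\cdot)|_{B^\beta_{p,q}}^{r_0}|\brho_{t,\mu}^\varepsilon(s,\cdot)|_{B^{-\beta}_{p',q'}}^{r_0}ds\Big)^{1/r_0}\le (S-t)^{1/\tilde r}\,|b^\varepsilon|_{L^r(B^\beta_{p,q})}\,|\brho_{t,\mu}^\varepsilon|_{L^{\overline{r}}((t,S],B^{-\beta}_{p',q'})}.
\end{equation*}
Recognising $1/\tilde r = 1/r_0-(1/r+1/\overline{r})$ and using the uniform control $|b^\varepsilon|_{L^r(B^\beta_{p,q})}\le \cc\,|b|_{L^r(B^\beta_{p,q})}$ provided by Proposition \ref{PROP_APPROX}, one arrives at \eqref{unifestim_drift_gencase}.

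There is no real obstacle here: the only delicate point is bookkeeping of the Young parameters so as to produce the target space $B^0_{\infty,1}$ while absorbing the singularity $\beta$ of the kernel against the (companion) negative regularity $-\beta$ of the density. The uniformity in $\varepsilon$ then follows immediately from the uniform $L^r(B^\beta_{p,q})$-control of $b^\varepsilon$ given by Proposition \ref{PROP_APPROX} and the $\varepsilon$-uniform bound on $|\brho_{t,\mu}^\varepsilon|_{L^{\overline{r}}(B^{-\beta}_{p',q'})}$ established in Lemma \ref{lem_unifesti_gencase_2}.
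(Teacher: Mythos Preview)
Your proof is correct and follows essentially the same route as the paper: apply the Young inequality \eqref{YOUNG} pointwise in time to get $|\mathcal B^\varepsilon_{\brho_{t,\mu}^\varepsilon}(s,\cdot)|_{B^0_{\infty,1}}\le \cv\,|b^\varepsilon(s,\cdot)|_{B^\beta_{p,q}}\,|\brho_{t,\mu}^\varepsilon(s,\cdot)|_{B^{-\beta}_{p',q'}}$, then H\"older in time. The only cosmetic difference is that the paper performs the time H\"older in two steps (first $L^{r_0}:L^r-L^{rr_0/(r-r_0)}$, then a further H\"older if $\bar r>rr_0/(r-r_0)$), whereas you do it in one triple H\"older; and you make the final passage $|b^\varepsilon|_{L^r(B^\beta_{p,q})}\le \cc\,|b|_{L^r(B^\beta_{p,q})}$ explicit, which the paper leaves implicit.
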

\begin{proof}
From the Young inequality \eqref{YOUNG}, one gets for all $s\in (t,T] $:
$$|\mathcal B^\varepsilon_{\brho_{t,\mu}^\varepsilon}(s,\cdot)|_{B^0_{\infty,1}}\le \textcolor{black}{c_{\mathbf{Y}}}|b^\varepsilon(s,\cdot)|_{B_{p,q}^\beta} 
|\brho_{t,\mu}^\varepsilon(s,\cdot)|_{B_{p',q'}^{-\beta}}.
$$
Take now $r_0$ as indicated, then $r>r_0 $ and use the H\"older inequality to derive:
$$|\mathcal B^\varepsilon_{\brho_{t,\mu}^\varepsilon}|_{L^{r_0}((t,S],B^0_{\infty,1})}\le \textcolor{black}{c_{\mathbf{Y}}}|b^\varepsilon|_{L^r(B^{\beta}_{p,q})}\Big(\int_t^T ds |\brho_{t,\mu}^\varepsilon(s,\cdot)|_{B_{p',q'}^{-\beta}}^{r_0\frac{r}{r-r_0}}\Big)^{\frac 1{r_0}-\frac {1}r}.$$
Since $r_0\in\big(0,\big(\frac 1{r}+\frac 1{\overline{r}}\big)^{-1}\big]$, then $\bar r\ge \frac{rr_0}{r-r_0} $. If the equality holds the claim directly follows from Lemma \ref{lem_unifesti_gencase_2}. If $\bar r>\frac{rr_0}{r-r_0} $ an additional H\"older inequality \textcolor{black}{in time} is needed. Indeed,
$$|\mathcal B^\varepsilon_{\brho_{t,\mu}^\varepsilon}|_{L^{r_0}((t,S],B^0_{\infty,1})}\le \textcolor{black}{c_{\mathbf{Y}}}|b^\varepsilon|_{L^r(B^{\beta}_{p,q})}\Big(\int_t^T ds |\brho_{t,\mu}^\varepsilon(s,\cdot)|_{B_{p',q'}^{-\beta}}^{\bar r}\Big)^{\frac {1}{\bar r}} (T-t)^{\frac 1{r_0}-(\frac 1r+\frac1{\bar r})}.$$
\end{proof}

\paragraph{Existence results}

We recall that a measure $\mathbf P$ (a probability measure on the canonical space $\Omega_\alpha$) solves the non-linear martingale problem related to \eqref{main} on $[t,T] $ if:
\begin{itemize}
\item[(i)] $\mathbf P\circ x(t)^{-1}=\mu$;
\item[(ii)] for a.a. $s\in(t,T]$, $\mathbf P\circ x(s)^{-1}$ is absolutely continuous w.r.t. Lebesgue measure and its density belongs to $L^{r'}((t,T],B_{p',q'}^{-\beta})$.
\item[(iii)] for all $f$ in $\mathcal C^1([t,T],\mathcal C_{\textcolor{black}{0}}^2(\mathbb R^d))$, the process
\begin{equation}\label{MP_NL}
\Bigg\{f(s,x(s))-f(t,x(t))-\int_t^s \Big(\textcolor{black}{\partial_v f(v,x(v))}+\mathcal B_{\mathbf P\circ x(v)^{-1}}(v,x(v)) \cdot {\color{black}\nabla}
 f(v,x(v))+L^\alpha(f)(v,x(v))\Big)\,dv\Bigg\}_{t\le s\le T},\tag{${\rm MP}_{{\rm NL}}$}
\end{equation}
is a $\mathbf P$ martingale. 
\end{itemize}

\begin{rem}
We point out that in our singular drift setting it is rather natural to require some smoothness properties on the marginal laws of the canonical process of $\mathbf P$. Those are precisely the ones which allow to define properly almost everywhere the non-linear drift in \eqref{MP_NL}. Pay attention anyhow that even in this setting $\mathcal B_{\mathbf P\circ x(v)^{-1}}(v,x(v))$ is still potentially singular in time and we cannot directly appeal to well known results to ensure well-posedness.

Also, we will \textcolor{black}{from now on} identify the marginal law and its density, i.e. if $\mathbf P( x(v)\in dx):=\mathbf P_{t,\mu}(v,dx)=\brho_{t,\mu}(v,x)dx $, then we denote $\mathcal B_{\mathbf P\circ x(v)^{-1}}(\textcolor{black}{v},\cdot)=\mathcal B_{\brho_{t,\mu}}(v,\cdot) $.
\end{rem}

\begin{prop}\label{prop_ExistenceMain} Under the assumption \eqref{cond_gencase}, the solution to the non-linear martingale problem related to \eqref{main_smoothed} converges, in $\mathcal P(\Omega_\alpha)$ equipped with its weak topology, to a solution to the non-linear martingale problem related to \eqref{main}. 
\end{prop}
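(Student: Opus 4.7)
The approach is to combine compactness of the mollified martingale solutions with identification of the limit. I would first show that $(\mathbf P^\varepsilon)_{\varepsilon>0}$ is tight in $\mathcal P(\Omega_\alpha)$, next extract a subsequence $(\mathbf P^{\varepsilon_k})_k$ converging weakly to some $\mathbf P$, and finally verify that $\mathbf P$ satisfies the three conditions (i)--(iii) of the non-linear martingale problem associated with \eqref{main}.

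For tightness, since all starting laws equal $\mu$, it suffices to verify Aldous' criterion for the increments
\begin{equation*}
x(\tau+h)-x(\tau)=\int_\tau^{\tau+h}\!\mathcal B^\varepsilon_{\brho^\varepsilon_{t,\mu}}(v,x(v))\,dv+(\mathcal W_{\tau+h}-\mathcal W_\tau),
\end{equation*}
with $\tau\le T$ a stopping time and $h\ge 0$. The noise increment has the law of $\mathcal W_h$ and is thus uniformly controlled by stationarity and scaling. For the drift part, Lemma \ref{LEMME_FOR_TIGHTNESS_AND_IDENTIFICATION} combined with the embedding $B^0_{\infty,1}\hookrightarrow L^\infty$ gives, after selecting $r_0>1$ (which is allowed under \eqref{cond_gencase} by choosing $\bar r>r'$ in \eqref{cond_time_gencase}), the uniform control
\begin{equation*}
\Bigl|\!\int_\tau^{\tau+h}\mathcal B^\varepsilon_{\brho^\varepsilon_{t,\mu}}(v,x(v))\,dv\Bigr|\le h^{1-1/r_0}\,\bigl|\mathcal B^\varepsilon_{\brho^\varepsilon_{t,\mu}}\bigr|_{L^{r_0}(B^0_{\infty,1})}\le C\,h^{1-1/r_0},
\end{equation*}
uniformly in $\varepsilon$. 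Aldous' criterion then yields the tightness of $(\mathbf P^\varepsilon)_{\varepsilon>0}$.

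Extracting a subsequence $(\mathbf P^{\varepsilon_k})_k$ with weak limit $\mathbf P$, continuity of the evaluation at the (deterministic) initial time $t$ gives $\mathbf P\circ x(t)^{-1}=\mu$, which is condition (i). To identify the marginals at later times, I would use Lemma \ref{FIRST_STAB}: the convergence $\brho^{\varepsilon_k}_{t,\mu}\to\brho_{t,\mu}$ in $L^{\bar r}(B^{-\beta+\vartheta\Gamma}_{p',q'})\cap L^\infty((t,T],L^1)$, combined with \eqref{BesovEmbedding}, implies (up to a further extraction) a.e.-in-time convergence in $L^1$ of $\brho^{\varepsilon_k}_{t,\mu}(s,\cdot)$ towards the probability density $\brho_{t,\mu}(s,\cdot)$, together with $\brho_{t,\mu}\in L^{r'}(B^{-\beta}_{p',q'})$. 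Since $\mathbf P^{\varepsilon_k}\circ x(s)^{-1}=\brho^{\varepsilon_k}_{t,\mu}(s,\cdot)\,dy$ and weak convergence passes to the marginals, this gives $\mathbf P\circ x(s)^{-1}=\brho_{t,\mu}(s,\cdot)\,dy$ for a.e.\ $s\in(t,T]$, which is condition (ii).

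The hard part will be to pass to the limit in the martingale identity \eqref{MP_NL}. For $f\in\mathcal C^1([t,T],\mathcal C^2_0(\R^d))$ and $G$ bounded continuous on $\Omega_\alpha|_{[t,s_1]}$, the boundary terms $f(s_i,x(s_i))$ and the integrals involving $\partial_v f$ and $L^\alpha f$ define continuous bounded functionals on $\Omega_\alpha$ (for $s_1,s_2$ outside the at most countable set of fixed discontinuities of $\mathbf P$ in the pure-jump case), hence pass to the limit by weak convergence. For the drift integral, I would first split
\begin{equation*}
\mathcal B^{\varepsilon_k}_{\brho^{\varepsilon_k}_{t,\mu}}-\mathcal B_{\brho_{t,\mu}}=(b^{\varepsilon_k}-b)\star\brho^{\varepsilon_k}_{t,\mu}+b\star(\brho^{\varepsilon_k}_{t,\mu}-\brho_{t,\mu}),
\end{equation*}
and use that under $\mathbf P^{\varepsilon_k}$ the marginal at time $v$ is $\brho^{\varepsilon_k}_{t,\mu}(v,\cdot)\,dy$, to bound the resulting contribution by $\|\nabla f\|_\infty\int_{s_1}^{s_2}|\mathcal B^{\varepsilon_k}_{\brho^{\varepsilon_k}_{t,\mu}}(v,\cdot)-\mathcal B_{\brho_{t,\mu}}(v,\cdot)|_{L^\infty}\,dv$; by Besov duality and H\"older in time, exactly along the lines of Lemma \ref{FIRST_STAB}, Proposition \ref{PROP_APPROX} and Lemma \ref{FIRST_STAB} force this bound to $0$ as $k\to\infty$. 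To compare the remaining integral with its $\mathbf P$-analogue, the obstruction is that $\mathcal B_{\brho_{t,\mu}}\in L^{r_0}(B^0_{\infty,1})$ is only measurable in the spatial variable, so the associated path functional is not a priori continuous on $\Omega_\alpha$; I would resolve this by a spatial mollification $\mathcal B_{\brho_{t,\mu}}\star_y \zeta_\delta$, which is continuous and bounded in $y$ so that the corresponding path functional is continuous bounded on $\Omega_\alpha$ and weak convergence applies for fixed $\delta$. The mollification error is controlled uniformly in $k$ by $\int_{s_1}^{s_2}|\mathcal B_{\brho_{t,\mu}}\star_y\zeta_\delta-\mathcal B_{\brho_{t,\mu}}|_{L^\infty}\,dv$, which vanishes as $\delta\to 0$ thanks to the $B^0_{\infty,1}$ regularity. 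Combining these estimates delivers the martingale identity for $\mathbf P$ and, together with Lemma \ref{lem_ex_duha_gencase} identifying the marginals as the solution of the non-linear Fokker--Planck equation, concludes the proof.
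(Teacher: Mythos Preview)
Your proposal is correct and follows essentially the same route as the paper: tightness via Aldous' criterion using the uniform $L^{r_0}(B^0_{\infty,1})$ bound on the mollified drift (Lemma \ref{LEMME_FOR_TIGHTNESS_AND_IDENTIFICATION}), identification of the marginals via Lemma \ref{FIRST_STAB}, and passage to the limit in the martingale identity using the $L^{r_0}(L^\infty)$ convergence of $\mathcal B^{\varepsilon_k}_{\brho^{\varepsilon_k}_{t,\mu}}$ to $\mathcal B_{\brho_{t,\mu}}$. The only noteworthy difference is that where the paper invokes \cite[Lemma 5.1]{EthierKurtz-85} to pass the drift integral to the limit in one stroke, you make the argument explicit via a spatial mollification of $\mathcal B_{\brho_{t,\mu}}$ --- this is correct (functions in $B^0_{\infty,1}$ are uniform limits of their Littlewood--Paley partial sums, hence uniformly continuous, so $|\mathcal B_{\brho_{t,\mu}}\star_y\zeta_\delta-\mathcal B_{\brho_{t,\mu}}|_{L^\infty}\to 0$) and arguably clearer than the paper's terse citation.
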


\begin{proof}[Proof of Proposition \ref{prop_ExistenceMain}] Let $(\mathbf P^{\varepsilon_n})_{n\ge 0}$ be the sequence of solutions to the non-linear martingale problem related to \eqref{main_smoothed} where $\varepsilon_n \underset{n}{\rightarrow} 0 $. 

\noindent\textit{Tightness.} 
\textcolor{black}{We recall that if $\alpha=2 $, the Kolmogorov criterion is a necessary and sufficient condition to derive tightness of the sequence $ (\mathbf P^{\varepsilon_n})_{n\ge 0}$ in $\mathcal C([t,T],\R^d) $, see e.g. \cite[Theorem 7.3]{Billingsley-99}. It reads that for any $\eta>0 $ there exist $a$ and $n_0$ s.t. for all $n\ge n_0 $,
\begin{equation}\label{I_K}
\mathbf P^{\varepsilon_n}[|X^{\varepsilon_n,t,\mu}_t|=|\xi|\ge a]\le \eta,
\end{equation}
and for all $\eta>0,\epsilon>0$ there exist $\delta\in (0,1) $ and $n_0$ s.t. for all $n\ge n_0 $,
\begin{equation}\label{P_K}
 \mathbf P^{\varepsilon_n}[\sup_{|s-v|\le \delta}|X^{\varepsilon_n,t,\mu}_s-X^{\varepsilon_n,t,\mu}_v|\ge \epsilon] \le \eta .
 \end{equation}
On the other hand, when $\alpha<2 $, the Aldous tightness criterion provides the tightness of $ (\mathbf P^{\varepsilon_n})_{n\ge 0}$ in $\mathbb D([t,T],\R^d) $, see e.g. \cite[Theorem 16.10]{Billingsley-99}. The criterion reads as follows:
\begin{equation}\label{I_A}
\lim_{a\rightarrow\infty}\mathbf P^{\varepsilon_n}[\sup_{s\in [t,T]} |X_s^{\varepsilon_n,t,\mu}|\ge a]=0,
\end{equation}
and for all $\eta>0,\epsilon>0$ there exist $\delta_0,n_0$ s.t. for all $0\le \delta\le \delta_0, n\ge n_0 $ and any stopping time $t\le \tau\le T $:
\begin{align}\label{P_A}
\mathbf P^{\varepsilon_n}[|X^{\varepsilon_n,t,\mu}_{\tau+\delta}-X^{\varepsilon_n,t,\mu}_\tau|\ge \epsilon] \le \eta .
\end{align}
Observe now that
the integrability conditions \eqref{I_K} or \eqref{I_A} are indeed fulfilled in the corresponding case. It is direct for \eqref{I_K}. Also, from the dynamics in \eqref{main_smoothed}
\begin{align*}
|X_s^{\varepsilon_n,t,\xi}| &\le |\xi| + \int_t^s |\mathcal B^{\varepsilon_n}_{\bmu^{\varepsilon_n,t,\mu}_v}(v,X_v^{\varepsilon_n,t,\xi})| dv + |\mW_s-\mW_t|\\
&\le |\xi| + \int_t^s \|\mathcal B^{\varepsilon_n}_{\bmu^{\varepsilon_n,t,\mu}_v}(v,\cdot)\|_{L^\infty} dv + |\mW_s-\mW_t|
\underset{\eqref{EMBEDDING}}{\le} |\xi| + \int_t^s \|\mathcal B^{\varepsilon_n}_{\bmu^{\varepsilon_n,t,\mu}_v}(v,\cdot)\|_{B_{\infty,1}^0} dv + |\mW_s-\mW_t|\\
&\le |\xi| +|\mathcal B^{\varepsilon_n}_{\brho_{t,\mu}^{\varepsilon_n}}|_{L^{r_0}((t,s],B^0_{\infty,1})}(s-t)^{\frac 1{r_0'}}+ |\mW_s-\mW_t|\\
&\le |\xi| +C|b|_{L^r(B^{\beta}_{p,q})}|\brho_{t,\mu}^{\varepsilon_n}|_{L^{\overline{r}}((t,T],B^{-\beta}_{p',q'})}(T-t)^{1-\Big(\frac 1{r}+\frac 1{\overline{r}}\Big)}+|\mW_s-\mW_t|,
\end{align*}
using \eqref{unifestim_drift_gencase} from Lemma \ref{LEMME_FOR_TIGHTNESS_AND_IDENTIFICATION} for the last inequality. From the uniform control, w.r.t. $\varepsilon_n $, of Lemma \ref{lem_unifesti_gencase_2} we eventually obtain:
\begin{align*}
|X_s^{\varepsilon_n,t,\xi}| \le |\xi| +C|b|_{L^r(B^{\beta}_{p,q})}(T-t)^{1-\Big(\frac 1{r}+\frac 1{\overline{r}}\Big)}+|\mW_s-\mW_t|,
\end{align*}
from which \eqref{I_A} now easily follows. Observe that similar arguments yield:
\begin{align*}
|X_{\tau+\delta}^{\varepsilon_n}-X_{\tau}^{\varepsilon_n}|\le C|b|_{L^r(B^{\beta}_{p,q})}\delta^{1-\Big(\frac 1{r}+\frac 1{\overline{r}}\Big)}+|\mW_{\tau+\delta}-\mW_{\tau}|,
\end{align*}
and the statements \eqref{P_K} and \eqref{P_A} then easily follow from the properties of the stable driving process}. 


\noindent\textit{Limit points.} Let $(\mathbf P^{\varepsilon_k})_k$ be a converging subsequence and let $\mathbf P$ be its limit. In addition to the weak convergence of $(\mathbf P^{\varepsilon_k})_k$ toward\textcolor{black}{s} $\mathbf P$, according to Lemma \ref{FIRST_STAB}, the 
{\color{black}marginal distributions} $\big(\mathbf P^{\varepsilon_k}_{t,\mu}(s,dx)=\brho^{\varepsilon_k}_{t,\mu}(s,x)\,dx\big)_k$ \textcolor{black}{converge} strongly toward\textcolor{black}{s} $\mathbf P_{t,\mu}(s,dx)=\brho_{t,\mu}(s,x)\,dx$ in $L^{r'}(B^{-\beta}_{p',q'})$. This strong convergence implies the convergence of $(\mathcal B^{\varepsilon_k}_{\brho^{\varepsilon_k}_{t,\mu}})_k$ toward\textcolor{black}{s} $\mathcal B_{\brho_{t,\mu}}$ in $L^{r_0}(L^\infty)$. Indeed, for $r_0$ as in \eqref{unifestim_drift_gencase}, and proceeding as in the proof of Lemma \ref{LEMME_FOR_TIGHTNESS_AND_IDENTIFICATION}:
\begin{eqnarray*}
&&|\mathcal B^{\varepsilon_k}_{\brho^{\varepsilon_k}_{t,\mu}}-\mathcal B_{\brho_{t,\mu}}|_{L^{r_0}(L^\infty)}\textcolor{black}{\underset{\eqref{EMBEDDING}}{\le}}\textcolor{black}{C} |\mathcal B^{\varepsilon_k}_{\brho^{\varepsilon_k}_{t,\mu}}-\mathcal B_{\brho_{t,\mu}}|_{L^{r_0}(B_{\infty,1}^0)}\\
&\le& C\Bigg(|\brho^{\varepsilon_k}_{t,\mu}|_{L^{\textcolor{black}{\bar{r}}}(B^{-\beta\textcolor{black}{+\vartheta\Gamma}}_{p',q'})}|b^{\varepsilon_k}-b|_{L^{\textcolor{black}{\check r}}(B^{\beta\textcolor{black}{-\vartheta\Gamma}}_{p,q})}+
|b|_{L^r(B^{\beta}_{p,q})}|\brho^{\varepsilon_k}_{t,\mu}-\brho_{t,\mu}|_{L^{\textcolor{black}{\bar{r}}}(B^{-\beta}_{p',q'})}\Bigg)\underset{k}\to 0,
\end{eqnarray*}
\textcolor{black}{where, as in the proof of Lemma \ref{FIRST_STAB}, $\check r=r $ if $r<+\infty$ and any finite $\check r$ large enough if $r=+\infty $, i.e. the conjugate exponent $\check r' $ belongs to the interval indicated after \eqref{LEMME_2_WITHOUT_SING_2}. The previous convergence follows from Proposition \ref{PROP_APPROX} and Lemmas \ref{lem_unifesti_gencase_2} and \ref{FIRST_STAB}}.

This also  yields, as a natural extension, the variational limit.  For all $\Phi\in \textcolor{black}{C_0^\infty}((t,T)\times \R^d,\R^d)$,
\[
\lim_{k\rightarrow \infty}\int_t^T \int \mathcal B^{\varepsilon_k}_{\brho^{\varepsilon_k}_{t,\mu}}(s,x)\cdot \Phi(s,x)\,dx\,ds=
\int_t^T \int \mathcal B_{\brho^{}_{t,\textcolor{black}{\mu}}}(s,x)\cdot \Phi(s,x)\,dx\,ds\textcolor{black}{.}
\]
This last convergence is sufficient to ensure that (\textcolor{black}{see e.g. Lemma 5.1 in \cite{EthierKurtz-85}}): for any $0\le t_1\le \cdots \le t_i\le \cdots \le t_n\le t\le s\le T$, $\Psi_1,\cdots,\Psi_n$ continuous bounded, and for any $\phi$ of class $ C^2_{\textcolor{black}{0}} \textcolor{black}{(\R^d,\R)}$, 
\[
\mathbb E_{\mathbf P^{\varepsilon_k}}\!\!\left[\Pi_{i=1}^n\Psi_i(x(t_i))\!\int_t^s\!\mathcal B^{\varepsilon_k}_{\brho^{\varepsilon_k}_{t,\mu}}(v,x(v))\cdot \nabla \phi(x(v))\,dv\right]\to_k
\!\!\mathbb E_{\mathbf P^{}}\left[\Pi_{i=1}^n\Psi_i(x(t_i))\!\int_t^s\!\mathcal B_{\brho^{}_{t,\textcolor{black}{\mu}}}(v,x(v))\cdot \nabla \phi(x(v))\,dv\right].
\]
This \textcolor{black}{precisely gives} that $\mathbf P$ is solution of the non-linear martingale problem related to \eqref{main}.
\end{proof}

\paragraph{Weak uniqueness results}
\begin{prop}[Uniqueness result]\label{prop_UniquenessMain_gencas} Under the assumption \eqref{cond_gencase}, for any $(t,\mu)$ in $[0,T] \times \mathcal P(\R^d)$ the SDE \eqref{main} admits at most one weak solution 
 s.t. its marginal laws $\big(\bmu^{t,\mu}_s(\cdot)\big)_{s\in [t,T]} $ have a density for a.e. $s$ in $(t,T]$, i.e. $\bmu^{t,\mu}_s(dx) = \brho_{t,\mu}(s,x)dx$
and
$\brho_{t,\mu}\in L^{\bar r}(B_{p',q'}^{-\beta})$ for any $\bar {r}$ satisfying \eqref{cond_time_gencase}. 
\end{prop}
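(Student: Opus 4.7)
The strategy is to reduce the non-linear problem to a linear one by first fixing the measure flow, and then applying a Krylov--R\"ockner type weak uniqueness result in the linear setting.

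Let $\mathbf P^{(1)}$ and $\mathbf P^{(2)}$ be two weak solutions of \eqref{main} with the same initial law $\mu$, and denote by $\brho^{(1)}_{t,\mu}$ and $\brho^{(2)}_{t,\mu}$ their respective marginal densities, which by assumption both lie in $L^{\bar r}(B_{p',q'}^{-\beta})$. The first step is to check that each $\brho^{(i)}_{t,\mu}$ is a distributional solution of the non-linear Fokker--Planck equation \eqref{NL_PDE_FK} with initial condition $\mu$. This follows from an It\^o--type argument applied to $\phi(x(s))$ for $\phi \in C_0^\infty(\R^d,\R)$: since $\mathcal B_{\brho^{(i)}_{t,\mu}} \in L^{r_0}(B_{\infty,1}^0)$ by (the same proof as) Lemma \ref{LEMME_FOR_TIGHTNESS_AND_IDENTIFICATION}, the compensated martingale identity \eqref{MP_NL} can be tested against time-dependent test functions and integrated against $\mathbf P^{(i)}$ to yield the weak formulation of \eqref{NL_PDE_FK}.

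Having identified each $\brho^{(i)}_{t,\mu}$ as a distributional solution to \eqref{NL_PDE_FK} lying in $L^{\bar r}(B_{p',q'}^{-\beta}) \subset L^{r'}(B_{p',q'}^{-\beta})$ (noting $\bar r \geq r'$ and the interval is valid with $\vartheta = 0$), Lemma \ref{lem_unique_FK_gencase} applies and gives $\brho^{(1)}_{t,\mu} = \brho^{(2)}_{t,\mu} =: \brho_{t,\mu}$ in the sense of almost-everywhere equality. This common flow of marginal densities in turn yields a common non-linear drift $\mathcal B_{\brho_{t,\mu}}$, which by Lemma \ref{LEMME_FOR_TIGHTNESS_AND_IDENTIFICATION} belongs to $L^{r_0}(B_{\infty,1}^0) \hookrightarrow L^{r_0}(L^\infty)$ for some $r_0 > 1$ controlled by the parameters.

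At this stage, both measures $\mathbf P^{(1)}$ and $\mathbf P^{(2)}$ solve the \emph{linear} martingale problem associated with the (now frozen) drift $\mathcal B_{\brho_{t,\mu}}$ and the operator $L^\alpha$, starting from $\mu$. The condition \eqref{cond_gencase} ensures that this linear drift satisfies a Serrin--type integrability condition: indeed, for the Brownian case $\alpha=2$ the belonging to $L^{r_0}(L^\infty)$ directly falls within the scope of the Krylov--R\"ockner weak uniqueness result \cite{kryl:rock:05}, while in the pure jump stable case $\alpha \in (1,2)$ one invokes the analogous criterion (e.g., Chen--Song--Zhang or the results used in \cite{xie:zhan:20}) valid under Assumption \textbf{(UE)}. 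The main delicate point here is to verify that $r_0$ and the spatial integrability $L^\infty$ of the drift fall within the admissible range of these linear criteria -- this is guaranteed by \eqref{cond_gencase}, which was precisely designed so that one obtains enough regularity on $\brho_{t,\mu}$ via the dequadrification estimate of Lemma \ref{lem_unifesti_gencase_2} to push the drift into a supercritical (weak-uniqueness) regime. Applying the appropriate linear weak uniqueness result yields $\mathbf P^{(1)} = \mathbf P^{(2)}$ and completes the proof.
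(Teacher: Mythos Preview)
Your overall strategy is essentially the same as the paper's: first use uniqueness of the non-linear Fokker--Planck equation (Lemma~\ref{lem_unique_FK_gencase}, via Proposition~\ref{WP_FK}) to conclude that both solutions share the same marginal flow and hence the same drift $\mathcal B_{\brho_{t,\mu}}\in L^{r_0}(B_{\infty,1}^0)\hookrightarrow L^{r_0}(L^\infty)$, then reduce to weak uniqueness for a \emph{linear} martingale problem with this frozen drift.

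The one place where your argument diverges from the paper---and where there is a genuine gap---is the linear weak uniqueness step in the pure jump case $\alpha\in(1,2)$. You write that one can invoke ``the analogous criterion (e.g., Chen--Song--Zhang or the results used in \cite{xie:zhan:20}) valid under Assumption \textbf{(UE)}''. This is not quite right: the results of \cite{xie:zhan:20} are stated for \emph{rotationally invariant} (isotropic) $\alpha$-stable noise and are used in the paper only for \emph{strong} uniqueness under the more restrictive assumption~\eqref{COS}. They do not directly cover the general symmetric L\'evy measure satisfying \textbf{(UE)} (which allows, e.g., cylindrical processes). The paper handles this by stating and proving its own criterion, Lemma~\ref{KRStable}: for $F\in L^{\sc}(L^\infty)$ with $\alpha/\sc<\alpha-1$, the linear martingale problem for $(\mu,L^\alpha,F)$ has at most one solution. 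The proof of that lemma relies on the backward PDE theory developed in \cite{CdRM-20} (which does work under \textbf{(UE)}), together with a mollification argument to pass to the limit in the martingale identity. The paper then checks explicitly that $r_0$ can be chosen with $\alpha/r_0<\alpha-1$ by taking $\bar r$ near the right endpoint of the interval in~\eqref{cond_time_gencase}. So your sketch is correct in spirit, but to make it complete in the generality of Theorem~\ref{THM_GEN} you would need either to supply a reference that genuinely covers weak uniqueness for linear SDEs with $L^{\sc}(L^\infty)$ drift driven by a general \textbf{(UE)} stable noise, or to reproduce the argument of Lemma~\ref{KRStable}.
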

\begin{proof}
According to Proposition \ref{WP_FK},
 any pair of solutions to the non-linear martingale problem related to \eqref{main} whose 
 {\color{black}{(time) marginal distributions}} lie in $L^{\bar{r}}(B^{-\beta}_{p',q'})$ have the same 
 {\color{black}marginal distributions.} The non-linear drift $\mathcal B_{\brho_{t,\mu}}(s,\cdot)$ is therefore, for a.e. $s$ in $[t,T]$, given by $b(s,\cdot)\star \brho_{t,\mu}(s,\cdot)$, $\brho_{t,\mu}$ being the unique solution in $L^{\textcolor{black}{\bar{r}}}(B^{-\beta}_{p',q'})$ of the non-linear Fokker-Planck equation \eqref{NL_PDE_FK}.
 
 As $\brho_{t,\mu}$ is in $L^{\textcolor{black}{\bar {r}}}(B^{-\beta}_{p',q'})$, one can further check that, similarly to \eqref{unifestim_drift_gencase}:
\begin{eqnarray}
|\mathcal B_{\brho_{t,\mu}}|_{L^{{\color{black}r_0}}((t,T),B_{\infty,1}^{0})}\le \cv |b|_{L^r(B^{\beta}_{p,q})}|\brho_{t,\mu}|_{L^{r_0}(B^{-\beta}_{p',q'})} \le \cv |b|_{L^r(B^{\beta}_{p,q})}|\brho_{t,\mu}|_{L^{{\color{black}\bar r}}(B^{-\beta}_{p',q'})}\label{drift_estim_gencase_a},
\end{eqnarray}
for any ${\color{black}r_0}$ and ${\color{black}\bar r}$ such that $(\textcolor{black}{ r_0})^{-1}=({\color{black}\bar r})^{-1}+(r)^{-1}$ and $r'\le {\color{black}\bar r}<(-\beta/\alpha+d/\alpha p)^{-1}$. Thus, the drift $\mathcal B_{\brho_{t,\mu}}$ in \eqref{main} is well defined and all weak solutions to \eqref{main} whose density belongs to  $L^{\textcolor{black}{\bar{r}}}(B^{-\beta}_{p',q'})$ share the same drift $\mathcal B_{\brho_{t,\mu}}$. Uniqueness of the non-linear martingale problem thus follows from uniqueness of the solution $\mathbf P$ (a probability measure on the canonical space $\Omega_\alpha$) to the (linear) martingale problem related to $(\mu, L^\alpha,F)$, \textcolor{black}{where $F=\mathcal B_{\brho_{t,\mu}} $}, defined as:
\begin{itemize}
\item[(i)] $\mathbf P\circ x(t)^{-1}=\mu$;
\item[(ii)] for a.a. $s\in(t,T]$, $\mathbf P\circ x(s)^{-1}$ is absolutely continuous w.r.t. Lebesgue measure;
\item[(iii)] for all $f$ in $\mathcal C^1([t,T],\mathcal C_{\textcolor{black}{0}}^2(\mathbb R^d))$, the process
\begin{equation}\label{MP}
\Bigg\{f(s,x(s))-f(t,x(t))-\int_t^s \Big(\textcolor{black}{\partial_v f(v,x(v))}+F(v,x(v)) \cdot {\color{black}\nabla}
 f(v,x(v))+L^\alpha(f)(v,x(v))\Big)\,dv\Bigg\}_{t\le s\le T},\tag{MP}
\end{equation}
is a $\mathbf P$ martingale. 
\end{itemize}
The following Lemma gives sufficient condition for the solution to the (linear) martingale problem \eqref{MP} to be unique.
\begin{lemme}\label{KRStable} Assume that $F$ is in $L^{\sc}(\textcolor{black}{L^\infty})$ with $1\le \sc\le \infty$ and 
\begin{equation}\label{CR_KR_INFTY}
\textcolor{black}{\frac \alpha{\sc}<\alpha-1}.
\end{equation}
Then the (\textcolor{black}{linear}) martingale problem related to the triplet $(\mu, L^\alpha,F)$ admits at most one solution.
\end{lemme}
To conclude, it thus only remains to check wether the condition stated in the above Lemma is satisfied by the non-linear drift $\textcolor{black}{F=}\mathcal B_{\brho_{t,\mu}}$. This latter is immediately implied by \eqref{drift_estim_gencase_a} as we have that $(s,x)\mapsto \mathcal B_{\brho_{t,\mu}}(s,x)$ is in $L^{{\color{black}r_0}}(B_{\infty,1}^{0})$ and, according to \eqref{EMBEDDING}, in $L^{{\color{black}r_0}}(L^\infty)$. From condition \eqref{cond_gencase} \textcolor{black}{and the definition of $\Gamma $ in \eqref{GAP}, taking $ \bar r\in \Bigg[\big( \frac 1\alpha(-\beta+\frac dp+\vartheta \Gamma)\big)^{-1},\big( \frac 1\alpha(-\beta+\frac dp)\big)^{-1} \Bigg),\ \vartheta \in (0,1)$}, it hence holds that  $\alpha/{\color{black}r_0} < \alpha-1$ which gives the previous criterion with $ \sc = {\color{black}r_0}$.
 \end{proof}
\begin{rem} \,
\begin{itemize}
\item (On the formulation of the  linear martingale problem \eqref{MP}). While the conditions $(i)$ and $(iii)$ are rather classical (see e.g. \cite{StrookVaradhan-06}, \cite{EthierKurtz-85}), the condition $(ii)$ is enforced in order to exhibit a short demonstration for the uniqueness criterion stated in Lemma \ref{KRStable} below. This condition enables  notably to take advantage of the results obtained in \cite{CdRM-20}.
\item (On the Krylov-R\"ockner like criterion in Lemma \ref{KRStable})\footnote{Observe that when $\alpha=2$, the condition \eqref{CR_KR_INFTY}, i.e. $\frac{\alpha}{\sc}<\alpha-1 $, indeed reads as a special case of the Krylov-R\"ockner criterion $\frac{2}{q}+\frac{d}{p}<\alpha-1=2-1=1 $ for $p=\infty,\sc=q$, from which strong uniqueness is also derived in the Brownian setting in \cite{kryl:rock:05}. Whence the name. The condition \eqref{CR_KR_INFTY}
here leads to weak uniqueness.
}. The criterion \eqref{CR_KR_INFTY} is a particular case of a condition appearing in \cite{CdRM-20}. Namely, 
it was established in the quoted work a uniqueness result for a modified form of the (linear) martingale problem associated with a drift $\mathcal B_{\brho_{t,\mu}}$ in $L^{\sc}(B^{\textcolor{black}{\beta_{{\rm lin}}}}_{\ell,m})$ with $m\in[1,\infty]$ and 
\[
\textcolor{black}{\beta_{\rm lin}}\in \textcolor{black}{\Big(}\frac{1}{2}\big(1-\alpha+\frac{d}{\ell}+\frac{\alpha}{\sc}\big),0\Big).
\] 
\textcolor{black}{The restriction $ \beta_{\rm lin}<0$ in this paper was essentially motivated to emphasize the distributional setting. However, the arguments and techniques initially introduced in \cite{CdRM-20} can be adapted to handle the case $ \beta_{\rm lin}=0$. This precisely allows to derive a weak uniqueness result for a "classical" martingale problem under the aforementioned Krylov-R\"ockner type criterion \eqref{CR_KR_INFTY}. This is the purpose of Lemma \ref{KRStable}}.
\end{itemize}
\end{rem}

\begin{proof}[Proof of Lemma \ref{KRStable}]
{\color{black}According to \eqref{EMBEDDING}, \textcolor{black}{the statement of the lemma will follow if we prove it for} a drift $F$ in $L^{\sc}((t,T],B^{0}_{\infty,1})$. Taking $\epsilon=1-\gamma>0$, meant to be very small, with 
\[
\gamma\in \bigg[\frac{1}{2}\big(3-\alpha 
+\frac{\alpha}{\sc}\big),1\bigg)\Leftrightarrow 1-\gamma\in \bigg(0,\frac{1}{2}\big(\alpha-1 
-\frac{\alpha}{\sc}\big)\bigg],
\]
we further recover the initial setting of \cite{CdRM-20}, \textcolor{black}{i.e.  from \eqref{BesovEmbedding}} $F\in L^{\sc}((t,T],B^{-\epsilon}_{\infty,1})$. \textcolor{black}{(Importantly, it
 is precisely the condition \eqref{CR_KR_INFTY}  which allows 
 {\color{black}this lifting}.}) {\color{black}Next,} from Theorem 2 in \cite{CdRM-20}, it follows that for any $t<T_0<\infty$ and any smooth bounded function $h$ on $[t,T_0]\times \mathbb R^d$, the  backward PDE
\begin{equation*}
\left\{
\begin{aligned}
&\partial_s u+F\cdot {D}u+L^\alpha u=-h\,\text{on}\,\textcolor{black}{(t,T_0)}\times\mathbb R^d,\\
&u(T_0,\cdot)=0,
\end{aligned}
\right.
\end{equation*}
admits a unique bounded (mild) solution in $C^{0,1}([0,T_0]\times\R^d,\R)$ for which ${D}u\in C_b^0([0,T_0],B_{\infty,\infty}^{\theta-1-\varepsilon'}) $
 for $\theta:=\gamma-1+\alpha-{\alpha}/{\sc}$ and for any $\varepsilon'>0 $ \textcolor{black}{, again meant to be small such that $\theta-1-\varepsilon'>0$ (this can be achieved from \eqref{CR_KR_INFTY} which gives $\theta -1=(\gamma-1)+\alpha- \alpha/{\sc}-1>1-\gamma=\varepsilon$ for $\varepsilon$ small enough and taking $\varepsilon'=\varepsilon$)}. In particular $u$ {\color{black}satisfies}: 
 for all $\varphi$ in $\textcolor{black}{C_0^\infty}((t,T_0]\times\mathbb R^d)$,
\begin{equation}\label{EDP_WEAK}
\int^{T_0}_{t} ds \int dx\textcolor{black}{\Big[}\,u(s,x)(\partial_s -L^\alpha)(\varphi)(s,x)-   \Big(F(s,x)\cdot Du(s,x)\Big) \textcolor{black}{\varphi}(s,x)- h(s,x) \textcolor{black}{\varphi}(s,x)\textcolor{black}{\Big]}=0.
\end{equation}
\textcolor{black}{Note that, according to the control previously obtained in \cite{CdRM-20}, and as $F$ is here in $L^{\sc}((t,T], \textcolor{black}{B_{\infty,1}^0}
)$, the first order term, $\Big(F\cdot {D}u\Big) \textcolor{black}{\varphi}$, is well-defined.}
Introduce now $\{\phi_n \}_n$, a sequence of compactly supported time-space mollifiers such that, for all $n$, $\textcolor{black}{\phi_n(\cdot)=n^{(d+1)}\phi(n\cdot)}$ where $\phi$ is {\color{black}a} symmetric, non-negative \textcolor{black}{function}, equal to $1$ on the unit ball of $\mathbb R^{d+1}$ and \textcolor{black}{s.t.} $\int ds\int dz\,\phi(s,z)=1$. Setting $u_n=\phi_n \otimes u $ - where, borrowing the notation of \cite{Friedman-64}, $\otimes$ stands for the time-space convolution product -  we now claim that \textcolor{black}{for $\mathbf P_1$ and $\mathbf P_2$ two solutions to the martingale problem related to $(\mu, L^\alpha,F)$ it holds that $\mathbf P^i$-a.s.} (for $i=1,2$), $\textcolor{black}{t<s}\le T_0$,
\[
\int_s^{T_0} (\partial_v+F\cdot {D} +L^\alpha)u_n(v,x(v)) \ dv=-\int_s^{T_0} h_n(v,x(v))dv+\int_s^{T_0} R_n(v,x(v))dv,
 \]
where $h_n=\phi_n \otimes h$ and
\[
R_n(v,x):=F(v,x)\cdot Du_n(v,x)-\phi_n\otimes \Big(F\cdot {D}u\Big)(v,x). 
\]
 This expression arises from 
writing the action of the differential operator on $u_n$ which gives, for all $(v,x)$ in $[\textcolor{black}{s},T_0]\times\mathbb R^d$, \textcolor{black}{for $n$ large enough}\footnote{\textcolor{black}{so that ${\rm supp}( \phi_n) \subset B(0,(s-t)/2)\times B_{\R^d}(0,1)\subset \R\times \R^d$}},
  \begin{eqnarray*}
&&(\partial_v+F(v,x)\cdot {D} +L^\alpha)(u\otimes \phi_n(v,x))\\
&=& (\partial_v+F(v,x)\cdot {D} +L^\alpha) \Big(\int_{}dw\int dz\, u(w,z) \phi_n(v-w,x-z)\Big)\\
&=&\int_{}dw\int dz \ u(\textcolor{black}{w},z) (\partial_v +L^\alpha)(\phi_n)(v-w,x-z)+F(v,x)\cdot {D}\Big(\int_{}dw\int dz \ u(w,z) \phi_n(v-w,x-z)\Big)\\
&=&\int_{}dw\int dz \ u(w,z) (-\partial_{\textcolor{black}{w}} +L^\alpha)(\phi_n)(v-w,x-z)-\int_{}dw\int dz\, u(\textcolor{black}{w},z) F(v,x)\cdot ({D}_{\textcolor{black}{z}}\phi_n)(v-w,x-z)\\
&=&-\int_{}dw\int dz \left\{F(w,z)\cdot {D}u(\textcolor{black}{w},z)+h(\textcolor{black}{w},z)\right\}\phi_n(v-w,x-z)\\
&&-\int_{}dw\int dz\, u(\textcolor{black}{w},z) F(v,x)\cdot ({D}_{\textcolor{black}{z}}\phi_n)(v-\textcolor{black}{w},x-z)\\
&=&\int_{}dw\int dz  [F(v,x)-F(w,z)]\cdot {D}u(w,z)  \,\phi_n(v-w,x-z)-h_n(v,x),
\end{eqnarray*}
\textcolor{black}{where we used \eqref{EDP_WEAK} with $\varphi(w,z)=\phi_n(v-w,x-z) $, which indeed belongs to ${C_0^\infty}((t,T_0]\times\mathbb R^d) $ for $n$ large enough (and fixed $s>t $), for the last but one equality}.
Also, the last equality follows from an integration by parts.
 The symmetry of $\phi_n$  eventually brings
\[
\int_{}dw\int dz  [F(v,x)-F(w,z)]\cdot {D}u(w,z)  \phi_n(v-w,x-z)=F(v,x)\cdot Du_n(v,x)-\phi_n\otimes \Big(F\cdot {D}u\Big)(v,x).
\]
Denoting by $\mathbf P_i^{s,x_0}$ the conditional probability of $\mathbf P^{s,x_0}_i$ given $\{x(s)=x_0\}$, and taking $f(s,x)=u_n(s,x)$ in \eqref{MP}, it follows that, for $i=1,2$,
\begin{equation}\label{TO_WEAK_LIMIT}
\mathbb E_{\mathbf P_i^{s,x_0}}\left[\textcolor{black}{u_n(T_0,x(T_0))}+\int_s^{T_0}h_n(v,x(v))\,dv\right]\textcolor{black}{-	u_n(s,x_0)}=\mathbb E_{\mathbf P_i^{s,x_0}}\left[\int_s^{T_0}R_n(v,x(v))\,dv\right].
\end{equation}
 Letting $n$ \textcolor{black}{tend} to infinity, by continuity and boundedness of $u$,  $u_n(T_0,x(T_0))$ converges a.s. to $0$ \textcolor{black}{as well as $\E[u_n(T_0,x(T_0))] $  (0 is the terminal condition of the backward equation). Similarly, $u_n(s,x_0)$ converges to $u(s,x_0) $. Also, again} by continuity and boundedness of $h$,
\[
\lim_{n}\mathbb E_{\mathbf P_i^{s,x_0}}\left[u_n(T_0,x(T_0))+\int_s^{T_0}h_n(v,x(v))\,dv\right]-u_n(s,x_0)=\mathbb E_{\mathbf P_i^{s,x_0}}\left[\int_s^{T_0}h(v,x(v))\,dv\right]-u(s,x_0).	
\]
We next claim that 
\begin{equation}\label{CTR_REMAINDER_MP}
\lim_{n}\mathbb E_{\mathbf P_i^{s,x_0}}\left[\int_s^{T_0} R_n(v,x(v))dv\right]= 0.
\end{equation}
To check this claim, following $(ii)$ denote by $\varrho^i(s,x_0;v,\cdot)$ the density function of \textcolor{black}{the canonical process of}
$\mathbf P_i^{s,x_0}$ at time $v$, and observe now that: 
\begin{eqnarray}
&&\Big{|}\E_{\mathbf P_i^{s,x_0}}\left[\int_s^{T_0} R_n(v,x(v)) \ dv \right]\Big{|}\notag\\
&= &  \Big{|} \int_s^{T_0} dv \int dy \ F(v,y)\cdot {D}u_n(v,y)\varrho^i(s,x_0;v,y)
-\int_s^{T_0} d\textcolor{black}{v} \int d\textcolor{black}{y} \ \phi_n\otimes \big(F\cdot {D}u\big)(v,y)\varrho^i(s,x_0;\cdot,\cdot)(v,y)\Big{|}\notag\\
&= &  \Big| \int_s^{T_0} dv \int dy \ F(v,y)\cdot {D}u_n(v,y)\varrho^i(s,x_0;v,y)-\int_{s}^{T_0} dv \int dy \ F(v,y)\cdot {D}u(v,y)\phi_n\otimes\varrho^i(s,x_0;\cdot,\cdot)(v,y)\Big|\notag\\
&= &    
\Big|\int_s^{T_0} dv \int dy \ F(v,y)\cdot \big[{D}u_n(v,y)-{D}u(v,y)\big]\varrho^i(s,x_0;v,y)\notag\\
&&+\int_s^{T_0} dv \int dy \ F(v,y)\cdot {D}u(v,y)\big[\varrho^i(s,x_0;v,y)-\phi_n\otimes\varrho^i(s,x_0;\cdot,\cdot)(v,y)\big]\Big|,\label{THE_SPLIT_DU_REMAINDER}
\end{eqnarray}
\textcolor{black}{expanding the convolution and using the symmetry of $\phi_n $ as well as the Fubini theorem for the second equality}.
At this stage, \textcolor{black}{\eqref{CTR_REMAINDER_MP}} is a consequence of the convergence of the mollified sequences $Du_n$ and $\phi_n\otimes\varrho^i(s,x_0;\cdot,\cdot)$ to $Du $ and $\varrho^i(s,x_0;\cdot,\cdot)$ respectively.

\textcolor{black}{For the first term in the r.h.s. of \eqref{THE_SPLIT_DU_REMAINDER} we will use the uniform H\"older continuity of the gradient in time and space (see Theorem 2 in \cite{CdRM-20}) which gives that for all $(\textcolor{black}{v},y)\in [s,T_0]\times \R^d $,
\begin{align*}
|{D}u_n(v,y)-{D}u(v,y)|&\le \int_{\R^{d+1}} dw dz |Du(v-w,y-z)-Du(v,y)|\phi_n(w,z) \\
&\le C[Du(\cdot,\cdot)]_{(\theta-1-\varepsilon')/\alpha ,\theta-1-\varepsilon'} \textcolor{black}{n^{-(\theta-1-\varepsilon')/\alpha}} \underset{n}{\longrightarrow } 0.
\end{align*}}

\textcolor{black}{Applying then the H\"older inequality, we derive
\begin{eqnarray}
&& 	\Big|\int_s^{T_0} dv \int dy \ F(v,y)\cdot \big[{D}u_n(v,y)-{D}u(v,y)\big]\varrho^i(s,x_0;v,y)\Big|\notag\\ 
&\le & |F|_{L^{\sc}(L^\infty)}\left(\int_s^{T_0}\,dv \left(\int \,dy \ \big|{D}u_n(v,y)-{D}u(v,y)\big|\varrho^i(s,x_0;v,y)\right)^{\sc'}\right)^{1/\sc'}\notag\\
& \le & |F|_{L^{\sc}(L^\infty)}
\left(\int_s^{T_0}\,dv \ \big|{D}u_n(v,\cdot)-{D}u(v,\cdot)\big|_{L^\infty}^{\sc'}\right)^{1/\sc'}\underset{n\rightarrow\infty}{\longrightarrow }0.
\label{FIRST_TERM_DU_SPLIT_DU_REMAINDER}
\end{eqnarray}
}

\textcolor{black}{To handle the second term in the r.h.s. of \eqref{THE_SPLIT_DU_REMAINDER} we will use} the continuity property of the shift operator \textcolor{black}{for} Lebesgue norms: for $f\in L^{\check \sc}(\mathbb R,L^\ell)$ with $1\le \ell<\infty$, $1\le \check \sc< \infty$ - non-necessarily equal - the mapping $(w,z)\mapsto |f(\cdot +w,\cdot+z)-f(\cdot,\cdot)|_{L^{\check \sc}(\mathbb R,L^\ell)}$ is continuous.  This property naturally yields 
$|f\otimes\phi_n-f|_{L^{\check \sc}(\mathbb R,L^\ell)}\rightarrow 0$ as $n$ tends to infinity.

\textcolor{black}{Now}, extending by zero $\varrho^i(s,x_0;\cdot,\cdot)$ outside $\textcolor{black}{[s,T_0]}\times\mathbb R^{d}$, $\varrho^i(s,x_0;\cdot,\cdot)$ can be viewed as an element of $L^{\sc'}(L^1)$ and so
 \begin{eqnarray*}
&& \Big|	\int_s^{T_0} dv \int dy \ F(v,y)\cdot {D}u(v,y)\big[\varrho^i(s,x_0;v,y)-\phi_n\otimes\varrho^i(s,x_0;\cdot,\cdot)(v,y)\big]\Big|\\
&\le & |F|_{L^{\sc}(L^\infty)}|{D}u|_{L^\infty(L^\infty)}  \left(\int_s^{T_0}\,dv \left(\int \,dy\Big|\varrho^i(s,x_0;v,\cdot)-\phi_n\otimes\varrho^i(s,x_0;\cdot,\cdot)(v,\cdot)\Big|_{L^1}\right)^{\sc'}\right)^{1/\sc'}\underset{n\rightarrow\infty}{\longrightarrow }0,
\end{eqnarray*}
\textcolor{black}{which, together with \eqref{FIRST_TERM_DU_SPLIT_DU_REMAINDER}, gives \eqref{THE_SPLIT_DU_REMAINDER}}.

Consecutively, for $i=1$ or $i=2$, the quantity
\[
\mathbb E_{\mathbf P_i^{s,x_0}}\left[\int_s^{T_0}h(v,x(v))\,dv\right]
\]
is uniquely given by $u(s,x_0)$ (passing to the limit in \eqref{TO_WEAK_LIMIT}). As $h$ can be taken in a determining class for probability measures, this grants immediately the equality of the 
{\color{black}{marginal distributions}} of $\mathbf P_1$ and $\mathbf P_2$, and by extension  the equality $\mathbf P_1=\mathbf P_2$ on $\Omega_\alpha$. \textcolor{black}{Indeed, as $h$ is a smooth bounded function, the fact that $\mathbb E_{\mathbf P_1^{s,x_0}}\left[\int_s^{T_0}h(v,x(v))\,dv\right]=\mathbb E_{\mathbf P_2^{s,x_0}}\left[\int_s^{T_0}h(v,x(v))\,dv\right] $ easily yields that $\mathbb E_{\mathbf P_1^{s,x_0}}\left[\psi(x(v))\right]=\mathbb E_{\mathbf P_2^{s,x_0}}\left[\psi(x(v))\right]$ for $v\in [s,T_0],\ \psi \in \Psi$ where $\Psi $ is a determining class of functions. This in turn allows to apply Theorem 4.2 in Chapter 4 of \cite{EthierKurtz-85}.}


 }

\end{proof}

\subsection{On strong solutions} The following Proposition \textcolor{black}{gives} conditions on the \textit{parameters} that ensure existence and uniqueness of a strong solution for the McKean-Vlasov SDE \eqref{main}.
\begin{prop}\label{Prop_STRONG_SOL}
Under the assumption \eqref{COS}, for any $(t,\mu)$ in $[0,T] \times \mathcal P(\R^d)$ there exists a unique strong solution to \eqref{main} such that its law $\bmu^{t,\mu}$ lies in $L^{\bar r}(B_{p',q'}^{-\beta})$ for any $\overline{r}$ satisfying \eqref{cond_time_gencase} and such that for a.e. $s$ in $(t,T]$, $\bmu^{t,\mu}_s(dx) = \brho_{t,\mu}(s,x)dx$.
\end{prop}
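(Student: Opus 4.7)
The plan is to reduce \eqref{main} to a \emph{linear} SDE with drift $\mathcal{B}_{\brho_{t,\mu}}$ frozen by the already-constructed unique weak solution (Proposition \ref{prop_UniquenessMain_gencas}), and then invoke the strong well-posedness results of Krylov-R\"ockner \cite{kryl:rock:05} for $\alpha=2$ and of Xie-Zhang \cite{xie:zhan:20} for $\alpha\in(1,2)$ under the rotational invariance assumption. The whole point of strengthening \eqref{cond_gencase} into \eqref{COS} is to ensure that the gap
\[
\Gamma = \beta - \bigl(1 - \alpha + \tfrac{d}{p} + \tfrac{\alpha}{r}\bigr) > 1 - \tfrac{\alpha}{2},
\]
which is precisely the H\"older regularity threshold required by \cite{xie:zhan:20} for pathwise uniqueness under an isotropic stable noise (and, in the Brownian case, translates into having more room than the Krylov-R\"ockner Serrin condition).

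The first step is to upgrade the regularity of the non-linear drift. Under \eqref{COS}, pick $\vartheta \in (0,1)$ close enough to $1$ so that $\gamma := \vartheta \Gamma > 1 - \alpha/2$. Lemma \ref{lem_unifesti_gencase_2} then yields $\brho_{t,\mu} \in L^{\bar r}(B_{p',q'}^{-\beta+\gamma})$ for $\bar r$ in the range prescribed by \eqref{cond_time_gencase}. Applying the Young inequality \eqref{YOUNG} to $\mathcal{B}_{\brho_{t,\mu}} = b \star \brho_{t,\mu}$, together with H\"older in time exactly as in Lemma \ref{LEMME_FOR_TIGHTNESS_AND_IDENTIFICATION}, produces
\[
|\mathcal{B}_{\brho_{t,\mu}}|_{L^{r_0}((t,T], B_{\infty,1}^{\gamma})} \le C \, |b|_{L^r(B_{p,q}^\beta)} \, |\brho_{t,\mu}|_{L^{\bar r}(B_{p',q'}^{-\beta+\gamma})} < \infty,
\]
with $1/r_0 = 1/r + 1/\bar r$. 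Through the embedding $B_{\infty,1}^{\gamma} \hookrightarrow C^{\gamma}$ (and \eqref{EMBEDDING}), the frozen drift thus lies in $L^{r_0}((t,T], C^{\gamma})$ with $\gamma > 1 - \alpha/2$.

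With this frozen drift, \eqref{main} reduces to the linear SDE $dX_s = \mathcal{B}_{\brho_{t,\mu}}(s, X_s)\,ds + d\mW_s$. For $\alpha = 2$, the drift sits in $L^{r_0}(L^\infty)$ with $r_0$ beyond the Serrin threshold $2/r_0<1$ (already coming from \eqref{cond_gencase}), so Krylov-R\"ockner \cite{kryl:rock:05} delivers pathwise uniqueness. For $\alpha \in (1,2)$ with $\mW$ rotationally invariant, the H\"older regularity $\gamma > 1 - \alpha/2$ places us in the framework of \cite{xie:zhan:20}, giving the same conclusion. In both cases, strong existence of the linear SDE combined with the weak uniqueness from Proposition \ref{prop_UniquenessMain_gencas} yields, via a classical Yamada-Watanabe argument, strong existence and uniqueness for \eqref{main}. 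The main obstacle is purely book-keeping: one has to verify that $\bar r$ can be chosen inside the admissible range of \eqref{cond_time_gencase} while $\vartheta$ is taken close enough to $1$ so that both $\vartheta \Gamma > 1-\alpha/2$ and the resulting $r_0 = (1/r + 1/\bar r)^{-1}$ meets the time-integrability hypothesis of \cite{xie:zhan:20} -- this is exactly the non-trivial calibration encoded in \eqref{COS}.
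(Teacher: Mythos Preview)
Your approach is essentially the same as the paper's: freeze the law to reduce \eqref{main} to a linear SDE, show via the Young inequality and the enhanced density estimate of Lemma \ref{lem_unifesti_gencase_2} that the frozen drift $\mathcal{B}_{\brho_{t,\mu}}$ carries Besov regularity of order $\gamma=\vartheta\Gamma>1-\alpha/2$ under \eqref{COS}, and then invoke \cite{kryl:rock:05}/\cite{xie:zhan:20}.

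The one substantive difference is the target space for the drift. You land $\mathcal{B}_{\brho_{t,\mu}}$ in $L^{r_0}(B_{\infty,1}^\gamma)\hookrightarrow L^{r_0}(C^\gamma)$, whereas the paper places it in $L^{\sc}(B_{\ell,1}^\gamma)$ for a large but \emph{finite} $\ell$ and then embeds $B_{\ell,1}^\gamma\hookrightarrow B_{\ell,2}^\gamma\hookrightarrow H^{\gamma,\ell}$. The reason is that Theorem 2.4 of \cite{xie:zhan:20}, as quoted, requires $\ell\in(2d/\alpha\vee 2,\infty)$ --- strictly finite --- and is formulated in Bessel potential spaces, so a drift that is merely $C^\gamma$ in space does not literally satisfy its hypothesis. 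Your route is cleaner in that it uses Lemma \ref{lem_unifesti_gencase_2} exactly as stated (with integrability index $p'$), but to close the argument you would need either to check that \cite{xie:zhan:20} extends to $\ell=\infty$ or to cite an alternative strong uniqueness result for stable SDEs with H\"older drifts. The paper's workaround, taking $\ell$ finite, forces via the Young exponents the density to be controlled in $B_{\ell_2,q'}^{-\beta+\gamma}$ for some $\ell_2<p'$; this is why the paper's proof ends with a brief re-run of the argument of Lemma \ref{lem_unifesti_gencase_2} at a slightly smaller integrability index (their claim $(\star)$), a step your version avoids.
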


\begin{proof}
As weak uniqueness holds, strong well-posedness follows from the strong well-posedness of the \emph{linear} version of the McKean-Vlasov SDE \eqref{main} where the law is frozen. This thus ends up to prove that the drift 
\begin{align}\label{DEF_NL_DR}
\mathfrak b: [0,T] \times \R^d \ni (s,x) \mapsto \mathfrak b(s,x) =  \int_{\R^d} b(s,x-y) \bmu_s^{t,\mu}(dy)=\int_{\R^d}{\color{black}dy \ }b(\textcolor{black}{s},x-y)\brho_{t,\mu}(s,y)
,
\end{align}
satisfies a Krylov and R\"ockner type condition, see \cite{kryl:rock:05} if $\alpha=2$, or the criterion in  \cite[Theorem 2.4]{xie:zhan:20} if $\alpha \in (1,2)$. This can be summarized as follows: 
\begin{itemize}
\item[(i)] If $\alpha=2$, then strong well-posedness holds if the drift $\mathfrak b$ defined in \eqref{DEF_NL_DR} lies in $L^{\sc}(L^\ell)$ with $\sc,\ell \in (2,\infty]$ such that
$$\frac 2{\sc} + \frac d\ell < 1.$$
\item[(ii)] If $\textcolor{black}{\alpha\in (1,2)}$, then strong well-posedness holds if the drift $\mathfrak b$ defined in \eqref{DEF_NL_DR} satisfies that $(I-\Delta)^{\gamma / 2}\mathfrak b\in L^{\sc}(L^{\ell})$, \textcolor{black}{or \textcolor{black}{expressed equivalently in terms of Bessel potential space}, $\mathfrak b \in L^{\sc} \textcolor{black}{(}H^{\gamma,\ell}\textcolor{black}{)} $}, where $\gamma,\ell$ and $\sc$ are such that
\begin{equation}\label{COND_II}
\gamma \in \Big(1-\frac \alpha 2,1\Big),\quad \ell \in \Big(\frac{2d}\alpha\vee 2,\infty\Big),\quad \sc \in \Big(\textcolor{black}{\frac{\alpha}{\alpha-1}},\infty\Big),\quad \frac{\alpha}{\sc}+\frac{d}{\ell}<\textcolor{black}{\alpha-1}.
\end{equation}
\end{itemize}
{\color{black}
Note that the condition on $\mathfrak b$ in $(ii)$ \textcolor{black}{will actually follow if we manage to prove} that $  \mathfrak b\in L^{\sc}(B_{\ell,\textcolor{black}{1}}^{ \gamma })$.
Indeed, $B_{\ell,\textcolor{black}{2}}^{\gamma} \hookrightarrow H^{\gamma,\ell}$ when $\ell \ge 2 $ \textcolor{black}{(see e.g. \cite[Th. 2.5.6 p.88]{Triebel-83a} and \cite[Rk .2 p.96]{Triebel-83b})} and we know from 
\eqref{BesovEmbedding} that for all $\gamma>0$,   
$B_{\ell,1}^{ \gamma } \hookrightarrow B_{\ell,\textcolor{black}{2}}^{ \gamma }$. 
\textcolor{black}{Observing as well that, in the case $\alpha=2$, since from \eqref{EMBEDDING},\eqref{BesovEmbedding}, $B_{\ell,1}^{ \gamma } \hookrightarrow B_{\ell,1}^{ 0 } \hookrightarrow L^\ell$, it suffices to prove that $  \mathfrak b\in L^{\sc}(B_{\ell,\textcolor{black}{1}}^{ \gamma })$ for $\alpha\in (1,2]$ to ensure that both conditions  $(i)$ and $(ii)$ hold}.

Write now  from \eqref{YOUNG} and for $\ell$ meant to be large (but finite\footnote{we restrict here to a finite $\ell $ to consider the cases $\alpha=2 $ and $\alpha\in (1,2) $ in a similar way. The case $\alpha=2 $ could also be handled more directly considering $ \ell=+\infty$.}):
\begin{align*}
|\mathfrak b(s,\cdot)|_{B_{\ell,1}^\gamma}\le |b(s,\cdot)|_{B_{p,q}^{\beta}}|\brho_{t,\mu}(s,\cdot)|_{B_{\textcolor{black}{\ell_2},q'}^{\gamma-\beta}},\text{ with } \textcolor{black}{\ell_2=p'\frac{1}{\frac{p'}{\ell}+1}}. 
\end{align*}
\textcolor{black}{Note that one can choose any $\ell_2<p'$ \textit{close} to $p'$, since again $ \ell$ is arbitrarily  large but finite}. 
Write from the H\"older inequality,
\begin{align}\label{CTR_NL_DRIFT_STABLE}
\int_t ^T{\color{black} ds \ }|\mathfrak b(s,\cdot)|_{B_{\ell,1}^\gamma}^{ \sc} 
\le \Big(\int_t^T {\color{black} ds \ }|b(s,\cdot)|_{B_{p,q}^{\beta}}^{ \sc a}
\Big)^{\frac 1a}\Big(\int_t^T{\color{black}ds \ }|\brho_{t,\mu}(s,\cdot)|_{B_{\textcolor{black}{\ell_2},q'}^{\gamma-\beta}}^{ \sc a'} 
\Big)^{\frac 1{a'}}, \ a^{-1}+(a')^{-1}=1.
\end{align}
We now claim that for all $\overline{\sc}$ lying in the interval in \eqref{cond_time_gencase},
\begin{eqnarray*}
 \forall \overline{\ell}<p',\, \forall \overline{\vartheta} \in [0,1),\, \exists \theta>0 \text{ s.t. } \int_t^T ds |\brho_{t,\mu}(s,\cdot)|_{B^{-\beta+ \textcolor{black}{\overline{\vartheta}} \Gamma}_{\overline{\ell},q'}}^{\overline{\sc}} \le C(T-t)^{\theta}. \qquad (\star)
\end{eqnarray*}
As $b\in L^{r}((t,T],B_{p,q}^\beta) $, choosing now $a$ such that $a \sc=r $ \textcolor{black}{which implies} $1/{a'}=1-1/{a}=1-{ \sc}/{r} \iff a'= r/({r- \sc})$, $\gamma=  \overline{\vartheta} \Gamma$ with $\overline{\vartheta} \in (0,1)$ and $\ell$ large enough, the claim \textcolor{black}{follows} provided the set of parameters $\vartheta,\, \ell$ and $\sc = \overline{\sc}/a'$ satisfying \textcolor{black}{\eqref{COND_II}} and the above condition is non-empty.

Firstly, the condition $\overline{\sc}=\sc a'$ \textcolor{black}{belongs} to the interval in \eqref{cond_time_gencase} writes
\begin{eqnarray*}
 \textcolor{black}{\sc a'=}\sc\frac{r}{r- \sc}<\left(\frac 1\alpha\Big[-\beta+\overline{\vartheta} \Gamma+\frac d{p}\Big]
\right)^{-1} &\iff &\left(\frac 1\alpha\Big[-\beta+\overline{\vartheta}\Gamma+\frac d{p}\Big]
\right)r<\frac{r}{ \sc}-1\\
&\iff& \sc<r\bigg\{1+ \left(\frac 1\alpha\Big[-\beta+\overline{\vartheta}\Gamma+\frac d{p}\Big]
\right)r\bigg\}^{-1}.
\end{eqnarray*}
Thus, if
\begin{equation}\label{C_STRONG_GEN_S}\tag{$S_I$}
\textcolor{black}{\frac{\alpha}{\alpha-1}}<r\bigg\{1+ \left(\frac 1\alpha\Big[-\beta+\overline{\vartheta}\Gamma+\frac d{p}\Big]r
\right)\bigg\}^{-1},
\end{equation}
there exists $\sc$ satisfying the two last condition in $(ii)$ and such that $\overline{\sc}=\sc a'$  lies in the interval in \eqref{cond_time_gencase}. Indeed, since $\ell$ can be taken arbitrarily large (but finite), the above condition  allows to obtain
$$\frac{\alpha}{\sc}+\frac{d}{\ell}<\textcolor{black}{\alpha-1}.$$
As \eqref{C_STRONG_GEN_S} equivalently rewrites:
$$r\left(1-\frac 1\alpha-\frac 1r\right)>\left[-\frac \beta \alpha+\frac{\overline{\vartheta}\Gamma}{\alpha}+\frac d{p\alpha}\right]r\iff \textcolor{black}{\beta >  1 -\alpha+ \frac dp +\frac \alpha r+\overline{\vartheta} \Gamma}
$$
and, from the condition \eqref{cond_gencase} and the definition of \eqref{GAP}, for all $\vartheta\in [0,1) $ it holds
$$\beta >  1 -\alpha+ \frac dp +\frac \alpha r+\vartheta \Gamma,$$
condition \eqref{C_STRONG_GEN_S} is always satisfied under our current assumptions.

Secondly, we need to check that
$$
\exists \overline{\vartheta} \in (0,1) \text{ such that } \overline{\vartheta}\Gamma = \gamma \in \left(1-\frac \alpha 2,1\right). 
$$
Under \eqref{COS} - which matches \eqref{cond_gencase} in the case $\alpha =2$ - we have
$$ \Gamma=\beta-\left(1-\alpha+\frac dp+\frac \alpha r\right)>1-\frac \alpha2\iff \beta>2-\frac 32 \alpha+\frac dp+\frac \alpha r,$$
so that $\overline{\vartheta} $ can be taken sufficiently close to $1$ in order to have $\gamma=\overline{\vartheta} \Gamma>1- \alpha/ 2 $. 

To conclude, it only remains to prove that $(\star)$ holds true. Observe that for $\alpha=2 $ and taking $\ell=+\infty,\ \ell_2=p' $ the control readily follows from the representation \eqref{main_DUHAMEL} in Proposition \ref{WP_FK}  and Lemmas \ref{lem_unifesti_gencase_2}-\ref{FIRST_STAB}. For $\alpha\in (1,2) $,
still starting from the representation \eqref{main_DUHAMEL} of $\brho_{t,\mu}$, one may thus restart the proof of Lemma \ref{lem_unifesti_gencase_2} up to \eqref{FOR_STRONG_UNIQUENESS} and choosing now therein $\ell_2 < p'$ for the free integrability parameter, $\gamma=-\beta+ \overline{\vartheta} \Gamma$. 
In such case, as $\ell_2 < p'$, we obtain a lower singularity (in time) for the estimate of the first term in the r.h.s. (see \eqref{SING_STABLE_HK}) in \eqref{FOR_STRONG_UNIQUENESS}. As the horizon time $T$ is assumed to be small ($T <1$) therein, the bounds remain valid and \textcolor{black}{the} estimate $(\star)$ follows.
}
\end{proof}

\appendix

\mysection{Proof of Proposition \ref{PROP_APPROX}}
We first focus on the spatial approximation. Consider a time homogeneous kernel $b\in B_{b,q}^{\beta}$, with $ \beta \in (-1,0]$, and define for $\varepsilon>0$, $b^\varepsilon(x)=\tilde P_\varepsilon^\alpha b(x)= {\color{black}\tilde p^\alpha(\varepsilon,\cdot)\star b(x)}$, \textcolor{black}{$\alpha\in (1,2] $ and consequently choose $\tilde \alpha=\alpha $ in \eqref{HEAT_CAR}}. We are going to prove that for all $ \tilde \beta<\beta$ then
\begin{equation}
\label{SPATIAL_BESOV}
\|b^\varepsilon-b\|_{B_{p,q}^{\tilde \beta}}\underset{\varepsilon\longrightarrow 0}{\longrightarrow} 0.
\end{equation}
To this aim, we first consider the thermic part $\mathcal T_{p,q}^{\tilde \beta}(b^\varepsilon-b)$ of $\|b^\varepsilon-b\|_{B_{p,q}^{\tilde \beta}}$. \textcolor{black}{From the embedding \eqref{BesovEmbedding} we can assume w.l.o.g. that $\tilde \beta-\beta+\alpha>0 $}
For the proof we temporarily \textcolor{black}{specify} the notation of the operator $\mathcal T_{p,q}^{\tilde \beta}$ into $\mathcal T_{p,q,n}^{\tilde \beta}$ making the index $n$ in \eqref{HEAT_CAR} now explicit.  
As $\tilde \beta< 0$, $n$ can be chosen freely over the set of non-negative integers, meanwhile, as $b$ is in $B_{b,q}^{\beta}$, $\mathcal T_{p,q,{m}}^{\beta}(b)<\infty$ for any integer ${m}>\beta$. In particular if $\beta<0$ then $m\ge 0$ and we can choose $m=n\ge 0$, while if $\beta=0$, it 
{\color{black}would be} enough to consider the case $n=0$, $m=1$. \textcolor{black}{But to keep the same parameters for all cases we first choose $n=m=1$. We first assume $q<\infty $. Then,}
\begin{align}
\Big(\mathcal T_{p,q,{1}}^{\tilde \beta}(b^\varepsilon-b)\Big)^q:=&\int_{0}^1 \frac{dv}{v}v^{({1}-\frac{\tilde \beta}{\alpha})q}\|\partial_v 
\tilde p^\alpha(v,\cdot)\star (b_\varepsilon-b)\|_{L^p}^q\notag\\
\le& 2^{q-1}\int_0^{2\varepsilon} \frac{dv}{v}v^{({1}-\frac{\tilde \beta}{\alpha})q} \Big(\|\partial_v
\tilde p^\alpha(v,\cdot)\star b_\varepsilon\|_{L^p}^q+\|\partial_v 
\tilde p^\alpha(v,\cdot)\star b\|_{L^p}^q \Big)\notag\\
&+\int_{2\varepsilon}^1 \frac{dv}{v}v^{({1}-\frac{\tilde \beta}{\alpha})q} \| (\partial_v 
\tilde p^\alpha(v+\varepsilon,\cdot)-\partial_v 
\tilde p^\alpha(v,\cdot))\star b\|_{L^p}^q=:\mathscr T_1+\mathscr T_2.\label{DECOUP_SEUIL_THERMIQUE_PART}
\end{align}
Since
$$ \|\partial_v
\tilde p^\alpha(v,\cdot)\star b_\varepsilon\|_{L^p}^q=\|\tilde p^\alpha(\varepsilon,\cdot)\star\partial_v 
\tilde p^\alpha(v,\cdot)\star b\|_{L^p}^q\le \|\partial_v
\tilde p^\alpha(v,\cdot)\star b\|_{L^p}^q,$$
 - such inequality following from the $L^1-L^p$ convolution inequality and recalling that $ \|\tilde p^\alpha(\varepsilon,\cdot)\|_{\color{black}L^1}=1$ - we derive {\color{black} for $\beta\in (-1,0]
 $}:
\begin{align}
\mathscr T_1\le& 2^{q}\int_0^{2\varepsilon} \frac{dv}{v}v^{({1}-\frac{\tilde \beta}{\alpha})q}\|\partial_v
\tilde p^\alpha(v,\cdot)\star b\|_{L^p}^q \le 2^{q} (2\varepsilon)^{\frac{-\tilde \beta+\beta}{\alpha}q}\int_0^{2\varepsilon} \frac{dv}{v}v^{({1}-\frac{ \beta}{\alpha})q}\|\partial_v
\tilde p^\alpha(v,\cdot)\star b\|_{L^p}^q\notag\\
\le & 2^{q} (2\varepsilon)^{\frac{-\tilde \beta+\beta}{\alpha}q} \big(\mathcal T_{p,q,{1}}^\beta(b)\big)^q\underset{\varepsilon \rightarrow 0}{\longrightarrow}0,\label{CTR_T1_APPR_THERM}
\end{align}
since $\tilde \beta<\beta $. 

Let us now turn to $\mathscr T_2 $ for which we get:
\begin{align}
\mathscr T_2\le& \int_{2\varepsilon}^1\frac{dv}{v}v^{({1}-\frac{\tilde \beta}{\alpha})q} \varepsilon^q\bigg|\int_0^1 d\lambda \partial_v^{{2}} \tilde p^\alpha(v+\varepsilon\lambda, \cdot)\star b\,\bigg|_{L^p}^q\notag\\
\le& \varepsilon^{\frac{-\tilde \beta+\beta}\alpha q}\int_{2\varepsilon}^1\frac{dv}{v}v^{({1}-\frac{\tilde \beta}{\alpha})q} \varepsilon^{q+\frac{\tilde \beta-\beta}\alpha q}\Big( \int_0^1 d\lambda \| \partial_v^2 \tilde p^\alpha(v+\varepsilon\lambda, \cdot)\star b\|_{L^p}\Big)^q\notag\\
\le & \varepsilon^{\frac{-\tilde \beta+\beta}\alpha q}\int_{2\varepsilon}^1\frac{dv}{v}v^{({2}-\frac{\beta}{\alpha})q}  \int_0^1 d\lambda \| \partial_v^{2} \tilde p^\alpha(v+\varepsilon\lambda, \cdot)\star b\|_{L^p}^q\notag\\
\le & \varepsilon^{\frac{-\tilde \beta+\beta}\alpha q}\int_0^1 d\lambda \int_{2\varepsilon}^1\frac{dv}{v}v^{({2}-\frac{\beta}{\alpha})q}  \| \partial_v^{{2}} \tilde p^\alpha(v+\varepsilon\lambda, \cdot)\star b\|_{L^p}^q\notag\\
\le & \varepsilon^{\frac{-\tilde \beta+\beta}\alpha q}\int_0^1 d\lambda \int_{\varepsilon(2+\lambda)}^{1+\lambda\varepsilon}\frac{d\tilde v}{(\tilde v-\lambda \varepsilon)}(\tilde v-\lambda \varepsilon)^{({2}-\frac{\beta}{\alpha})q}  \| \partial_{\tilde v}^{2} \tilde p^\alpha(\tilde v, \cdot)\star b\|_{L^p}^q\notag,
\end{align}
the last inequality following from the change of variables $\tilde v=v+\epsilon\lambda$.
Now, since $\tilde v\ge 2\varepsilon $, we have that for all $\lambda \in [0,1] $, $\tilde v-\lambda\varepsilon\ge \tilde v/2 $ so that:
\begin{align}
\mathscr T_2\le & 2\varepsilon^{\frac{-\tilde \beta+\beta}\alpha q}\int_0^1 d\lambda \int_{\varepsilon(2+\lambda)}^{1+\lambda\varepsilon}\frac{d\tilde v}{\tilde v}\tilde v^{({2}-\frac{\beta}{\alpha})q}  \| \partial_{\tilde v}^2 \tilde p(\tilde v, \cdot)\star b\|_{L^p}^q\notag\\
\le & 2\varepsilon^{\frac{-\tilde \beta+\beta}\alpha q} \Big[\big(\mathcal T_{p,q,2}^{\beta}(b)\big)^q+\int_0^1 d\lambda \int_{1}^{1+\lambda\varepsilon}\frac{d\tilde v}{\tilde v}\tilde v^{({2}-\frac{\beta}{\alpha})q}  \| \partial_{\tilde v}^{2} \tilde p(\tilde v, \cdot)\star b\|_{L^p}^q\Big]. \label{PROV_T2_THERM}
\end{align}
Recall now from \eqref{EMBEDDING} and \eqref{YOUNG} that:
\begin{align*}
\| \partial_{\tilde v}^{2} \tilde p(\tilde v, \cdot)\star b\|_{L^p}\le \| \partial_{\tilde v}^{2} \tilde p(\tilde v, \cdot)\star b\|_{B_{p,1}^0}\le {\color{black}\cv}\|b\|_{B_{p,q}^\beta}\|\partial_{\tilde v}^{2} \tilde p(\tilde v, \cdot)\|_{B_{1,1}^{-\beta}}\le C\|b\|_{B_{p,q}^\beta}\tilde v^{-{2}-\frac{\beta}{\alpha}},
\end{align*}
which plugged into \eqref{PROV_T2_THERM} yields:
\begin{align*}
\mathscr T_2\le & 2\varepsilon^{\frac{-\tilde \beta+\beta}\alpha q} \Big[\big(\mathcal T_{p,q,{2}}^{\beta}(b)\big)^q+ C \varepsilon \|b\|_{B_{p,q}^\beta}^q\Big].
\end{align*}
This,  together with \eqref{CTR_T1_APPR_THERM}, since $\mathcal T_{p,q,{2}}^{\beta}(b)<+\infty $, finally yields:
\begin{equation}\label{FINAL_THERMIC_PART}
 \Big(\mathcal T_{p,q,{1}}^{\tilde \beta}(b^\varepsilon-b)\Big) \underset{\varepsilon \rightarrow 0 }{\longrightarrow} 0.
 \end{equation}
It is easily seen that the previous arguments can be adapted to the case $q=+\infty $ , by modifying the decomposition \eqref{DECOUP_SEUIL_THERMIQUE_PART} accordingly to the splitting 
\begin{align*}
\mathcal T_{p,\infty,1}^{\tilde \beta}(b_\varepsilon-b)&:=\sup_{v \in (0,1]}v^{({1}-\frac{\tilde \beta}{\alpha})}\|\partial_v  
\tilde p^\alpha(v,\cdot)\star (b_\varepsilon-b)\|_{L^p}\\
&\le\sup_{v \in (0,2\epsilon]} v^{({1}-\frac{\tilde \beta}{\alpha})}\|\partial_v  
\tilde p^\alpha(v,\cdot)\star (b_\varepsilon-b)\|_{L^p}+\sup_{v \in (2\epsilon,1]}v^{({1}-\frac{\tilde \beta}{\alpha})}\|\partial_v  
\tilde p^\alpha(v,\cdot)\star (b_\varepsilon-b)\|_{L^p}.
\end{align*}

Also, the last argument can be adapted to control the non-thermic part of the Besov norm. Namely,
\begin{align} \label{NON_THERMIC_PART}
\|\hat \phi \star (b^\varepsilon-b)\|_{L^p}=\| (\tilde p^\alpha(\varepsilon,\cdot) \star\hat \phi-\hat \phi)\star b\|_{L^p}\le C\|b\|_{B_{p,q}^\beta}\|\tilde p^\alpha(\varepsilon,\cdot) \star\hat \phi-\hat \phi\|_{B_{1,1}^{-\beta}}\underset{\varepsilon \rightarrow 0 }{\longrightarrow} 0
\end{align}
from the smoothness of $\hat \phi $ and the continuity of the shift operator in $L^p$. The statement \eqref{SPATIAL_BESOV} eventually follows from \eqref{NON_THERMIC_PART} and \eqref{FINAL_THERMIC_PART}. {\color{black}Moreover, as the by-product of the above arguments, 
$$|b^{\textcolor{black}{\varepsilon}}|_{B^{\beta}_{p,q}}\textcolor{black}{\underset{\eqref{YOUNG}}{\le} \cv|\tilde p^\alpha(\varepsilon,\cdot)|_{B_{1,\infty}^0}|b|_{B^{\beta}_{p,q}}\underset{\eqref{EMBEDDING}}{\le}} \cc|\tilde p^\alpha(\varepsilon,\cdot)|_{L^1}|b|_{B^{\beta}_{p,q}}=\cc|b|_{B^{\beta}_{p,q}},$$ 
for all $\varepsilon>0$.}

Consider now a time dependent drift i.e. $b\in L^r((t,T], B_{p,q}^\beta) $. Then for almost any $s\in (t,T]$, $b(s,\cdot)\in  B_{p,q}^\beta$. Thus, setting $b_{{\rm sp}}^{\varepsilon}(s,\cdot)=\tilde P_\varepsilon^\alpha b(s,\cdot)  $, we 
{\color{black}have}
$$\|b_{{\rm sp}}^\varepsilon(s,\cdot)-b(s,\cdot)\|_{B_{p,q}^{\tilde \beta}}\underset{\varepsilon \rightarrow 0}{\longrightarrow}0 .$$
If $r=+\infty$, {\color{black}the uniform control of $b_{{\rm sp}}^{\varepsilon}(s,\cdot)_{B^{\tilde \beta}_{p,q}}$} readily yield{\color{black}s}
$$\|b_{{\rm sp}}^\varepsilon-b\|_{L^\infty ((t,T],B_{p,q}^{\tilde \beta})}\underset{\varepsilon \rightarrow 0}{\longrightarrow}0  .$$

If now $r<+\infty$, since from the previous arguments {\color{black} and from \eqref{YOUNG} and \eqref{BesovEmbedding},}
$$\|b_{{\rm sp}}^\varepsilon(s,\cdot)-b(s,\cdot)\|_{B_{p,q}^{\tilde \beta}}\|\le \|b_{{\rm sp}}^\varepsilon(s,\cdot)\|_{B_{p,q}^{\tilde \beta}}+\|b(s,\cdot)\|_{B_{p,q}^{\tilde \beta}} \le \textcolor{black}{(1+\cv)}\|b(s,\cdot)\|_{B_{p,q}^{\tilde \beta}}\le {\color{black}(1+\cv)}\|b(s,\cdot)\|_{B_{p,q}^{ \beta}}\in L^r((t,T]),$$
we get from the bounded convergence theorem that 
$$\int_{t}^T{\color{black}ds \ }\|b_{{\rm sp}}^\varepsilon(s,\cdot)-b(s,\cdot)\|_{B_{p,q}^{\tilde \beta}}^r 
\underset{\varepsilon \rightarrow 0}{\longrightarrow}0.$$

Introduce $ b^\varepsilon(s,\cdot)=(b_{{\rm sp}}^\varepsilon(\cdot,\cdot)* \eta_\varepsilon)(s)$ for a mollifier $\eta_\varepsilon(\cdot)=\varepsilon^{-{\color{black}1}}\eta(\cdot/\varepsilon) $ for some smooth compactly supported $\eta$, where $*$ stands for the convolution in the time argument. Considering this additional convolution in time then yields the statement. {\color{black}The uniform control of $|b^\varepsilon|_{L^{\bar r}(B^{\beta}_{p,q})}$ \textcolor{black}{is then clear}.}

\mysection{Proof of Lemma \ref{LEM_DUHAMEL_MOLL}}
\label{PROOF_LEM_DUHAMEL_MOLL}
{\color{black}
 Given $s$ in $(t,T]$ and $f$ a bounded and smooth function on $\mathbb R^d$, with bounded derivatives, the function $\phi(v,x):=p^\alpha_{s-v}\star f(x)$ is itself a smooth solution of the backward equation:
\begin{equation}\label{CORE_BACKWARD}
\left\{
\begin{aligned}
&\partial_v\phi(v,x)+L^\alpha(\phi)(v,x)=0\,\text{in}\,[t,s)\times\mathbb R^d,\\
&\phi(s,\cdot)=f(\cdot).
\end{aligned}
\right.	
\end{equation} 
It\^o's formula applied to $\phi(v,X^{\varepsilon,t,\mu}_v)$ over the interval $[t,s]$ yields, \textcolor{black}{in view of \eqref{CORE_BACKWARD}},
\begin{eqnarray*}
f(X^{\varepsilon,t,\mu}_s) =\phi(s,X^{\varepsilon,t,\mu}_s)
= \phi(t,\xi)+\int_t^s\mathcal B_{\brho^{\varepsilon}_{t,\mu}}^{\varepsilon}(v,X^{\varepsilon,t,\mu}_v)\cdot\nabla \phi(v,X^{\varepsilon,t,\mu}_v)\,dv+\textcolor{black}{M_s^{\varepsilon,t,\mu}},
\end{eqnarray*}
where
\begin{equation*}
M_s^{\varepsilon,t,\mu}=
\left\{
\begin{aligned}
&\int_t^s \nabla\phi(v,X_v^{\varepsilon,t,\mu}) \cdot dW_v,\ \text{if} \ \alpha=2,\\ 
&\int_t^s \int_{\R^d\backslash\{0\}} [\phi(v,X_{v^-}^{\varepsilon,t,\mu} +x) -\phi(v,X_{v^-}^{\varepsilon,t,\mu})] \tilde N(dv,dx), \ \text{if} \ \alpha \in (1,2),
\end{aligned}
\right.
\end{equation*}
\noindent
{\color{black}denoting by $W$, the standard $d$-dimensional Brownian motion and by $\tilde N $, the compensated Poisson measure}.

Averaging the above expression further yields  
\begin{equation}\label{PDE_EPS_VAR_BIS}
\int dx \ f(x)\brho^{\epsilon}_{t,\mu}(s,x)
=\int p^\alpha_{s-t}\star f(y)\,\mu(dy)+\int_t^s dv \int_{\R^d} dy\,  \{ \mathcal B_{\brho^{\varepsilon}_{t,\mu}}^{\varepsilon}(v,y) \brho^{\varepsilon}_{t,\mu}(v,y)\}\cdot \nabla p^\alpha_{s-v}\star f(y).
\end{equation}
By Fubini's theorem, the convolution with  $p^\alpha_{s-t}$ can be shifted to $\mu$ and $\int p^\alpha_{s-t}\star f(y)\,\mu(dy)$ rewrites as 
$\int  dx\ f(x) \,p^\alpha_{s-t}\star\,\mu(x)$. In the same way, observe that, since  $|\brho^{\varepsilon}_{t,\mu}(v,\cdot))|_{L^1}=1$ for all $v>t$, 
\begin{eqnarray*}
&&\int_t^s\,dv\int \,dy\int\,d\textcolor{black}{x} \ |f(\textcolor{black}{x})||\nabla p^\alpha_{s-v}(y-\textcolor{black}{x})||\mathcal B_{\brho^{\varepsilon}_{t,\mu}}^{\varepsilon}(v,y)| |\brho^{\varepsilon}_{t,\mu}(v,y)|\\
&\le & |f|_{L^\infty}\int_t^s\,dv \ |\nabla p^\alpha_{s-v}|_{L^1}|\mathcal B_{\brho^{\varepsilon}_{t,\mu}}^{\varepsilon}(v,\cdot)|_{L^\infty}| \brho^{\varepsilon}_{t,\mu}(v,\cdot)|_{L^1}=|f|_{L^\infty}\int_t^s\,dv|\nabla p^\alpha_{s-v}|_{L^1}|\mathcal B_{\brho^{\varepsilon}_{t,\mu}}^{\varepsilon}(v,\cdot)|_{L^\infty}. 
\end{eqnarray*}
Using \eqref{YOUNG} and again that $|\brho^{\varepsilon}_{t,\mu}(v,\cdot))|_{L^1}=1$,  it follows that for all $v$, 
\begin{eqnarray*}
|\mathcal B_{\brho^{\varepsilon}_{t,\mu}}^{\varepsilon}(v,\cdot)|_{L^\infty}=|(b^\varepsilon(v,\cdot)\star \brho^{\varepsilon}_{t,\mu}(v,\cdot)|_{L^\infty}
\le &  |b^\varepsilon|_{L^\infty(L^\infty)}.
\end{eqnarray*}
Therefore, applying the embedding $|\nabla p^\alpha_{s-v}|_{L^1}\le C|\nabla p^\alpha_{s-v}|_{B^0_{1,1}}$ (following \eqref{BesovEmbedding}) and next \eqref{SING_STABLE_HK} for $\gamma=0$, $\ell=1=m$,  
\begin{eqnarray*}
&&\int_t^s\,dv\int \,dy\int\,d\textcolor{black}{x} |f(\textcolor{black}{x})||\nabla p^\alpha_{s-v}(y-\textcolor{black}{x})||\mathcal B_{\brho^{\varepsilon}_{t,\mu}}^{\varepsilon}(v,y)|| \brho^{\varepsilon}_{t,\mu}(v,y)|\\
&\le & C  |f|_{L^\infty}|b^\varepsilon|_{L^\infty(L^\infty)}\int_t^s\,\frac{dv}{(s-v)^{\frac{1}{\alpha}}}<\infty,
\end{eqnarray*}
the finiteness being granted by the fact that $\alpha>1$. Consequently,  \eqref{PDE_EPS_VAR_BIS} rewrites as
\[
\int dx \ f(x)\brho^{\varepsilon}_{t,\mu}(s,x) 
=\int dx\, f(x)\Bigg[ p^\alpha_{s-t}\star \,\mu(x)-\int_t^s dv\, \Big(\nabla p^\alpha_{s-v}\star \{ \mathcal B_{\brho^{\varepsilon}_{t,\mu}}^{\varepsilon}(v,x) \brho^{\varepsilon}_{t,\mu}(v,x)\}\Big)\Bigg].
\]
By a density argument, we can extend \textcolor{black}{the previous computations to} the class of functions $f$ \textcolor{black}{that are} only bounded measurable functions and we eventually deduce that, for all $t\le s\le T$ and for a.e. $x$ in $\mathbb R^d$, 
\[
\brho^{\textcolor{black}{\varepsilon}}_{t,\mu}(s,x)= p^\alpha_{s-t}\star \,\mu(x)-\int_t^s dv\, \Big(\nabla p^\alpha_{s-v}\star \{ \mathcal B_{\brho^{\varepsilon}_{t,\mu}}^{\varepsilon}(v,x) \brho^{\varepsilon}_{t,\mu}(v,x)\}\Big).
\]
By continuity of $y\mapsto \brho^{\varepsilon}_{t,\mu}(s,y)$, the equality extends to all point $y$ of $\mathbb R^d.$
}

\label{ADDENDUM_TO_LEMME_STAB}
\textbf{Acknowledgements.} For the first and third authors \textcolor{black}{this work has partially been supported by  the Russian Science Foundation project (project ${\rm N}^\circ$20-11-20119)}. For the second author, the article was prepared within the framework of the Basic Research Program at HSE University. {\color{black}This joint work and its companion \cite{chau:jabi:meno:22-2} were both initiated in May 2021}. The first author also thanks the Centre Henri Lebesgue ANR-11-LABX-0020-01 for creating an attractive mathematical environment.

\end{document}